\theoremstyle{plain}
\newtheorem{theorem}{Theorem}[section]
\newtheorem*{theorem*}{Theorem}
\newtheorem{proposition}[theorem]{Proposition}
\newtheorem{lemma}[theorem]{Lemma}
\theoremstyle{definition}
\newtheorem{remark}[theorem]{Remark}
\newtheorem{example}[theorem]{Example}
\newcommand{\enm}[1]{\ensuremath{#1}}          %
\newcommand{\op}[1]{\operatorname{#1}}
\newcommand{\cal}[1]{\mathcal{#1}}
\newcommand{\ZZ}{\enm{\mathbb{Z}}}
\newcommand{\PP}{\enm{\mathbb{P}}}
\newcommand{\Aa}{\enm{\cal{A}}}
\newcommand{\Ee}{\enm{\cal{E}}}
\newcommand{\Ff}{\enm{\cal{F}}}
\newcommand{\Gg}{\enm{\cal{G}}}
\newcommand{\Hh}{\enm{\cal{H}}}
\newcommand{\Ii}{\enm{\cal{I}}}
\newcommand{\Ll}{\enm{\cal{L}}}
\newcommand{\Nn}{\enm{\cal{N}}}
\newcommand{\Oo}{\enm{\cal{O}}}
\newcommand{\Uu}{\enm{\cal{U}}}
\renewcommand{\phi}{\varphi}
\renewcommand{\theta}{\vartheta}
\renewcommand{\epsilon}{\varepsilon}
\newcommand{\Ext}{\op{Ext}}
\renewcommand{\to}[1][]{\xrightarrow{\ #1\ }}
\newcommand{\old}[1]{}
\begin{document}

\title[Globally generated vector bundles]{Globally generated vector bundles on $\PP^1 \times \PP^1 \times \PP^1$ with low first Chern classes}
\author{E. Ballico, S. Huh and F. Malaspina}
\address{Universit\`a di Trento, 38123 Povo (TN), Italy}
\email{edoardo.ballico@unitn.it}
\address{Sungkyunkwan University, Suwon 440-746, Korea}
\email{sukmoonh@skku.edu}
\address{Politecnico di Torino, Corso Duca degli Abruzzi 24, 10129 Torino, Italy}
\email{francesco.malaspina@polito.it}
\keywords{Segre variety, Vector bundles, Globally generated, Curves in projective spaces}
\thanks{The first and third authors are partially supported by MIUR and GNSAGA of INDAM (Italy). The second author is supported by Basic Science Research Program 2010-0009195 through NRF funded by MEST. The third author is supported by the framework of PRIN 2010/11 \lq Geometria delle variet\`a algebriche\rq, cofinanced by MIUR}
\subjclass[2010]{14J60; 14H50; 14M07}

\begin{abstract}
We classify globally generated vector bundles on $\PP^1 \times \PP^1 \times \PP^1$ with small first Chern class, i.e. $c_1= (a_1, a_2, a_3)$, $a_i \le 2$. Our main method is to investigate the associated smooth curves to globally generated vector bundles via the Hartshorne-Serre correspondence.
\end{abstract}

\maketitle

\section{Introduction}
Globally generated vector bundles on projective varieties play an important role in classical algebraic geometry. If they are non-trivial they must have strictly positive first Chern class. The classification of globally generated vector bundles with low first Chern class has been done over several rational varieties such as projective spaces \cite{am,SU} and quadric hypersurfaces \cite{BHM}. There is also a recent work over complete intersection Calabi-Yau threefolds and a Segre threefold $\PP^1 \times \PP^2$ by the authors \cite{BHM+++, BHM++++}.

There are three types of Segre varieties of dimension $3$: $\PP^3$, $\PP^1 \times \PP^2$ and $\PP^1 \times \PP^1 \times \PP^1$. In this paper we examine the similar problem of classification of globally generated vector bundles for the Segre variety $\PP^1 \times \PP^1 \times \PP^1$, the product of three projective lines. Note that the classification is already dealt in the case of $\PP^3$ and $\PP^1 \times \PP^2$ in \cite{am,BHM++++,SU}.

The Hartshorne-Serre correspondence states that the construction of vector bundles of rank $r$ at least $2$ on a smooth variety $X$ with dimension $3$ is closely related with the structure of curves in $X$ and it inspires the classification of vector bundles on smooth projective threefolds. There have been several works on the classification of {\it arithmetically Cohen-Macaulay} (ACM) bundles on the Segre threefold \cite{CFM0,CFM} and so it is sufficiently timely to classify the globally generated vector bundles on the Segre threefolds.

Our first main result is on rank $2$ bundles on $\PP^1 \times \PP^1 \times \PP^1$:

\begin{theorem}\label{thm1}
Let $\Ee$ be an indecomposable and globally generated vector bundle of rank $r$ at least $2$ on $X=\PP^1 \times \PP^1 \times \PP^1$ with the Chern classes $c_1=(a_1, a_2, a_3)$ and $c_2=(e_1, e_2, e_3)$. Let $s$ be the number of connected components of associated curve to $\Ee$ via the Hartshorne-Serre correspondence. If $c_1\in \{(1,1,1), (2,1,1), (2,2,1)\}$, the quadruple $(s;e_1, e_2, e_3)$ and the possible rank $r$ are as follows:
\begin{enumerate}
\item [$\mathbf{(r=2)}$] \begin{itemize}
\item [(i)] $c_1(\Ee)=(2,1,1):$ up to permutations on $(e_2, e_3)$
\begin{align*}
\hspace{40pt}\{(3;0,3,3), (2;0,2,2), (1;2,1,1), (1;1,1,1), (1;1,2,0)\};
\end{align*}
\item [(ii)]$c_1(\Ee)=(2,2,1):$ up to permutations on $(e_1, e_2)$
\begin{align*}
\hspace{40pt}\{(1;2,1,2),(1;3,1,2),(1;4,1,2), (2;2,0,4), (3;3,0,6)\}.
\end{align*}
\end{itemize}

\item [$\mathbf{(r\ge 3)}$] \begin{itemize}
\item[(iii)] $c_1(\Ee)=(1,1,1):$ $\{(1;1,1,1~; 3 \le r \le 7)\}$;
\item [(iv)]$c_1(\Ee)=(2,1,1):$ up to permutations on $(e_2, e_3)$
\begin{align*}
\{ &(3;0,3,3~; 3\le r \le 4), (1;2,2,2~;3\le r\le 5),\\
& (1;2,3,3~;3\le r \le 8), (1;2,4,4~;3\le r \le 11)\\
&(1;1,a,b~; 3\le r\le a+b)~|~3\le a+b\}.
\end{align*}
\end{itemize}
\end{enumerate}
Moreover there exist globally generated vector bundles in each case.
\end{theorem}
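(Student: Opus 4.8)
The plan is to translate the classification, via the Hartshorne--Serre correspondence, into the geometry of the associated curve $C\subset X$, and then to dispatch the three admissible values of $c_1$ by combining numerical restriction arguments with explicit constructions. \emph{Step 1 (the Hartshorne--Serre dictionary).} For $\Ee$ globally generated of rank $r\ge 2$ with $\det\Ee=\Oo_X(c_1)$, a general choice of $r-1$ sections drops rank in codimension exactly $2$ along a smooth curve $C$ — indecomposability of $\Ee$ guarantees the degeneracy locus is an honest curve, and a Kleiman--Bertini argument gives smoothness for a general choice — yielding
\[
0\to \Oo_X^{\,r-1}\to \Ee\to \Ii_C\otimes\Oo_X(c_1)\to 0 .
\]
From this one reads off that $c_2(\Ee)$, encoded as $(e_1,e_2,e_3)$, is the class of $C$, that $s=h^0(\Oo_C)$, and that $\Ee$ globally generated forces $\Ii_C\otimes\Oo_X(c_1)$ globally generated. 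Conversely $\Ee$ is reconstructed from $C$ together with an $(r-1)$-tuple in $V_C:=\Ext^1_X(\Ii_C\otimes\Oo_X(c_1),\Oo_X)$, and $\Ee$ is indecomposable iff these $r-1$ classes are linearly independent, so $r\le\dim_{\CC}V_C+1$. Serre duality on $X$ identifies $V_C\cong H^0\bigl(C,\omega_C\otimes\Oo_C(2-a_1,2-a_2,2-a_3)\bigr)$; for $r=2$ local freeness further forces $\omega_C\cong\Oo_C(a_1-2,a_2-2,a_3-2)$ (a subcanonicity condition pinning down the genus from the tridegree), and for $r\ge 3$ it forces $\omega_C\otimes\Oo_C(2-a_1,2-a_2,2-a_3)$ to be globally generated.

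\emph{Step 2 (numerical restrictions).} Restricting $\Ee$ to each of the three families of rulings $\ell\cong\PP^1$, and using that a globally generated bundle on $\PP^1$ is $\bigoplus\Oo_{\PP^1}(b_j)$ with $b_j\ge 0$ summing to the relevant $a_i$, together with the restriction of the displayed sequence, bounds the $e_i$ from above. Restricting $\Ee$ to each coordinate surface $\PP^1\times\PP^1\subset X$ and invoking the analogous statements there, plus adjunction for $C$ inside those surfaces, pins down the genus and the connectedness type of $C$. For $c_1\in\{(1,1,1),(2,1,1),(2,2,1)\}$ this reduces the admissible data $(s;e_1,e_2,e_3;r)$ to the entries of the list plus a bounded number of candidates still to be excluded.

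\emph{Step 3 (case analysis, constructions, exclusions).} For each surviving candidate I compute the genus $g$ of $C$ from the (sub)canonicity constraint of Step 1, verify $h^0\bigl(C,\omega_C\otimes\Oo_C(2-a_1,2-a_2,2-a_3)\bigr)=r-1$ (resp.\ $\ge r-1$ with equality at the top of the range), and then exhibit an actual curve of that tridegree, genus and number of components lying on $X$ with $\Ii_C\otimes\Oo_X(c_1)$ globally generated: rational normal curves and smooth conics for the connected genus-$0$ cases; elliptic normal curves realized as complete intersections of two members of $|\Oo_X(1,1,1)|$ for the genus-$1$ case (these account for $c_1=(1,1,1)$, where the resulting bundles are equivalently the kernels of general surjections involving $\Oo_X(1,1,1)$, giving $3\le r\le 7$); suitable smooth curves of genus $2$ and $3$ for the remaining entries of case (iv); and disjoint unions of $(1,1)$- or $(2,1)$-curves inside a quadric surface $\{pt\}\times\PP^1\times\PP^1$ for the cases with $s>1$. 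Feeding such a $C$ together with a generic family of $r-1$ independent extension classes into the Hartshorne--Serre construction produces globally generated bundles of each asserted rank, settling the ``moreover'' clause. The leftover candidates are excluded because either no smooth curve of that tridegree exists on $X$ (an incidence/dimension count inside the Segre threefold), or $\Ii_C\otimes\Oo_X(c_1)$ cannot be globally generated (failing generation along some ruling or coordinate surface), or the resulting extension necessarily splits off a line bundle.

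\emph{Expected main obstacle.} Steps 1 and 2 are largely mechanical and the rank bounds drop out of one $\Ext$ computation; the genuine difficulty is Step 3, namely deciding, for each near-miss tridegree, whether a smooth curve of that class with $\Ii_C\otimes\Oo_X(c_1)$ globally generated actually lies on $\PP^1\times\PP^1\times\PP^1$ — in effect a classification of low-degree space curves constrained to the Segre threefold — and then organizing the case analysis ``up to permutation'' so that the final list is provably exhaustive. A secondary subtlety is the small-rank end ($r=2$, and the borderline $r=3$), where indecomposability is not automatic and one must separately rule out splittings of $\Ee$ into a sum of line bundles, or of a line bundle and a rank-$2$ bundle.
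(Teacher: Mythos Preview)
Your high-level architecture matches the paper's: reduce via Hartshorne--Serre to the geometry of $C$, bound the numerical data, then run a case analysis with constructions and exclusions. But Step~2 as written has a gap. Restricting $\Ee$ to a ruling $\ell\cong\PP^1$ (a fiber of some $\pi_{ij}$) tells you only that $\Ee|_\ell$ splits with nonnegative summands of total degree $a_k$; since a generic ruling is disjoint from $C$ (the intersection number $[C]\cdot[\ell]$ vanishes), this gives no bound on the $e_i$. The paper's bounds come from two devices you do not invoke. First, whenever some $a_k=1$ --- which holds for all three $c_1$ under consideration --- the projection $\pi_{ij}|_C$ to the complementary $\PP^1\times\PP^1$ is a closed embedding: global generation of $\Ii_C(c_1)$ forces each fiber of $\pi_{ij}$ to meet $C$ in a scheme of length at most one. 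This single lemma does enormous work: for connected $C$ it forces $\min(e_i,e_j)\le 1$ (since smooth curves on $\PP^1\times\PP^1$ of bidegree $(p,q)$ with $p,q\ge 2$ have positive genus only if connected), and for $s\ge 2$ it forces $e_i=0$ or $e_j=0$ together with a rigid structure on the components. Second, the complete intersection $Y$ of two general members of $|\Ii_C(c_1)|$ is a connected curve of explicitly known multidegree and arithmetic genus containing $C$ with multiplicity one; writing $Y=C\cup D$ and using $\deg(D\cap C)\le\deg\Oo_D(c_1)$ (else $D$ lies in the base locus) is what kills most of the near-miss tridegrees.

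Your Step~3 is right in spirit but too optimistic about the exclusions: ``failing generation along some ruling'' is essentially never the mechanism. The paper's exclusions are almost all of the form ``the residual $D$ would be a configuration of lines forced into the base locus of $\Ii_C(c_1)$'' or ``$p_a(Y)$ comes out wrong given the component count''. For the indecomposability checks at low rank, the paper does not argue abstractly but instead produces, for each surviving $(s;e_1,e_2,e_3)$, an explicit section of a twist of $\Ee$ (via $h^0(\Oo_X(c_1-L))>h^0(\Oo_C(c_1-L))$ for a well-chosen $L$) and shows $\Ee$ sits in a concrete two-term extension of line bundles; indecomposability is then read off from whether that extension class can be nonzero.
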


Since bundles with $c_1=(a_1, a_2, 0)$ are pull-backs of bundles on $Q=\PP^1 \times \PP^1$ with the first Chern class $(a_1, a_2)$ by Proposition \ref{trivial}, so Theorem \ref{thm1} gives us a complete answer for the first Chern class $c_1<(2,2,2)$ with $a_1\ge a_2 \ge a_3 \ge 0$. Indeed we give a complete classification of vector bundles and associated curves with respect to $5$-tuple $(s;e_1, e_2, e_3; r)$ in most cases (see Proposition \ref{a1}, Proposition \ref{a2.1++}, Theorem \ref{prop4.2} and Theorem \ref{kkkk2}). We also give a partial classification of globally generated vector bundles of rank $2$ on $X$ with $c_1=(2,2,2)$ in Section $6$.

Let us here summarize the structure of this paper. In Section $2$, we introduce the definitions and main properties that will be used throughout the paper, mainly the Hartshorne-Serre correspondence that relates the globally generated vector bundles with smooth curves contained in the Segre threefolds. In Section $3$, we collect several basic notations and techniques that are used throughout the article, and then we give a complete classification of globally generated vector bundles of arbitrary rank with $c_1=(1,1,1)$ as a warm-up. Together with the results in Section $4 \sim 6$, we complete the classification of globally generated vector bundles of rank $2$ with $c_1\le (2,2,2)$ and also give the classification for arbitrary rank in the case of $c_1\le (2,1,1)$.


\section{Preliminaries}

Let $V_1, V_2, V_3$ be three $2$-dimensional vector spaces with the coordinates $[x_{1i}], [x_{2j}], [x_{3k}]$ respectively with $i,j,k\in \{1,2\}$. Let $X\cong \PP (V_1) \times \PP (V_2) \times \PP (V_3)$ and then it is embedded into $\PP^7\cong \PP(V_0)$ by the Segre map where $V_0=V_1 \otimes V_2 \otimes V_3$.

The intersection ring $A(X)$ is isomorphic to $A(\PP^1) \otimes A(\PP^1) \otimes A(\PP^1)$ and so we have
$$A(X) \cong \ZZ[t_1, t_2, t_3]/(t_1^2, t_2^2, t_3^2).$$
We may identify $A^1(X)\cong \ZZ^{\oplus 3}$ by $a_1t_1+a_2t_2+a_3t_3 \mapsto (a_1, a_2, a_3)$. Similarly we have $A^2(X) \cong \ZZ^{\oplus 3}$ by $e_1t_2t_3+e_2t_3t_1+e_3t_1t_2\mapsto (e_1, e_2, e_3)$ and $A^3(X) \cong \ZZ$ by $ct_1t_2t_3 \mapsto c$. Then $X$ is embedded into $\PP^7$ by the complete linear system $|\Oo_X(1,1,1)|$ as a subvariety of degree $6$ since $(1,1,1)^3=6$.

Here we introduce some basic maps for our later use.
\begin{itemize}
\item $\pi_i : X \to \PP^1$ is the natural projection to $i^{\mathrm{th}}$ factor;
\item $\pi_{ij}: X \to \PP^1 \times \PP^1$ is the natural projection to $(i,j)$-factor;
\item $\phi=\phi_W:X \to \PP^3$ is a linear projection to $\PP^3$ from a $3$-dimensional subspace $W\subset \PP^7$ with $W\cap X=\emptyset$.
\end{itemize}

For a curve $C\subset X$, write $C = C_1\sqcup \cdots \sqcup C_s$ with $s\ge 1$ and $C_1,\dots ,C_s$ the connected components of $C$. Set
$$e_1= \deg (\Oo _C(1,0,0))~,~e_2= \deg (\Oo _C(0,1,0))~,~e_3= \deg (\Oo _C(0,0,1))$$
and call $(e_1, e_2, e_3)$ the {\it multidegree} of $C$. For each $i=1,\dots ,s$, we also set
$$e[i]_1:= \deg (\Oo _{C_i}(1,0,0))~,~e[i]_2:= \deg (\Oo _{C_i}(0,1,0))~,~e[i]_3:= \deg (\Oo _{C_i}(0,0,1)).$$
We also set $\deg (C) := C \cdot \Oo _X(1)$ and call it the {\it degree} of $C$. Then $\deg (C)$ is the degree of $C$ as a curve in $\PP^7$ and it is the sum of the factors of multidegree of $C$.

For a coherent sheaf $\Ee$ with the second Chern class $c_2(\Ee )=e_1t_2t_3+e_2t_3t_1+e_3t_1t_2$, we say that $c_2(\Ee )=(e_1,e_2,e_3)$ or that $\Ee$ has multidegree $(e_1,e_2,e_3)$. Let $\Ee$ be a globally generated vector bundle of rank $r$ on $X$ with the first Chern class $c_1(\Ee)=(a_1, a_2, a_3)$. Then it fits into the exact sequence
\begin{equation}\label{equ+}
0\to \Oo_X^{\oplus (r-1)} \to \Ee \to \Ii_C (a_1, a_2, a_3) \to 0,
\end{equation}
where $C$ is a smooth subscheme of dimension $1$ on $X$ by \cite[Section $2$. $\mathbf{G}$]{man}. If $C$ is empty, then $\Ee$ is isomorphic to $\Oo_X^{\oplus (r-1)} \oplus \Oo_X(a_1, a_2, a_3)$.

\begin{proposition}\cite{sierra}\label{prop1}
If $\Ee$ is a globally generated vector bundle of rank $r$ on $X$ with the first Chern class $c_1$ such that $H^0(\Ee(-c_1))\not= 0$, then we have $\Ee\simeq \Oo_X^{\oplus (r-1)}\oplus \Oo_X(c_1)$.
\end{proposition}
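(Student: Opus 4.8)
The plan is to exploit the exact sequence \eqref{equ+} together with the hypothesis $H^0(\Ee(-c_1)) \neq 0$ to force the curve $C$ to be empty, after which the statement follows from the remark immediately after \eqref{equ+}. A nonzero section $t \in H^0(\Ee(-c_1))$ gives a nonzero map $\Oo_X \to \Ee(-c_1)$, equivalently a nonzero map $\Oo_X(c_1) \to \Ee$. First I would twist \eqref{equ+} by $\Oo_X(-c_1)$ to get
\begin{equation*}
0 \to \Oo_X(-c_1)^{\oplus(r-1)} \to \Ee(-c_1) \to \Ii_C \to 0,
\end{equation*}
and observe that since each $a_i \geq 0$ (as $\Ee$ is globally generated and nontrivial, or trivially if some $a_i=0$) we have $H^0(\Oo_X(-c_1)^{\oplus(r-1)}) = 0$ unless $c_1 = 0$; excluding that degenerate case, the section $t$ maps to a nonzero element of $H^0(\Ii_C) \subseteq H^0(\Oo_X)$, hence to a nonzero constant. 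This means the composite $\Oo_X(c_1) \to \Ee \to \Ii_C(c_1) \hookrightarrow \Oo_X(c_1)$ is an isomorphism (multiplication by a nonzero scalar), so $\Ii_C(c_1) = \Oo_X(c_1)$, i.e.\ $C = \emptyset$.

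Once $C = \emptyset$, the sequence \eqref{equ+} becomes $0 \to \Oo_X^{\oplus(r-1)} \to \Ee \to \Oo_X(c_1) \to 0$, and since $\Oo_X(c_1)$ is locally free this sequence splits (the obstruction lies in $\Ext^1(\Oo_X(c_1), \Oo_X^{\oplus(r-1)}) = H^1(\Oo_X(-c_1))^{\oplus(r-1)}$, which vanishes because $-c_1 \leq 0$). Therefore $\Ee \simeq \Oo_X^{\oplus(r-1)} \oplus \Oo_X(c_1)$, as claimed. One should also dispose of the case $c_1 = 0$ separately: then $\Ee$ globally generated with $c_1 = 0$ and $H^0(\Ee) \neq 0$ forces $\Ee \simeq \Oo_X^{\oplus r}$ by taking a section and splitting off a trivial summand inductively (or by noting $\det \Ee = \Oo_X$ and $\Ee$ globally generated of trivial determinant is trivial), which matches the desired conclusion with $c_1 = 0$.

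I expect the only genuinely delicate point to be justifying that the map $\Oo_X(c_1) \to \Ee$ induced by $t$ composes with the surjection $\Ee \to \Ii_C(c_1)$ to a nonzero map — that is, that $t$ does not factor through the subsheaf $\Oo_X^{\oplus(r-1)}$. This is exactly where the hypothesis that the section lives in $H^0(\Ee(-c_1))$ rather than merely in $H^0(\Ee)$ does its work: a factorization through $\Oo_X^{\oplus(r-1)}$ would produce a nonzero section of $\Oo_X(-c_1)^{\oplus(r-1)}$, impossible for $c_1 \neq 0$ with all $a_i \geq 0$. The vanishing statements $H^i(\Oo_X(-c_1)) = 0$ for $i = 0, 1$ when $c_1 \neq 0$, $c_1 \geq 0$ are routine from the Künneth formula on $\PP^1 \times \PP^1 \times \PP^1$. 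Since the paper attributes this proposition to \cite{sierra}, the argument above is essentially the extraction of its content in the present notation, and no new obstacle arises.
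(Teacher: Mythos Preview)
The paper does not give its own proof of this proposition; it is stated with a citation to \cite{sierra} and no proof environment follows. So there is nothing in the paper to compare your argument against, and your self-contained argument is welcome.

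Your proof is essentially correct, but one step is wrong as written. You claim that the extension $0 \to \Oo_X^{\oplus(r-1)} \to \Ee \to \Oo_X(c_1) \to 0$ splits because $H^1(\Oo_X(-c_1)) = 0$ whenever $-c_1 \le 0$. This vanishing is false in general on $X = \PP^1\times\PP^1\times\PP^1$: for instance $c_1 = (2,0,0)$ gives $h^1(\Oo_X(-2,0,0)) = h^1(\Oo_{\PP^1}(-2)) = 1$ by K\"unneth. The vanishing does hold when all $a_i > 0$, but the proposition is stated (and is true) for arbitrary $c_1 \ge 0$.

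The good news is that you do not need the $\Ext^1$ vanishing at all. You already observed in your first paragraph that the composite
\[
\Oo_X(c_1) \xrightarrow{\ t\ } \Ee \longrightarrow \Ii_C(c_1) = \Oo_X(c_1)
\]
is multiplication by a nonzero scalar, hence an isomorphism. That composite \emph{is} a splitting of the surjection $\Ee \to \Oo_X(c_1)$, so $\Ee \cong \Oo_X^{\oplus(r-1)} \oplus \Oo_X(c_1)$ follows immediately. Simply delete the sentence invoking $\Ext^1(\Oo_X(c_1),\Oo_X^{\oplus(r-1)})$ and replace it by the remark that $t$ itself provides the required section of the quotient map; then your argument is complete and correct.
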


In particular, in the classification of globally generated vector bundles on $X$, we may assume that $C$ is not empty and $H^0(\Ee(-c_1))=0$.

We recall from \cite{BHM++++} the following elementary observations.

\begin{proposition}\cite[Proposition 2.3]{BHM++++}\label{trivial}
For $c_1=(a,b,0)\in \ZZ_{\ge0}^{\oplus 3}$, there is a bijection $\Ee \mapsto \pi_{12}^*(\Ee)$ between the set of spanned vector bundles $\Ee$ of rank $r$ on $\PP^1 \times \PP^1$ with $c_1(\Ee)=(a,b)$ and the spanned vector bundles of rank $r$ on $X$ with the first Chern class $c_1$. Moreover we have
\begin{enumerate}
\item $h^i (\PP^1 \times \PP^1, \Ee) = h^i (X, \pi_{12}^* (\Ee))$ for all $i\ge 0$;
\item for any spanned bundle $\Gg$ on $X$ with $c_1(\Gg)=(a,b,0)$, we have $\Gg \cong \pi_{12}^*({\pi_3}_*(\Gg))$ with ${\pi_{12}}_* (\Gg)$ a spanned bundle on $\PP^1 \times \PP^1$.
\end{enumerate}
\end{proposition}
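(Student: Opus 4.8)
The plan is to reduce everything to the behaviour of a spanned bundle on the fibres of the $\PP^1$-bundle $\pi_{12}\colon X\to\PP^1\times\PP^1$ and then to descend along $\pi_{12}$ by the theorem on cohomology and base change; the inverse of the assignment $\Ee\mapsto\pi_{12}^*(\Ee)$ will turn out to be $\Gg\mapsto(\pi_{12})_*(\Gg)$. First I would fix a spanned vector bundle $\Gg$ of rank $r$ on $X$ with $c_1(\Gg)=(a,b,0)$ and study its restriction to a fibre $F=\pi_{12}^{-1}(p)\cong\PP^1$ over $p\in\PP^1\times\PP^1$. Because $F$ is a fibre of $\pi_{12}$, the classes $t_1$ and $t_2$ restrict to $0$ on $F$, so $c_1(\Gg|_F)=0$ in $A^1(F)\cong\ZZ$; writing $\Gg|_F\cong\bigoplus_{i=1}^r\Oo_{\PP^1}(d_i)$, global generation of $\Gg$ (hence of $\Gg|_F$) forces every $d_i\ge 0$, while $\sum_{i=1}^r d_i=\deg c_1(\Gg|_F)=0$, so $\Gg|_F\cong\Oo_F^{\oplus r}$ for every $p$. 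In particular $h^0(F,\Gg|_F)=r$ is independent of $p$ and $h^1(F,\Gg|_F)=0$.

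Next I would apply the theorem on cohomology and base change to the flat proper morphism $\pi_{12}$ and the (flat) vector bundle $\Gg$: constancy of $h^0$ on the fibres gives that $\Ff:=(\pi_{12})_*(\Gg)$ is locally free of rank $r$ on $\PP^1\times\PP^1$, that $R^j(\pi_{12})_*(\Gg)=0$ for $j\ge 1$, and that the adjunction morphism $\pi_{12}^*(\Ff)\to\Gg$ restricts on every fibre $F$ to the evaluation map $H^0(F,\Gg|_F)\otimes\Oo_F\to\Gg|_F$, which is an isomorphism because $\Gg|_F$ is trivial; as this makes $\pi_{12}^*(\Ff)\to\Gg$ an isomorphism at every point of $X$ between locally free sheaves of the same rank, it is an isomorphism of sheaves. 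Since $\pi_{12}$ has connected $\PP^1$-fibres we have $(\pi_{12})_*\Oo_X=\Oo_{\PP^1\times\PP^1}$ and $R^j(\pi_{12})_*\Oo_X=0$ for $j\ge 1$, so by the projection formula and the Leray spectral sequence $h^i(X,\pi_{12}^*(\Ee))=h^i(\PP^1\times\PP^1,\Ee)$ for all $i$ (assertion (1)) and $(\pi_{12})_*\pi_{12}^*(\Ee)\cong\Ee$. A pulled-back generating set of sections still generates, so $\pi_{12}^*(\Ee)$ is spanned whenever $\Ee$ is; conversely, if $\pi_{12}^*(\Ff)$ is spanned then, using $H^0(X,\pi_{12}^*\Ff)=H^0(\PP^1\times\PP^1,\Ff)$ and the faithful flatness of $\pi_{12}$, the cokernel of the evaluation map of $\Ff$ pulls back to $0$ and hence vanishes, so $\Ff$ is spanned; in particular $(\pi_{12})_*(\Gg)$ is spanned, which together with $\Gg\cong\pi_{12}^*((\pi_{12})_*\Gg)$ is assertion (2). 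Finally $\pi_{12}^*$ is injective on $A^1$, so $c_1(\pi_{12}^*\Ee)=(a,b,0)$ if and only if $c_1(\Ee)=(a,b)$, and ranks are preserved; combined with $(\pi_{12})_*\pi_{12}^*\Ee\cong\Ee$ and $\pi_{12}^*(\pi_{12})_*\Gg\cong\Gg$, this shows $\Ee\mapsto\pi_{12}^*(\Ee)$ is a bijection onto the spanned bundles of rank $r$ on $X$ with $c_1=(a,b,0)$, with inverse $\Gg\mapsto(\pi_{12})_*(\Gg)$.

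The one step needing genuine care is the passage from fibrewise triviality of $\Gg$ to the honest isomorphism $\pi_{12}^*((\pi_{12})_*\Gg)\cong\Gg$; I expect to dispatch it with the standard base-change package (constant $h^0$ on the fibres $\Rightarrow$ $(\pi_{12})_*\Gg$ locally free with formation commuting with base change $\Rightarrow$ the adjunction map is a pointwise, hence global, isomorphism). The remaining ingredients---the vanishing of $R^j(\pi_{12})_*\Oo_X$ for $j\ge 1$, the Leray computation for (1), the two directions of global generation, and the bookkeeping of Chern classes and ranks---are routine.
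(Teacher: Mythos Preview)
The paper does not supply a proof of this proposition: it is quoted from \cite[Proposition 2.3]{BHM++++} and stated without argument. So there is no in-paper proof to compare against.

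Your argument is correct and is exactly the standard one. The key observation---that spannedness plus $c_1(\Gg)=(a,b,0)$ forces $\Gg|_F\cong\Oo_F^{\oplus r}$ on every fibre $F$ of $\pi_{12}$---is right, and from there Grauert/base-change gives local freeness of $(\pi_{12})_*\Gg$, vanishing of higher direct images, and the fibrewise (hence global) isomorphism $\pi_{12}^*(\pi_{12})_*\Gg\cong\Gg$. The Leray/projection-formula computation for (1), the faithfully-flat descent of spannedness, and the Chern-class bookkeeping are routine as you say. One cosmetic remark: the printed statement of part (2) has a typo (${\pi_3}_*$ should be ${\pi_{12}}_*$), and you have silently corrected it; that is the right reading.
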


Notice that we have $c_3({\pi_3} ^\ast (\Ee )) =0$ even when $r\ge 3$. By Proposition \ref{trivial} it is now sufficient to check the case of $c_1=(a_1,a_2,a_3)$ with $a_i>0$ for all $i$.

\begin{remark}\label{24nov}
Assume that $C$ is a curve with $\omega _C \cong \Oo _C(c_1-c_1(X))$. If $C$ has $s$ connected components, then we have $h^0(\omega _C(c_1(X)-c_1)) =s$ and so the Hartshorne-Serre correspondence shows that $C$ gives a vector bundle $\Ee$ with $c_1(\Ee)=c_1$ of rank $r$ with no trivial factor if and only if $2 \le r \le s+1$.
\end{remark}

\begin{remark}\label{uuu00}
On a smooth threefold $X$, let us fix a very ample line bundle $\Ll$ and a smooth curve $C\subset X$. Assume that $\Ii _C\otimes \Ll$ is globally generated and take two general divisors $M_1,M_2\in |\Ii _C\otimes \Ll|$. Set $Y:= M_1\cap M_2$. Since $\Ll$ is very ample, each connected component of $C$ appears with multiplicity $1$
in the locally complete intersection curve $Y$ By the Bertini theorem we have $Y =C\cup D$ with either
$D=\emptyset$ or $D$ a reduced curve containing no component of $C$ and smooth outside $C\cap D$.
\end{remark}

\begin{example}\label{ooo0}
On a smooth and connected projective threefold $X$, let us fix a globally generated line bundle $\Ll$ with $h^0(\Ll )\ge 2$ and set $r_0:= h^0(\Ll )-1$. Since $\Ll$ is globally generated, so the evaluation map $\psi: H^0(\Ll)\otimes \Oo _X\to \Ll$ is surjective and $\ker (\psi )$ is a vector bundle of rank $r_0$ on $X$. The vector bundle $\Ff := \ker (\psi )^\vee$ fits in an exact sequence
\begin{equation}\label{oo1}
0 \to \Ll ^\vee \to \Oo _X^{\oplus (r_0+1)} \to \Ff \to 0
\end{equation}
and it determines the Chern classes of $\Ff$. If $h^1(\Ll ^\vee )=0$, e.g. $\Ll$ is ample, then the sequence (\ref{oo1}) gives $h^0(\Ff ) =r_0+1$. If $Y\subset X$ is the complete intersection of two elements of $|\Ll|$, then we get $Y$ as the dependency locus of a certain $(r_0-1)$-dimensional linear subspace of $H^0(\Ff)$.
\end{example}

\begin{example}\label{ooo1}
Let us apply the construction in Example \ref{ooo0} to $X=\PP^1 \times \PP^1 \times \PP^1$.
If $\Ll := \Oo _X(a_1,a_2,a_3)$ with $a_i \ge 0$ for all $i$ and $a_1+a_2+a_3>0$, then we have $ r_0+1:=h^0(\Ll)=(a_1+1)(a_2+1)(a_3+1)$. Let $Y$ be the complete intersection of two elements of $|\Oo _X(a_1,a_2,a_3)|$. Since $(a_1t_1+a_2t_2+a_3t_3)^2= 2a_1a_2t_1t_2+2a_1a_3t_1t_3+2a_2a_3t_2t_3$, $Y$ has multidegree $(2a_2a_3,2a_1a_3,2a_1a_2)$. By the adjunction formula, we also have $\omega _Y \cong \Oo _Y(2a_1-2, 2a_2-2, 2a_3-2)$ and so $\omega _Y(2-a_1,2-a_2,2-a_3) = \Oo _Y(a_1,a_2,a_3)$. In particular $\omega _Y(2-a_1,2-a_2,2-a_3)$ is spanned and we have $h^0(\omega _Y(2-a_1,2-a_2,2-a_3))= r_0-1$.

Now we assume that $Y$ is smooth, e.g. take as $Y$ the complete intersection of two general elements of $|\Oo _X(a_1,a_2,a_3)|$. Since $\dim (Y)=1$, and $\omega _Y(2-a_1,2-a_2,2-a_3)$ is a spanned line bundle, so $\omega _Y(2-a_1,2-a_2,2-a_3)$ is spanned by a general $m$-dimensional linear subspace of $H^0(\omega _Y(2-a_1,2-a_2,2-a_3))$ for every integer $m$ with $2\le m \le r_0-1$. The Hartshorne-Serre correspondence gives the existence of a globally generated vector bundle $\Ee$ with $Y$ as a dependency locus and no trivial factor (see \cite[Lemma 4.1]{BHM++}) for all ranks $r$ with $3 \le r \le r_0$.

If $a_i> 0$ for all $i$, i.e. $\Oo _X(a_1,a_2,a_3)$ is ample, then a standard exact sequence and a vanishing theorem give that $h^0(\Oo _Y)=1$ and in particular each $Y$ is connected. The same proof works if  and only if at least two among $a_1, a_2, a_3$ are positive or if $(a_1, a_2, a_3)=(1,0,0)$. In the case $r = r_0$, Example \ref{ooo0} shows that there is a unique bundle $\Ee$ with rank $r_0$ and associated to some complete intersection curve. Since $\Ee$ is unique, we have $g^\ast (\Ee )\cong \Ee$ for each $g\in \mathrm{Aut} ^0(X) = \mathrm{Aut} (\PP^1)\times \mathrm{Aut} (\PP^1)\times \mathrm{Aut} (\PP^1)$, i.e. $\Ee$ is homogeneous for the action of $\mathrm{Aut} ^0(X)$. If $a_1=a_2=a_3$, then $\Ee$ is homogeneous for the action of $\mathrm{Aut}(X)$.
 \end{example}

\begin{remark}\label{rem3.3.3}
Let $Y \subset X$ be the complete intersection of two divisors of type $|\Oo _X(a_1, a_2, a_3)|$ containing $C$. We have $\deg (Y)=2(a_1a_2+a_2a_3+a_3a_1)$, where $\deg(Y)$ is the degree of $Y$ as a curve in $\PP^7$. By the Bertini theorem $Y$ is a curve containing $C$ and smooth outside $C$. Note that $C$ occurs with multiplicity one in $Y$, because $\Ii _C(a_1, a_2, a_3)$ is spanned and so, affixing $p_i\in C_i$, $1\le i \le s$, we may find a divisor $T\in |\Ii _C(a_1, a_2, a_3)|$ not containing the tangent line of $C_i$ at $p_i$. $Y$ is also connected since we have $h^0(\Oo_Y)=1$ by a vanishing theorem. The adjunction formula gives $\omega _Y \cong \Oo _Y(2a_1-2,2a_2-2,2a_3-2 )$ and so we have
\begin{align*}
2p_a(Y) -2& = (a_1t_1+a_2t_2+a_3t_3)^2((2a_1-2)t_1+(2a_2-2)t_2+(2a_3-2)t_3) \\
&= 12a_1a_2a_3-4(a_1a_2+a_2a_3+a_3a_1).
\end{align*}
Hence we have $p_a(Y)=6a_1a_2a_3-2(a_1a_2+a_2a_3+a_3a_1)+1$.
\end{remark}

\begin{remark}\label{tr1}
Let $D$ be an integral projective curve. By the universal property of $\PP^1$ there is a bijection between the morphisms $u: D\to X$ and the triples $(u_1,u_2,u_3)$ with $u_i: D \to \PP^1$ any morphism. The set $u(D)$ is contained in a $2$-dimensional factor of $X$ if and only if one of the $u_1,u_2,u_3$ is constant. We say that a constant map has degree zero. With this convention to any $u$ we may associate a triple $(\deg (u_1),\deg (u_2),\deg (u_3))\in \ZZ_{\ge 0}^{\oplus 3}$ and $u(D)$ is a curve if and only if $(\deg (u_1),\deg (u_2),\deg (u_3)) \ne (0,0,0)$. Now assume that $u$ is birational onto its image. With this assumption for all $(a_1,a_2,a_3)\in \ZZ^3$ we have $u(D)\cdot \Oo _X(a_1,a_2,a_3) =a_1\deg (u_1)+a_2\deg (u_2)+a_3\deg (u_3)$. In particular the degree of the curve $u(D)$ is $\deg (u_1)+\deg (u_2)+\deg (u_3)$.
\end{remark}

Now let us collect several observation concerning the case $c_1=(a,b,1)$ with $a,b>0$.
\begin{lemma}\label{f1}
If $\Ii_C(a,b,1)$ is globally generated, then the map ${\pi _{12}}_{\vert_C}: C \to \PP^1 \times \PP^1$ is an embedding.
\end{lemma}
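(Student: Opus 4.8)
The plan is to exploit the exact sequence defining global generation of $\Ii_C(a,b,1)$ together with the very ampleness of $\Oo_{\PP^1\times\PP^1}(a,b)$ on $\PP^1\times\PP^1$. The key point is that $\Oo_X(a,b,1)=\pi_{12}^*\Oo_{\PP^1\times\PP^1}(a,b)\otimes\pi_3^*\Oo_{\PP^1}(1)$, so the restriction map ${\pi_{12}}|_C$ factors through a map whose behaviour is controlled by the last factor. First I would observe that, since $\Ii_C(a,b,1)$ is globally generated and $a,b\ge 1$, the bundle $C$ must in particular be mapped to a curve rather than collapsed: indeed global generation of $\Ii_C(a,b,1)$ forces, for each pair of distinct points (or a tangent vector) of $X$, a section of $\Oo_X(a,b,1)$ vanishing on $C$ but separating them, and the only way to separate points lying in the same fibre of $\pi_{12}$ is via the degree-$1$ factor on $\PP^1$, which separates only two such points at a time.

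The main step is to show that ${\pi_{12}}|_C$ is injective on points and on tangent vectors. Suppose $p,q\in C$ with $\pi_{12}(p)=\pi_{12}(q)=:x\in\PP^1\times\PP^1$ and $p\ne q$; then both lie in the fibre $\pi_{12}^{-1}(x)\cong\PP^1$, and $p,q$ together with any third point $p'\in C$ (or the structure needed to handle $s$ components) would have to be separated by sections of $\Oo_X(a,b,1)$ vanishing on $C$. A section of $\Oo_X(a,b,1)$ vanishing on $C$, restricted to the line $\pi_{12}^{-1}(x)$, is a section of $\Oo_{\PP^1}(1)$, hence has at most one zero on that line unless it vanishes identically there. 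If it vanishes identically on $\pi_{12}^{-1}(x)$ for every section in $H^0(\Ii_C(a,b,1))$, then $\Ii_C(a,b,1)$ fails to be globally generated at $p$ and $q$ (nothing separates them); if some section does not vanish identically on the line, it has a single zero there, so it cannot vanish at both $p$ and $q$, again contradicting that every section in $H^0(\Ii_C(a,b,1))$ vanishes on $C\ni p,q$. The same argument with an infinitely near point handles the tangent directions: a tangent vector of $C$ at $p$ that is killed by $d({\pi_{12}}|_C)$ is tangent to the fibre $\pi_{12}^{-1}(\pi_{12}(p))$, and the restriction to that $\PP^1$ of any section of $\Oo_X(a,b,1)$ vanishing at $p$ to order $\ge 1$ along $C$ must vanish there to order $\ge 2$, forcing it to vanish identically on the fibre, so $\Ii_C(a,b,1)$ is not globally generated at that jet.

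I would conclude that ${\pi_{12}}|_C$ is an injective immersion, hence, since $C$ is a proper smooth curve, a closed embedding onto its image in $\PP^1\times\PP^1$. The main obstacle I anticipate is the bookkeeping of the case distinctions---in particular handling points $p,q$ on the \emph{same} connected component versus different components of $C$, and making sure that ``$\Ii_C(a,b,1)$ globally generated at $p$'' is used correctly when $p$ is not an isolated structure but a point of a positive-dimensional scheme: one should phrase global generation as surjectivity of $H^0(\Ii_C(a,b,1))\to\Ii_C(a,b,1)\otimes k(p)$ and of the length-two version for the jets, and then restrict to the fibre line carefully. Apart from that, the argument is essentially the elementary remark that a degree-one divisor on $\PP^1$ has at most one zero, combined with the factorization of $\Oo_X(a,b,1)$ through $\pi_{12}$.
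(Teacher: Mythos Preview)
Your argument for the generic case is essentially the paper's: if the fibre $J=\pi_{12}^{-1}(x)$ meets $C$ in a finite scheme, then every section of $\Ii_C(a,b,1)$ restricts to a section of $\Oo_J(a,b,1)\cong\Oo_{\PP^1}(1)$ vanishing along $J\cap C$; if $\deg(J\cap C)\ge 2$ this restriction is identically zero, and then at any point of $J\setminus C$ the stalk of $\Ii_C(a,b,1)$ equals $\Oo_X(a,b,1)$ while every global section vanishes there, contradicting global generation. That is exactly the paper's sentence ``$\deg(J\cap C)\le 1$''.

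The genuine gap is the case $J\subset C$, i.e.\ an entire fibre of $\pi_{12}$ is a component of $C$. Your sentence ``$\Ii_C(a,b,1)$ fails to be globally generated at $p$ and $q$ (nothing separates them)'' is not valid here: global generation of an ideal sheaf at $p\in C$ means surjectivity onto $(\Ii_C/\mm_p\Ii_C)\otimes\Oo_X(a,b,1)$, i.e.\ that global sections generate the ideal locally---it has nothing to do with separating points of $C$, on which every section of $\Ii_C$ vanishes by definition. In fact this case furnishes a counterexample to the lemma read literally: for $C=J$ a single fibre, $\Ii_J$ is generated by one section of $\Oo_X(1,0,0)$ and one of $\Oo_X(0,1,0)$, so $\Ii_J(a,b,1)$ is globally generated for all $a,b\ge 1$, yet $\pi_{12}|_J$ collapses $J$ to a point.

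The paper excludes $J\subset C$ by using the standing hypothesis of the section (implicit in the statement): $C$ is the dependency locus of a globally generated bundle with $c_1=(a,b,1)$, so by the Hartshorne--Serre correspondence $\omega_C(2-a,2-b,1)$ is globally generated. On a fibre $J$ this restricts to $\omega_J\otimes\Oo_{\PP^1}(1)\cong\Oo_{\PP^1}(-1)$, which has no sections, so no component of $C$ can be such a fibre. You must invoke this extra hypothesis; it cannot be extracted from global generation of $\Ii_C(a,b,1)$ alone.
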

\begin{proof}
For a point $p\in \PP^1\times \PP^1$, set $J: = \pi _{12}^{-1} ( p)$. Assume for the moment that $J$ is a connected component of $C$. Since $\Oo _J(2-a,2-b,-1)$ has degree $-1$ and $\omega _J$ has degree $-2$, so $\omega _J(2-a,2-b,-1)$ is not spanned and in particular $J$ is not a component of $C$. Since $\Ii _C(a,b,1)$ is globally generated and $\deg (\Oo _J(a,b,1)) =1$, so we have $\deg (J \cap C)\le 1$ and the assertion.
\end{proof}

\begin{remark}\label{f3}$ $
\begin{enumerate}
\item If $Y$ is the complete intersection of two general elements of $\Ii _C(a,b,1)$, the curve $C$ is contained in $Y$, and in particular we have $e_1\le 2b$, $e_2\le 2a$ and $e_3\le 2ab$.
\item Assume $s\ge 2$. Since the map ${\pi _{12}}_{\vert_C}$ is an embedding, there is an integer $j\in \{1,2\}$ such that $e[i]_j =0$ and $e[i]_{3-j}=1$ for all $i$. Let us assume $j=1$ without loss of generality. Each $C_i$ is smooth and rational and so we have
$\omega _{C_i} \cong \Oo _{C_i}(a-2,b-2,-1)$ (resp. $\omega _{C_i}(2-a,2-b,1)$ is spanned but not trivial) if and only if $b-2-e[i]_3 = -2$, i.e. $e[i]_3 =b$ (resp. $e[i]_3>b$).
\end{enumerate}
 \end{remark}


 \section{Warm-up and Case of $c_1=(1,1,1)$}

Let us deal with the globally generated vector bundles with $c_1=(1,1,1)$. In this case $Y$ has multidegree $(2,2,2)$ and $p_a(Y) =1$. Since $Y = C\cup D$ is connected,
we have $p_a(C_i)=0$ for all $i$, unless $C=Y$. Since $\omega _C(1,1,1)$ is globally generated, no component of $C$ is a line.

\begin{proposition}\label{a1}
Let $\Ee$ be a globally generated vector bundle of rank $2$ on $X$ with $c_1=(1,1,1)$ and no trivial factor. Then its associated curve $C$ is a smooth conic and $\Ee$ is isomorphic to $\Oo_X(1,0,0)\oplus \Oo_X(0,1,1)$, up to permutation of factors.
\end{proposition}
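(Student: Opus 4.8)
The plan is to run the Hartshorne--Serre machinery of Section~2 and to pin down the curve $C$ completely. I would start from the exact sequence (\ref{equ+}) with $r=2$, so that
$0\to\Oo_X\to\Ee\to\Ii_C(1,1,1)\to 0$
with $C$ a smooth curve which, by the reductions following Proposition~\ref{prop1}, may be taken non-empty (an empty $C$ gives $\Ee\cong\Oo_X\oplus\Oo_X(1,1,1)$, which has a trivial factor). Moreover $\Ii_C(1,1,1)$ is globally generated, being a quotient of the globally generated $\Ee$. By Hartshorne--Serre adjunction $\omega_C\cong\Oo_C(c_1-c_1(X))=\Oo_C(-1,-1,-1)$, which has negative degree on every connected component $C_i$; hence each $C_i\cong\PP^1$ and $\deg_{\Oo_X(1,1,1)}(C_i)=2$. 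A smooth curve of multidegree a permutation of $(2,0,0)$ would lie in a fibre $\cong\PP^1$ of one of the projections $\pi_{jk}$ and would therefore have multidegree a permutation of $(1,0,0)$; so the multidegree of each $C_i$ is a permutation of $(1,1,0)$, i.e. every component of $C$ is a smooth conic.

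Next I would show that $C$ is connected. Since $\Ii_C(1,1,1)$ is globally generated, Lemma~\ref{f1}, applied to each of the three pairs of factors, shows that $\pi_{jk}\vert_C\colon C\to\PP^1\times\PP^1$ is an embedding for every $\{j,k\}$. Suppose $C$ has $s\ge 2$ components and pick two of them, $C_1,C_2$; after permuting factors assume $C_1$ has multidegree $(1,1,0)$. For each $\{j,k\}$ the curves $\pi_{jk}(C_1)$ and $\pi_{jk}(C_2)$ are disjoint smooth curves on $\PP^1\times\PP^1$, so their intersection number $e[1]_je[2]_k+e[1]_ke[2]_j$ vanishes; taking $\{j,k\}=\{1,2\}$ and $\{j,k\}=\{1,3\}$ forces $e[2]_1=e[2]_2=e[2]_3=0$, contradicting $\sum_j e[2]_j=2$. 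Hence $s=1$: $C$ is a single smooth conic, of multidegree a permutation of $(0,1,1)$.

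It then remains to identify $\Ee$, and here I would argue by uniqueness of the extension. Since $H^i(\Oo_X(-1,-1,-1))=0$ for all $i$ (each tensor factor $\Oo_{\PP^1}(-1)$ being acyclic), applying $\Hom(-,\Oo_X(-1,-1,-1))$ to $0\to\Ii_C\to\Oo_X\to\Oo_C\to 0$ gives $\Ext^1(\Ii_C(1,1,1),\Oo_X)\cong\Ext^2(\Oo_C,\Oo_X(-1,-1,-1))$. As $C$ is a local complete intersection of codimension $2$, the only non-zero local Ext sheaf is $\mathcal{E}xt^2(\Oo_C,\Oo_X)=\det N_{C/X}=\Oo_C(1,1,1)$ (adjunction again), so the local-to-global spectral sequence gives $\Ext^2(\Oo_C,\Oo_X(-1,-1,-1))=H^0(\Oo_C)=\CC$, using that $C$ is connected. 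Because $C\neq\emptyset$, the split extension $\Oo_X\oplus\Ii_C(1,1,1)$ is not locally free, so the locally free bundle $\Ee$ corresponds to the unique non-zero class of this one-dimensional Ext group; in particular $\Ee$ is determined up to isomorphism by $C$.

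Finally I would realize the decomposable bundle as such an extension. After permuting factors, $C=\{p\}\times Q$ with $Q\subset\PP^1\times\PP^1$ a smooth $(1,1)$-curve; the section $(\sigma_1,\sigma_2)$ of $\Oo_X(1,0,0)\oplus\Oo_X(0,1,1)$ with $\{\sigma_1=0\}=\{p\}\times\PP^1\times\PP^1$ and $\{\sigma_2=0\}=\PP^1\times Q$ vanishes exactly along $C$, and, the two divisors meeting properly, the associated Koszul sequence presents $\Oo_X(1,0,0)\oplus\Oo_X(0,1,1)$ as a non-zero extension of $\Ii_C(1,1,1)$ by $\Oo_X$. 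By the uniqueness just established, $\Ee\cong\Oo_X(1,0,0)\oplus\Oo_X(0,1,1)$ up to permutation of factors. I expect the connectedness step to be the real crux: with Lemma~\ref{f1} at hand it is a three-line argument, but without it one is forced to enumerate the few configurations of two disjoint conics and check in each case that $\Ii_C(1,1,1)$ acquires a one-dimensional base locus and hence fails to be globally generated.
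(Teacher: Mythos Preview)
Your proof is correct and follows the paper's overall strategy: use the Hartshorne--Serre sequence to see that $C$ is a disjoint union of smooth conics, invoke Lemma~\ref{f1} on all three projections to force connectedness, then identify $\Ee$. Your connectedness argument via the intersection numbers $e[1]_je[2]_k+e[1]_ke[2]_j=0$ is a clean repackaging of the paper's case analysis (the paper first bounds $s\le 2$ using $\deg C\le 6$ and then rules out $s=2$ by the same embedding obstruction you use).

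The one genuine difference is the final identification. The paper argues that $\Aut(X)$ acts transitively on conics of a fixed multidegree type, so one may transport $C$ to the zero-locus $C'$ of a section of $\Oo_X(1,0,0)\oplus\Oo_X(0,1,1)$ and conclude $\Ee\cong f^\ast(\Oo_X(1,0,0)\oplus\Oo_X(0,1,1))$; the uniqueness of the bundle associated to $C$ is left implicit (it is the $s=1$ case of Remark~\ref{24nov}). You instead compute $\Ext^1(\Ii_C(1,1,1),\Oo_X)\cong H^0(\Oo_C)=\CC$ directly and realize the split bundle by an explicit Koszul section. Your route is more self-contained and makes the uniqueness step explicit; the paper's is shorter but leans on the ambient symmetry and on Remark~\ref{24nov}. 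Either way the content is the same.
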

\begin{proof}
Since $\Ee$ is assumed to have no trivial factor, then $C\ne \emptyset$. From the sequence (\ref{equ+}) we have $\omega_C \cong \Oo_C(-1)$ and so each connected component of $C$ is a smooth conic, i.e. it is a fiber of one of $\pi_i$. Let $s$ be the number of connected components of $C$ and write $C=C_1 \sqcup \cdots \sqcup C_s$ with each $C_i$ a smooth conic. Since the sheaf $\Ii _C(1)$ is spanned, we have $\deg ({C})\le 6$. The equality holds if and only if $C$ is the complete intersection of $X$ with two hyperplane sections, which is not possible since we would have $\omega _C\cong \Oo_C$ from the minimal free resolution of $\Ii_C$. Thus we have $1\le s \le 2$.

If $s=1$, $C$ is a connected and smooth conic and so it is a hyperplane section of a fiber of a projection. For instance, if $C=\{o_1\} \times C'$ with $C'$ a smooth conic in $\PP^1 \times \PP^1$, we can take $[x_{11}, x_{12}]$ so that $o_1=[1,0]$. Then, using that $\{o_1\} \times \PP^1 \times \PP^1$ is defined by the $4$ linear equations
$$\{ x_{12}x_{21}= x_{12}x_{22}=x_{12}x_{31}=x_{12}x_{32}=0\},$$
we get that $\Ii_C(1)$ is spanned. Since $\Oo _X(1,0,0)\oplus \Oo _X(0,1,1)$ has $(1+t_1)(1+t_2+t_3) = 1+(t_1+t_2+t_3)+(t_1t_2+t_1t_3)$, it is associated to a connected curve $C'$ of multidegree $(0,1,1)$. Take $f\in \mathrm{Aut}(X)$ with
$f({C}) = C'$ and get that $\Ee \cong f^*(\Oo _X(1,0,0)\oplus \Oo _X(0,1,1)) \cong \Oo _X(1,0,0)\oplus \Oo _X(0,1,1)$.

Now assume $s=2$ and then we have $e[1]_1+e[1]_2+e[1]_3=e[2]_1+e[2]_2+e[2]_3=2$. Since ${\pi _{12}}_{\vert_C}$ is an embedding, $\pi_{12}({C})$ is the disjoint union of two lines
of $\PP^1\times \PP^1$ and so we have either $e[1]_1=e[2]_1 =0$ or $e[1]_2=e[2]_2 =0$. With no loss of generality we may assume
$e[1]_2=e[2]_2=0$. Since each $\pi _{13}(C_i)$ is a curve of type $(1,1)$, we get $\pi _{13}(C_1)\cap \pi _{13}(C_2)\ne \emptyset$ and so ${\pi _{13}}_{\vert_C}$ is not
an embedding, a contradiction.
\end{proof}
Now we consider the higher rank case.

\begin{lemma}\label{a2++}
Let $C\subset X$ be a smooth curve such that $\omega _C(1,1,1)$ and $\Ii _C(1,1,1)$ are globally generated. If $\omega _C(1,1,1)$ is not trivial, then $C$ is connected and it is one of the following:
\begin{enumerate}
\item[(i)] a linearly normal elliptic curve of degree $6$ with multidegree $(2,2,2)$,
\item[(ii)] a normal rational curve of degree $3$ with multidegree $(1,1,1)$,
\item[(iii)] a normal rational curve of degree $4$ with multidegree $(2,1,1)$, up to permutations.
\end{enumerate}
In each case we have $h^0(\omega_C(1,1,1))=6,2$ and $3$, respectively.
\end{lemma}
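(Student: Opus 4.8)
The plan is to extract everything from the complete intersection curve $Y$ attached to $C$ and from the three projections $\pi_{12},\pi_{13},\pi_{23}$. By Remarks \ref{uuu00} and \ref{rem3.3.3} applied with $\Ll=\Oo_X(1,1,1)$, a general $Y:=M_1\cap M_2$ with $M_1,M_2\in|\Ii_C(1,1,1)|$ is a connected curve with $p_a(Y)=1$, $\deg Y=6$, multidegree $(2,2,2)$, and $Y=C\cup D$ with $D$ empty or a reduced curve sharing no component with $C$, each component of $C$ occurring with multiplicity one in $Y$. Since $\Ii_C(1,1,1)$ is globally generated, Lemma \ref{f1}, applied also after permuting the three factors of $X$ (which fixes $\Oo_X(1,1,1)$), shows that $\pi_{12}\vert_C$, $\pi_{13}\vert_C$ and $\pi_{23}\vert_C$ are all embeddings.

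\textbf{Step 1: $C$ is connected.} Suppose $C=C_1\sqcup\cdots\sqcup C_s$ with $s\ge2$. Injectivity of $\pi_{12}\vert_C$ forces the curves $\pi_{12}(C_i)\subset\PP^1\times\PP^1$ to be pairwise disjoint, hence of zero mutual intersection number, i.e. $e[i]_1e[j]_2+e[i]_2e[j]_1=0$ for $i\neq j$; as each $(e[i]_1,e[i]_2)\neq(0,0)$ this forces one fixed slot, say the second, with $e[i]_2=0$ for all $i$. Then $\pi_{23}(C_i)$ is an irreducible reduced curve of bidegree $(0,e[i]_3)$, hence a single fibre, so $e[i]_3=1$ and $C_i\cong\PP^1$; as $C$ is smooth its components are disjoint, so $\omega_C(1,1,1)\vert_{C_i}\cong\Oo_{C_i}(e[i]_1-1)$, and global generation of $\omega_C(1,1,1)$ gives $e[i]_1\ge1$. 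But then $\pi_{13}(C_1)$ and $\pi_{13}(C_2)$, of bidegrees $(e[1]_1,1)$ and $(e[2]_1,1)$ with $e[i]_1\ge1$, must meet, contradicting injectivity of $\pi_{13}\vert_C$. Hence $C$ is connected, and being smooth it is irreducible; and since $\omega_C(1,1,1)$ is globally generated no component of $C$ is a line, so $\deg C\ge2$.

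\textbf{Step 2: the multidegree.} From $C\subseteq Y$ with each component of multiplicity one we get $[C]\le[Y]$ coordinatewise, so $e_i\le2$ and $\deg C\le6$. Because $\pi_{12}\vert_C$ is an embedding, $g(C)=g(\pi_{12}(C))=(e_1-1)(e_2-1)$, and the analogous identities for the other pairs force $g(C)\le1$, with $g(C)=1$ only if $(e_1,e_2,e_3)=(2,2,2)$, whence $\deg C=6$. If $\deg C=6$ then $C=Y$ is the complete intersection of two divisors of type $(1,1,1)$; twisting the Koszul complex gives $0\to\Oo_X(-1,-1,-1)\to\Oo_X^{\oplus2}\to\Ii_C(1,1,1)\to0$, and since all cohomology of $\Oo_X(-1,-1,-1)$ vanishes by Künneth we obtain $h^0(\Ii_C(1,1,1))=2$ and $h^1(\Ii_C(1,1,1))=0$; thus $C$ is a linearly normal elliptic curve of degree $6$ with $\omega_C\cong\Oo_C$, and $h^0(\omega_C(1,1,1))=h^0(\Oo_C(1,1,1))=6$ --- this is case (i). If $\deg C\le5$ then $g(C)=0$, so $\omega_C(1,1,1)\cong\Oo_{\PP^1}(\deg C-2)$, which is nontrivial precisely when $\deg C\ge3$; combining this with $e_i\le2$ and $\min(e_i,e_j)\le1$ (from $g(C)=0$) leaves only the multidegrees $(1,1,1)$, $(2,1,1)$ and $(2,1,0)$ up to permutation. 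The multidegree $(2,1,0)$ is impossible: such a $C$ would lie in a fibre $X_3$ of $\pi_3$ as a curve of bidegree $(2,1)$, so every member of $|\Ii_C(1,1,1)|$ would restrict on $X_3\cong\PP^1\times\PP^1$ to a $(1,1)$-curve containing $C$, forcing it to contain all of $X_3$, contradicting global generation of $\Ii_C(1,1,1)$.

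\textbf{Step 3: the last two cases, and the main obstacle.} For $(1,1,1)$, $C$ is a smooth rational curve of degree $3$; it cannot be contained in a plane (a smooth plane cubic has genus $1$), hence spans a $\PP^3$ and is a normal rational cubic, with $h^0(\omega_C(1,1,1))=h^0(\Oo_{\PP^1}(1))=2$ --- case (ii). For $(2,1,1)$, $C$ is a smooth rational quartic, which again cannot be planar, so the only remaining point is that $C$ spans a $\PP^4$, i.e. that $h^1(\Ii_C(1,1,1))=0$. Here $\deg C=4<6$, so $C\subsetneq Y$ and the residual conic $D$ has multidegree $(0,1,1)$: a connected $(1,1)$-curve in a fibre of $\pi_1$ with $p_a(D)=0$, and $\deg(C\cap D)=2$ since $\chi(\Oo_Y)=0$. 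Using the Koszul vanishings $h^1(\Ii_Y(1,1,1))=h^2(\Ii_Y(1,1,1))=0$ together with the exact sequence $0\to\Ii_Y(1,1,1)\to\Ii_C(1,1,1)\to\Ff\to0$, where $\Ff=\Ii_{C\cap D,D}\otimes\Oo_D(1,1,1)$ is a rank one torsion free sheaf on $D$ with $\chi(\Ff)=1$, one reduces to $h^1(D,\Ff)=0$; by Serre duality on the Gorenstein curve $D$ this is $\Hom_D(\Ff,\omega_D)=0$, which is checked by a direct computation on the components of $D$ (note $\omega_D$ has degree $-2$ while $\Ff$ has degree $0$). This gives that $C$ is a normal rational quartic with $h^0(\omega_C(1,1,1))=h^0(\Oo_{\PP^1}(2))=3$ --- case (iii). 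I expect the linear normality in case (iii) to be the delicate point, since it requires controlling the possibly reducible residual conic $D$ and the length two scheme $C\cap D$; by contrast, the multidegree analysis in Steps 1--2 is essentially combinatorial once $Y$ and the embeddings $\pi_{ij}\vert_C$ are available.
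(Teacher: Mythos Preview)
Your proof is essentially correct and takes a somewhat different route from the paper's. The paper first bounds $\deg C\le 6$, rules out $\deg C=5$ by a base-locus argument (the residual line $D$ would satisfy $\deg(D\cap C)\ge 2$, hence lie in the base locus of $\Ii_C(1,1,1)$), and then rules out $s=2$ by noting that two disjoint conics make $\omega_C(1,1,1)$ trivial. You instead prove $s=1$ directly from the three embeddings $\pi_{ij}\vert_C$ (Step~1), and eliminate degree~$5$ through the genus constraint $(e_i-1)(e_j-1)=g(C)$. Both approaches are clean; yours uses the projections more systematically, the paper's is slightly shorter. Your exclusion of multidegree $(2,1,0)$ is correct, though it also drops out instantly from $\pi_{13}\vert_C$ being an embedding (a smooth irreducible curve of bidegree $(2,0)$ does not exist).

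The one genuine gap is in Step~3, case (iii). You assert that the residual $D$ is ``a connected $(1,1)$-curve in a fibre of $\pi_1$'', but a reduced curve of multidegree $(0,1,1)$ could equally well be two lines of multidegrees $(0,1,0)$ and $(0,0,1)$ lying in \emph{different} fibres of $\pi_1$; your formula $\deg(C\cap D)=2$ then fails (it becomes $3$), and the promised ``direct computation on the components of $D$'' is not carried out. In fact this residual argument is unnecessary, because linear normality is automatic from the multidegree. Since $e_2=e_3=1$, the component maps $f_2,f_3:\PP^1\to\PP^1$ are isomorphisms, so the image of $V_2\otimes V_3$ under multiplication already fills $H^0(\Oo_{\PP^1}(2))$; and $f_1^\ast(V_1)\subset H^0(\Oo_{\PP^1}(2))$ is a base-point-free pencil, so the Koszul sequence $0\to\Oo_{\PP^1}\to\Oo_{\PP^1}(2)^{\oplus 2}\to\Oo_{\PP^1}(4)\to 0$ shows that multiplication by this pencil maps $H^0(\Oo_{\PP^1}(2))$ onto $H^0(\Oo_{\PP^1}(4))$. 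Hence $H^0(\Oo_X(1,1,1))\to H^0(\Oo_C(1,1,1))$ is surjective and $C$ spans a $\PP^4$. (The paper does not spell this out either.)
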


\begin{proof}
We have $\deg ({C})\le 6$ and the equality holds if and only if $C=Y$, the complete intersection of two elements in $|\Oo_X(1,1,1)|$. Here $C$ has $s=1$ and multidegree $(2,2,2)$. Example \ref{ooo1}
gives $h^0(\omega _C(1,1,1)) +1 =7$.

From now on we assume $\deg ( C) \leq 5$. Let $Y$ be the intersection of two general elements of $|\Ii _C(1,1,1)|$. By Remark \ref{uuu00} we have $Y = C\cup D$ with $D$ a reduced curve, $C\cap D$ finite and $D$ smooth outside $C\cap D$. If $T$ is a smooth elliptic curve and $D$ is a reduced curve with $D\cap T \ne \emptyset$, then we have
${\omega _{T\cup D}}_{|_T} \not \cong \Oo _T$. Since $D\ne \emptyset$ and $\omega _Y\cong \Oo _Y$, each $C_i$ is smooth and rational. Since $\omega _C(1,1,1)$ is globally generated, no connected component of $C$ is a line. If $\deg ( C)=5$, then $D$ is a line. Since $p_a(Y)=1$, we have $\deg (D\cap C)\ge 2$ and so $D$ is in the base locus of $\Ii _C(1,1,1)$, a contradiction. Thus we have $\deg (C)\le 4$ and $s\le 2$ since no component of $C$ is a line. If $s=2$, then we have $\deg (C_i)=2$ for all $i$ and so $\deg (\omega_C(1,1,1))=0$. In particular we have $\omega_C(1,1,1) \cong \Oo_C$, contradicting our assumption.

From now on we assume $s=1$. We have $\deg ({C}) \ne 2$, because we assumed that $\omega _C(1,1,1)\ne \Oo _C$. Since $C$ is rational, we have $h^0(\omega _C(1,1,1)) =\deg ({C})-1$. Since $C$ is rational and ${\pi _{12}}_{\vert_C}$ is an embedding, so we have either $e_1=1$ or $e_2=1$. Similarly since ${\pi _{13}}_{\vert_C}$ and ${\pi_{23}}_{\vert_C}$ are embeddings, we have $e_1=1$ or $e_3=1$, and $e_2=1$ or $e_3=1$. In particular two of the integers $e_1,e_2,e_3$ are ones. Hence if $\deg ({C}) =3$, then $C$ has multidegree $(1,1,1)$, while if $\deg ({C})=4$, then $C$ has multidegree either $(2,1,1)$, $(1,2,1)$ or $(1,1,2)$. By symmetry one of them occurs if and only if all the three possibilities occur, but they give different families of bundles.
\end{proof}

\begin{remark}\label{ao.1}
Since all vector bundles of rank at least $2$ on $\PP^1$ are decomposable, Proposition \ref{trivial} shows that the decomposable vector bundles $\Ee$ without trivial factors, are obtained in the following way:

Take $i\in \{1,2,3\}$ and an integer $r$ such that there is a globally generated bundle $\Ff$ of rank $r-1$ on $\PP^1\times \PP^1\cong Q$ with $c_1(\Ff ) =(1,1)$ and no trivial factor. Take $\{j,k\}= \{1,2,3\}\setminus \{i\}$ with $j<k$ and set $\Ee:= \pi _i^{\ast}(\Oo _{\PP^1}(1))\oplus \pi _{jk}^\ast (\Ff)$. From \cite[Propositions 3.5 and 5.4]{BHM2} the possible $\Ff$ is as follows:
\begin{itemize}
\item[(1)] $\Oo_Q(1,0)\oplus \Oo_Q(0,1)$,
\item[(2)] $\phi_p^*T\PP^2(-1)$, where $\phi_p : Q \to \PP^2$ is the linear projection with the center $p\in \PP^3 \setminus Q$, or
\item[(3)] $T\PP^3(-1)_{\vert_{Q}}$.
\end{itemize}
In case (2) we have $h^0(\phi_p^*T\PP^2(-1)) = 3$ by \cite[proof of Proposition 3.5]{BHM2} and so $\Ff$ has rank $3$ and $h^0(\Ff ) =5$. It is the case $\mathrm{(iii)}$ of Proposition \ref{a2.1++}. On the other hand, we have $h^0(T\PP^3(-1)_{\vert_{Q}}) =4$ by \cite[proof of Proposition 5.4]{BHM2} and so $h^0(\Ff ) =6$. It is as in case $\mathrm{(iii)}$ of Proposition \ref{a2.1++} for $r=4$.\end{remark}

\begin{remark}\label{ao.1.1}
Take $\Ll = \Oo _X(1,1,1)$ in Remark \ref{ooo0}. Any bundle $\Ee$ of rank $r$ corresponds to an $(r+1)$-dimensional linear subspace of $H^0(\Oo _X(1,1,1))$ spanning $\Oo _X(1,1,1)$. Indeed if $r=7$, it gives $T\PP^7(-1)_{\vert_X}$, while if $3\le r \le 6$ it gives the bundles $\phi_W^*T\PP^r(-1)$ with $\phi_W: X \to \PP^r$ the restriction to $X$ of a linear projection from a linear subspace $W\subset \PP^7$ with $\dim (W) =6-r$ and $W\cap X = \emptyset$.
\end{remark}

\begin{proposition}\label{a2.1++}
Let $\Ee$ be a globally generated vector bundle of rank $r\ge 3$ on $X$ with $c_1=(1,1,1)$ and no trivial factor. If $C$ is a general dependency locus of $\Ee$, then $(p_a(C); e_1, e_2, e_3; r)$ are as follows:
\begin{itemize}
\item[(i)] $(1;2,2,2; 3 \le r \le 7)$; $C =Y$ and $\Ee$ is as in Remark \ref{ao.1.1}.
\item[(ii)]$(0;1,1,1; 3)$; $\Ee \cong \Oo _X(1,0,0)\oplus \Oo _X(0,1,0)\oplus \Oo _X(0,0,1)$.
\item[(iii)] $(0;2,1,1; 3 \le r \le 4)$, up to permutations on $(e_1, e_2, e_3)$; \\ $C$ is linearly normal in its linear span and $\Ee$ is as in Remark \ref{ao.1} $\mathrm{(2)}$ and $\mathrm{(3)}$ with $h^0(\Ee)=r+2$.
\end{itemize}
In case $(\rm i)$ for each $3\le r \le 7$ the bundles are parametrized by an irreducible family. In case $(\rm iii)$ for each $3\le r \le 4$ the bundles are parametrized by three irreducible families,
each of them corresponding to one of the possible multidegrees of $C$ and $h^0(\Ee ) = r+2$.
\end{proposition}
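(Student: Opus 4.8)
The plan is to read the Hartshorne--Serre correspondence backwards: first pin down the curve $C$, then reconstruct $\Ee$. By Proposition \ref{prop1} we may assume $H^0(\Ee(-c_1))=0$, so in (\ref{equ+}) the curve $C$ is nonempty, and for a general choice of the $r-1$ sections it is smooth. The quotient in (\ref{equ+}) shows $\Ii_C(1,1,1)$ is globally generated; from the local-to-global spectral sequence for $\Ext$, using that $H^i(\Oo_X(-1,-1,-1))=0$ for all $i$ and the adjunction formula on $C$, one gets $\Ext^1(\Ii_C(1,1,1),\Oo_X)\cong H^0(\omega_C(1,1,1))$, so the extension class of (\ref{equ+}) is an $(r-1)$-tuple of sections of $\omega_C(1,1,1)$; local freeness of $\Ee$ forces these sections to generate $\omega_C(1,1,1)$, and the absence of a trivial summand forces them to be linearly independent. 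Hence $\omega_C(1,1,1)$ is globally generated, no component of $C$ is a line, and $2\le r-1\le h^0(\omega_C(1,1,1))$. I would then split according to whether $\omega_C(1,1,1)\cong\Oo_C$.

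If $\omega_C(1,1,1)\cong\Oo_C$, then on each component $2p_a(C_i)-2=-\deg\Oo_{C_i}(1,1,1)\le-1$ forces $C_i$ to be a smooth conic, necessarily of multidegree a permutation of $(1,1,0)$ (a smooth conic cannot be a line, so $(2,0,0)$ and its permutations are impossible). Then $h^0(\Oo_C)=s$, so $r\ge3$ would require $s\ge2$. But $\Ii_C(1,1,1)$ globally generated makes all three of $\pi_{12}|_C,\pi_{13}|_C,\pi_{23}|_C$ embeddings (Lemma \ref{f1} and the symmetry among the three factors), and a short intersection-number analysis on $\PP^1\times\PP^1$ — exactly the one in the proof of Proposition \ref{a1} — shows two disjoint smooth conics in $X$ cannot have pairwise disjoint images under all three of these projections (any two components already give the contradiction). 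So this case produces no bundle of rank $\ge3$, and for a general $C$ we have $\omega_C(1,1,1)\not\cong\Oo_C$.

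Now Lemma \ref{a2++} applies (its hypotheses are verified), so $C$ is connected of exactly one of three types: (i) an elliptic sextic equal to $Y$, multidegree $(2,2,2)$, $h^0(\omega_C(1,1,1))=6$; (ii) a twisted cubic, multidegree $(1,1,1)$, $h^0=2$; (iii) a rational quartic, multidegree $(2,1,1)$ up to permutation, $h^0=3$. Combined with $2\le r-1\le h^0(\omega_C(1,1,1))$ this yields the rank ranges $3\le r\le7$, $r=3$, $3\le r\le4$, and these are sharp because on an irreducible curve a general subspace of dimension $\ge2$ of the (globally generated) $H^0(\omega_C(1,1,1))$ still generates $\omega_C(1,1,1)$. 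For the identification: in case (i), $Y$ is cut out by two forms $f_1,f_2\in H^0(\Oo_X(1,1,1))$, so $H^0(\Oo_Y(1,1,1))=H^0(\Oo_X(1,1,1))/\langle f_1,f_2\rangle$ and the subspace $V$ lifts to an $(r+1)$-dimensional subspace of $H^0(\Oo_X(1,1,1))$ spanning $\Oo_X(1,1,1)$; this is precisely the description of Example \ref{ooo1} and Remark \ref{ao.1.1}, giving $\phi_W^*T\PP^r(-1)$ for $3\le r\le6$ and $T\PP^7(-1)|_X$ for $r=7$, parametrized by an open subset of the Grassmannian $\mathrm{Gr}(r+1,8)$, hence by an irreducible family. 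In case (ii), $r-1=2=h^0(\omega_C(1,1,1))$ so $\Ee$ is determined by $C$; by Remark \ref{tr1} a multidegree $(1,1,1)$ curve is $\{(\phi_1(t),\phi_2(t),\phi_3(t))\}$ with the $\phi_i$ automorphisms, hence unique up to $\mathrm{Aut}^0(X)$; and $\Ee':=\Oo_X(1,0,0)\oplus\Oo_X(0,1,0)\oplus\Oo_X(0,0,1)$ is globally generated with no trivial summand, $c_1(\Ee')=c_2(\Ee')=(1,1,1)$, and the degeneracy locus of two general sections of $\Ee'$ is the intersection of two of the three $2\times2$ minors (of types $(1,1,0)$ and $(1,0,1)$), a twisted cubic of multidegree $(1,1,1)$; so $\Ee\cong\Ee'$.

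It remains to treat case (iii), which is where the real work is. I would first prove $C$ is linearly normal in its linear span by showing $h^0(\Ii_C(1,1,1))=3$: after normalizing (via $\mathrm{Aut}^0(X)$ and a reparametrization) the map defining $C$ to $t\mapsto([a(t):b(t)],t,t)$ with $a,b$ coprime binary quadratics, the restriction map $H^0(\Oo_X(1,1,1))\to H^0(\PP^1,\Oo(4))$ has image $a\cdot H^0(\PP^1,\Oo(2))+b\cdot H^0(\PP^1,\Oo(2))$, which is $5$-dimensional because $a,b$ are coprime; hence the restriction is onto, $h^0(\Ii_C(1,1,1))=3$, and (\ref{equ+}) gives $h^0(\Ee)=(r-1)+3=r+2$. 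For $r=4$ the bundle is determined by $C$ and one checks $\pi_i^*\Oo_{\PP^1}(1)\oplus\pi_{jk}^*(T\PP^3(-1)|_Q)$ is globally generated with the listed Chern classes and a rational quartic of multidegree $(2,1,1)$ as degeneracy locus, so it equals $\Ee$ (Remark \ref{ao.1}(3)). For $r=3$ the bundle corresponds to a base-point-free pencil in $H^0(\omega_C(1,1,1))\cong H^0(\PP^1,\Oo(2))$; the pairs (curve $C$ of a fixed multidegree, such a pencil) form an irreducible parameter space, so the resulting bundles form one irreducible family per multidegree, which contains the decomposable bundle $\pi_i^*\Oo_{\PP^1}(1)\oplus\pi_{jk}^*(\phi_p^*T\PP^2(-1))$ of Remark \ref{ao.1}(2); the three families are distinct since $c_2(\Ee)$ equals the multidegree of $C$. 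The main obstacle throughout is exactly this step: establishing linear normality (equivalently $h^0(\Ii_C(1,1,1))=3$), matching $\Ee$ with the models of Remark \ref{ao.1}, verifying irreducibility of the various parameter spaces, and — invoking \cite{man} — passing from "a general dependency locus is smooth" to the precise list of types stated above.
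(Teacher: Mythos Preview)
Your overall architecture---reduce to Lemma \ref{a2++}, then treat the three curve types separately---matches the paper, and your arguments for cases (i) and (ii) are essentially the paper's. Your direct parametrization proof that $h^0(\Ii_C(1,1,1))=3$ when $C$ has multidegree $(2,1,1)$ is a clean alternative to the paper's more implicit computation.

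The gap is in case (iii) for $r=3$. The proposition asserts that every such $\Ee$ \emph{is} one of the decomposable bundles $\pi_i^\ast\Oo_{\PP^1}(1)\oplus\pi_{jk}^\ast(\phi_p^\ast T\PP^2(-1))$ of Remark \ref{ao.1}(2). You prove only that the bundles with fixed multidegree form an irreducible family and that this family \emph{contains} the decomposable ones. That is not enough: an irreducible family can perfectly well have a proper closed (or open) locus of decomposable members while the generic member is indecomposable. Nothing in your argument rules this out. (Your $r=4$ case is closer to complete, since there the bundle is determined by $C$ and one can use transitivity of $\mathrm{Aut}^0(X)$ on curves of multidegree $(2,1,1)$ together with $\mathrm{Aut}^0(X)$-invariance of the model bundle; but you should spell that out.)

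The paper closes this gap by a different route. From $h^0(\Ii_C(0,1,1))>0$ one gets a nonzero map $u:\Oo_X(1,0,0)\to\Ee$ whose image is saturated, so $\Gg:=\mathrm{coker}(u)$ is torsion-free with $c_1(\Gg)=(0,1,1)$; the point is then to prove $\Gg$ is \emph{locally free}, after which $\Gg\cong\pi_{23}^\ast(\phi_p^\ast T\PP^2(-1))$ and a direct check that $h^1(\Gg^\vee(1,0,0))=0$ splits the extension $0\to\Oo_X(1,0,0)\to\Ee\to\Gg\to 0$. Local freeness of $\Gg$ is the delicate step: the paper shows $h^1(\Ee^\vee)=1$, so there is a unique nontrivial extension $\Ff$ of $\Ee$ by $\Oo_X$, identifies $\Ff\cong\Oo_X(1,0,0)\oplus\pi_{23}^\ast(T\PP^3(-1)|_Q)$ via the $r=4$ analysis, and then realizes $\Ee$ as $\mathrm{coker}(m)$ for $m=(m_1,m_2):\Oo_X\to\Ff$; a short zero-locus argument on $m_2$ shows $\Gg$ has no singular points. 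Your irreducibility-plus-containment argument does not substitute for this.
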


\begin{proof}
In the case $r=2$ we saw that there is no example with $s\ge 2$ and $\omega _C \cong \Oo _C$. Thus we only need to check which dependency loci with $\omega_C(1,1,1)$ spanned and $\omega _C(1,1,1) \not \cong \Oo _C$ arises for some bundle $\Ee$. Lemma \ref{a2++} gives a list of the potential curves $C$. If a certain $C$ exists, then it is dependency locus of a globally generated $\Ee$ of rank $r$ if and only if we have $3 \le  r \le h^0(\omega _C(1,1,1)) -1$. For the case $C=Y$, see Remark \ref{ao.1.1}.

\quad (a) Since $ \Ee_0:=\Oo _X(1,0,0)\oplus \Oo _X(0,1,0)\oplus \Oo _X(0,0,1)$ has $(1+t_1)(1+t_2)(1+t_3) = 1+(t_1+t_2+t_3) +(t_1t_2+t_1t_3+t_2t_3)+t_1t_2t_3$, it is associated
to a curve $C'$ of multidegree $(1,1,1)$. Lemma \ref{a2++} gives that $C'$ is connected. Take any $\Ee$ associated to a curve $C$ with multidegree $(1,1,1)$. There is $f\in \mathrm{Aut}(X)$ such that $f(C') =C$. Since $f^\ast \Ee_0 \cong \Ee_0$, $\Ee$ has rank $3$ and $h^0(\omega _C(1,1,1)) =2$, the Hartshorne-Serre correspondence gives $\Ee \cong \Ee_0$.

\quad (b) Let us check that $h^1(\pi _{23}^\ast (\phi _p^*T\PP^2(-1))^\vee(1,0,0)) =0$ and also that $h^1(\pi _{23}^*(\Omega _{\PP^3}(1)_{|Q})(1,0,0)) =0$, i.e. that there are no non-trivial extension of either $\pi _{23}^*(\phi_p^*T\PP^2(-1))$ or $\pi _{23}^*(T\PP^3(-1)_{\vert_{Q}})$ by $\Oo _X(1,0,0)$. Taking the pull-back first by by $\phi _p$ and then by
$\pi _{23}^*$ of the dual of the Euler's sequence of $T\PP^2$, we get the exact sequence
$$0 \to \Oo _X(0,-1,-1) \to \Oo _X(1,0,0)^{\oplus 3} \to \pi _{23}^*(\phi _p^*T\PP^2(-1)^\vee)(1,0,0) \to 0.$$
Since $h^2(\Oo _X(0,0,-1)) = h^1(\Oo _X(1,0,0)) =0$, we get
$$h^1(\pi _{23}^*(\phi _p^*T\PP^2(-1))^\vee(1,0,0)) =0.$$
Taking the pull-back by $\pi _{23}$ of the dual of the Euler sequence of $T\PP^3$, we get the exact sequence
$$0 \to \Oo _X(0,-1,-1) \to \Oo _X(1,0,0)^{\oplus 4} \to \pi _{12}^*(\Omega _{\PP^3}(1)_{|Q})(1,0,0)\to 0$$
and so we have $h^1(\pi _{23}^*(\Omega _{\PP^3}(1)_{|_Q})(1,0,0)) =0$.

 \quad ({c}) Take $C$ with multidegree $(2,1,1)$. Since $\deg (\Oo _C(0,1,1)) =2$, we have $h^0(\Oo _C(0,1,1)) =3$ and it implies $h^0(\Ii _C(0,1,1)) >0$ and so we get a non-zero map $u: \Oo _X(1,0,0) \to \Ee$. Since $h^0(\Ii _C(-1,1,1))=h^0(\Ii_C(0,0,1))=h^0(\Ii_C(0,0,1))=0$, so the sheaf $\mathrm{Im} (u)$ is saturated in $\Ee$ and $\Gg := \mathrm{coker} (u)$ is torsion-free. Since $\Ee$ is spanned, $h^1(\Oo _X(1,0,0)) =0$ and $h^0(\Ee )=r+2$, so the sheaf $\Gg$ has rank $r-1$ and $h^0(\Gg )=r$. Since $\Ee$ has no trivial factor, $\Gg$ has no trivial factor. Therefore $\Gg$ is the cokernel of a map $v: \Oo _X(0,-1,-1) \to \Oo _X^{\oplus r}$ with $v$ induced by an $r$-dimensional linear subspace $V$
of $H^0(\Oo _X(0,1,1))$. The map $v$ and the linear space $V$ correspond to a map $v': \Oo _{\PP^1\times \PP^1}(-1,-1) \to \Oo _{\PP^1\times \PP^1}^{\oplus r}$
and an $r$-dimensional linear subspace $V'$ of $H^0(\Oo _{\PP^1\times \PP^1}(1,1))$. $\Gg$ is locally free, i.e. it is as in the last two cases of Remark \ref{ao.1}, if and only if $\Gg '$ is locally free. Every $V' $ with no common zero defines a locally free $\Gg'$ and hence a locally free $\Gg$. Assume for the moment that $\Gg$ is locally free. Since $\Gg$ is spanned and
$\Ee$ is not a direct sum of three line bundles, so we get that $\Gg$ is either as in case (2) with $r=3$ or as in case (3) with $r=4$ of Remark \ref{ao.1}. Any $V'$ gives an injective map
$v'$ of sheaves, while every  $\Gg'$ is locally free if and only if $V'$ has no common zero. If $r=4$, then this is true, because $V =H^0(\Oo _{\PP^1\times \PP^1}(1,1))$ in this case.
Hence by case (3) of Remark \ref{ao.1} is the only bundle with $r=4$ in case (iii), while if $r=3$ case (2) of Remark \ref{ao.1} gives the only bundles $\Ee$ with $\Gg '$ locally free. Every $\Ee$ with $\Gg'$ not locally free is the flat limit of a family of bundles with $\Gg '$ locally free, i.e. of bundles as in case (2) of Remark \ref{ao.1}. To conclude the proof of Proposition \ref{a2.1++} it is sufficient to exclude the existence of $\Ee$ with $C$ of multidegree $(2,1,1)$ and which cannot be associated
to a locally free $\Gg$.

\quad {\emph{Claim $1$}}: For $\Ee$ with $r=3$ associated to $C$, we have $h^0(\Ee (-1,0,0)) =1$.

\quad {\emph {Proof of Claim $1$}}: It is sufficient to check that $h^0(\Ii _C(0,1,1)) =1$. This is true, because there are $o\in \PP^1$ and a unique $C' \in |\Oo _{\PP^1\times \PP^1}(1,1)|$ such that $C =\{o\}\times C'$. \qed

By \emph{Claim $1$}, $\Gg$ is uniquely determined by $\Ee$ and hence to complete the proof of Proposition \ref{a2.1++} it is sufficient to prove that $\Gg$ is locally free
for every  $\Ee$.

\quad {\emph {Claim $2$}}: For any $\Ee$ with $r=3$ we have $h^1(\Ee ^\vee )=1$.

\quad {\emph {Proof of Claim $2$}}: This is implicit in the Hartshorne-Serre correspondence. We have $\Ext^1(\Ii _C(1,1,1),\Oo _X) \cong H^0(\omega _C(1,1,1))^\vee$ and $\Ee$ is just given by a $2$-dimensional linear subspace $V$ of the $3$-dimensional space $H^0(\omega _C(1,1,1))^\vee$, while $H^0(\omega _C(1,1,1))^\vee/V$ represents $H^1(\Ee ^\vee )$. \qed

Let $\Ff$ be the only non-trivial extension of $\Ee$ by $\Oo _X$. By step (b) we have $\Ff \cong \Oo_X(1,0,0)\oplus \pi _{23}^*T\PP^3(-1)_{\vert_{Q}}$. Hence any $\Ee$
is the cokernel of a non-zero map $m: \Oo _X\to  \Oo_X(1,0,0)\oplus \pi _{23}^*T\PP^3(-1)_{\vert_{Q}}$. Write $m = (m_1,m_2)$ with $m_1\in H^0(\Oo _X(1,0,0))$ and $m_2\in H^0(\pi _{23}^*T\PP^3(-1)_{\vert_{Q}})$. We see that $\Gg$ is locally free if and only if $m_2$ has no common zero. Assume that $m_2$ has a common zero at $p =(p_1,p_2,p_3)\in \PP^1\times \PP^1\times \PP^1$ for some $\Ee$. We get that $m_2$ vanishes on $\PP^1\times \{(p_2,p_3)\}$.
Since $m_1$ has at least one zero on $\PP^1\times \{(p_2,p_3)\}$, we get a contradiction.
\end{proof}


\section{Case of $c_1=(2,1,1)$}

In this section we handle the globally generated vector bundles of rank $r$ on $X$ with $c_1=(a_1,a_2,a_3)$ with $a_i>0$ for all $i$ and $a_1+a_2+a_3=4$. Up to a change of the ruling
it is sufficient to do the case $c_1=(2,1,1)$ with the sequence (\ref{equ+}). Since we only look at bundles with no trivial factors, we have $C\ne \emptyset$. Note that $\omega_C(0,1,1)$ is globally generated and that $\omega _C\cong  \Oo_C(0,-1,-1)$ if $r=2$. Since $\omega _{C_i}(0,1,1)$ is spanned, so $C_i$ is not a line. By the case $b=1$ of Lemma \ref{f1} both $\pi _{12|_C}$ and $\pi _{13|_C}$ are embeddings. We have $(1+2t_1+t_2+t_3)(1+2t_1+t_2+t_3) =1+4t_1+2t_2+2t_3+4t_1t_2+4t_1t_3+2t_2t_3$. Therefore any scheme-theoretic intersection $Y$ of two elements in $|\Oo _X(2,1,1)|$ with $\dim (Y) = 1$ has multidegree $(2,4,4)$. We also have $h^0(\Oo _Y)=1$. Since $\omega _Y \cong \Oo_Y(2,0,0)$ by the adjunction formula, so we have $\deg (\omega _Y) = (4t_1t_2+4t_1t_3+2t_2t_3)(2t_1)  =4$, i.e. $Y$ has genus $3$, i.e.
$\Oo _X(2,1,1)$ has section genus $3$. Hence $C$ has multidegree $(e_1,e_2,e_3)$ with $e_1\le 2$, $e_2\le 4$, $e_3\le 4$ and $(e_1,e_2,e_3) =(2,4,4)$ if and only
if $C=Y$, i.e. $\Ee $ is as in Example \ref{ooo0}.

\begin{lemma}\label{salto1}
Assume $s>1$. Then we have
\begin{itemize}
\item[(i)] $2 \le s \le 3$
\item[(ii)] $e_1= 0$, $e[i]_2=e[i]_2= 1$ for all $i$, and
\item[(iii)] $\omega _C(0,1,1)\cong \Oo _C$.
\end{itemize}
\end{lemma}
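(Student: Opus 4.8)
The plan is to exploit the two embeddings $\pi_{12|_C}$ and $\pi_{13|_C}$ together with the constraints coming from the Hartshorne--Serre sequence (\ref{equ+}) and the adjunction/spannedness facts already assembled. Write $C = C_1 \sqcup \cdots \sqcup C_s$ with $s \ge 2$. Since $\omega_{C_i}(0,1,1)$ is globally generated and no $C_i$ is a line, each $C_i$ has $\deg(\Oo_{C_i}(0,1,0)) + \deg(\Oo_{C_i}(0,0,1)) \ge 1$, and in fact, because $\pi_{12|_C}$ is an embedding, $\pi_{12}(C)$ is a disjoint union of $s$ smooth curves in $\PP^1 \times \PP^1$; likewise for $\pi_{13}$. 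I would first show that $\pi_{12}(C_i)$ cannot be a curve of bidegree $(\ge 1, \ge 1)$ for two distinct indices $i$, since two such curves in $\PP^1 \times \PP^1$ necessarily meet, contradicting the embedding property; the same applies to $\pi_{13}$. This forces, after possibly relabelling, $e[i]_1 = 0$ for all $i$ (a single common projection direction must be ``killed'' on each component), giving part (ii)'s first assertion $e_1 = 0$; the remaining information $e[i]_2 = e[i]_3 = 1$ will then follow from a short case analysis below.

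**Pinning down the multidegrees.** With $e[i]_1 = 0$, each $C_i$ maps isomorphically via $\pi_{23|_{C_i}}$ to a curve of bidegree $(e[i]_2, e[i]_3)$ in $\PP^1 \times \PP^1$, and $C_i$ is smooth rational or elliptic depending on whether this bidegree gives arithmetic genus $0$ or more. By Remark \ref{f3}(2) applied with $(a,b) = (2,1)$ (note the factor order: here the ``$1$'' is in the first slot among the last two), each $C_i$ is smooth rational with $\omega_{C_i} \cong \Oo_{C_i}(-2, e[i]_2 - 2, e[i]_3 - 2)$ wait — more carefully, since $\omega_{C_i}(0,1,1)$ is spanned on the smooth rational curve $C_i$, its degree $\deg(\omega_{C_i}) + e[i]_2 + e[i]_3 = -2 + e[i]_2 + e[i]_3 \ge 0$, so $e[i]_2 + e[i]_3 \ge 2$; and since each $C_i$ must not be a line we cannot have $e[i]_2 + e[i]_3 = 1$. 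The key refinement: because $\pi_{13|_C}$ is an embedding and each $\pi_{13}(C_i)$ has bidegree $(0, e[i]_3)$ — a union of $e[i]_3$ fibers — disjointness across $i$ forces $e[i]_3 = 1$ for every $i$ (if some $e[i]_3 \ge 2$, that single $C_i$'s image already fails to be irreducible, or rather $C_i \to \pi_{13}(C_i)$ is not injective on a reducible image). Symmetrically, using the embedding $\pi_{12|_C}$ with $\pi_{12}(C_i)$ of bidegree $(0, e[i]_2)$ gives $e[i]_2 = 1$ for all $i$. Hence $e[i]_2 = e[i]_3 = 1$, completing (ii).

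**Bounding $s$ and computing $\omega_C$.** Once $e[i]_2 = e[i]_3 = 1$ for all $i$, we get $\deg(\omega_{C_i}) = -2$ and $e[i]_2 + e[i]_3 - 2 = 0$, so $\omega_{C_i}(0,1,1) \cong \Oo_{C_i}$ for each $i$, hence $\omega_C(0,1,1) \cong \Oo_C$, which is (iii). For (i), note the total multidegree is $(0, s, s)$ and $\deg(C) = 2s$. Since $\Ii_C(2,1,1)$ is globally generated, $C$ sits inside the complete intersection curve $Y$ with multidegree $(2,4,4)$ and $C$ appears in $Y$ with multiplicity one (Remark \ref{rem3.3.3}); comparing multidegrees, $s \le 4$ componentwise in the $e_2$, $e_3$ slots, but I would push to $s \le 3$ using that $Y = C \cup D$ with $D$ reduced (Remark \ref{uuu00}): if $s = 4$ then $C = Y$ as multidegrees force $D$ to have multidegree $(2,0,0)$, i.e. $D$ would be two fibers of $\pi_1$, but then $\omega_Y \cong \Oo_Y(2,0,0)$ is nontrivial while $\omega_C \cong \Oo_C(0,-1,-1)$ forces on each component of a would-be $C = Y$ the value $\omega_{C_i} \cong \Oo_{C_i}(0,-1,-1)$, incompatible with $e[i]_1 = 0$ unless... — more directly, $D$ nonempty with $D \cap C \ne \emptyset$ and $\omega_Y \cong \Oo_Y(2,0,0)$ restricted to an elliptic-type curve is impossible when all $C_i$ are rational, and if $D = \emptyset$ then $C = Y$ has a single connected component, contradicting $s \ge 2$; so $s = 4$ is excluded and $2 \le s \le 3$.

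**Main obstacle.** The delicate point is the step forcing $e[i]_1 = 0$ simultaneously for \emph{all} $i$ from a single choice of ``killed'' projection. A priori different components could have their trivial projection in different slots (some $C_i$ with $e[i]_2 = 0$, others with $e[i]_1 = 0$). Ruling this out requires combining the three embedding conditions $\pi_{12|_C}$, $\pi_{13|_C}$, $\pi_{23|_C}$ — the last of which holds here because $s \ge 2$ forces each $C_i$ to be a union of fibers in at least two of the three $\PP^1 \times \PP^1$ projections, making the third projection an embedding too. I expect the cleanest argument is: each $C_i$, being smooth rational with two of $e[i]_1, e[i]_2, e[i]_3$ equal to some pattern, is forced (as in the genus argument of Lemma \ref{a2++}) to have at least two of its multidegree entries equal to... and then a counting/intersection argument on $\PP^1 \times \PP^1$ shows the same slot must vanish for every component, else two components' images in some $\pi_{jk}$ intersect. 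This bookkeeping, rather than any deep input, is where the care is needed.
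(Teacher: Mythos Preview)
Your overall approach coincides with the paper's, but there are two genuine gaps in the execution.

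\textbf{The ``main obstacle'' is a phantom, and your proposed fix is wrong.} You cannot invoke $\pi_{23|_C}$ as an embedding: Lemma~\ref{f1} applies only when the corresponding coordinate of $c_1$ is $1$, and for $c_1=(2,1,1)$ this yields embeddings for $\pi_{12}$ and $\pi_{13}$ only. Fortunately you do not need $\pi_{23}$. Since $\pi_{12|_C}$ is an embedding and $s\ge 2$, the image $\pi_{12}(C)\subset\PP^1\times\PP^1$ is a smooth disconnected curve, hence a disjoint union of lines \emph{all in the same ruling} (a line from each ruling meet, and any smooth irreducible curve of bidegree $(\ge 1,\ge 1)$ meets every curve). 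So either $(e[i]_1,e[i]_2)=(0,1)$ for all $i$, or $(e[i]_1,e[i]_2)=(1,0)$ for all $i$; the same dichotomy from $\pi_{13}$ gives $(e[i]_1,e[i]_3)\in\{(0,1),(1,0)\}$ uniformly in $i$. If the second option holds in both, then each $C_i$ has multidegree $(1,0,0)$, a line, contradicting that no $C_i$ is a line. Hence $e[i]_1=0$ and $e[i]_2=e[i]_3=1$ for all $i$, with no further bookkeeping. This is exactly the paper's argument, phrased as ``any smooth curve of $\PP^1\times\PP^1$ with $s\ge 2$ components has bidegree $(s,0)$ or $(0,s)$''.

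\textbf{Your exclusion of $s=4$ does not work.} You write ``if $s=4$ then $C=Y$'' and in the same breath assign $D$ multidegree $(2,0,0)$; the subsequent remarks about $\omega_Y$ restricted to an ``elliptic-type curve'' and about $D=\emptyset$ do not produce a contradiction. The correct argument (the paper's): if $s=4$, then $C$ has multidegree $(0,4,4)$ and $D=Y\setminus C$ has multidegree $(2,0,0)$, so $D=D_1\sqcup D_2$ with each $D_j$ a line of multidegree $(1,0,0)$. Since $\Ii_C(2,1,1)$ is globally generated and $\deg(\Oo_{D_j}(2,1,1))=2$, the line $D_j$ is not in the base locus and $\deg(C\cap D_j)\le 2$. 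Thus $Y=C\cup D$ has six smooth rational irreducible components and at most four nodes, forcing $p_a(Y)\le 4-6+1=-1$, which contradicts $p_a(Y)=3$.
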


\begin{proof}
Note that $\pi _{12|_C}$ is an embedding and any smooth curve of $\PP^1\times \PP^1$ with $s\ge 2$ connected components has either bidegree $(s,0)$ or bidegree $(0,s)$. Thus we have either $e_1=0$ or $e_2=0$. If $e_1=0$ (resp. $e_2=0$), then $e[i]_2=1$ (resp. $e[i]_1 =0$) for all $i$. Since $\pi _{13|_C}$ is an embedding, we get in the same way that either $e_1=0$ or $e_3=0$ and if $e_1=0$ (resp. $e_3=0$), then $e[i]_3=1$ (resp. $e[i]_1 =0$) for all $i$.

Assume for the moment $e_1>0$ and then we get $e_2 =e_3=0$, i.e. $C$ is the disjoint union of $s$ lines of multidegree $(1,0,0)$. Thus $\omega _{C_i}(1,0,0)$ has no non-zero section, a contradiction. Hence $e_1=0$,  $e[i]_2=e[i]_3= 1$ for all $i$, and $\omega _C(0,1,1)\cong \Oo _C$. Since $Y$ has multidegree $(2,4,4)$, we have $s\le 4$.
Assume $s=4$. We get that $D$ has multidegree $(2,0,0)$, i.e. that $D = D_1\sqcup D_2$ with each $D_i$ a line of multidegree $(1,0,0)$.
Since $\Ii _C(2,1,1)$ has not $D_j$ in the base locus, then $\deg (C\cap D_j) \le 2$. Since $C\cup D$ has $6$ irreducible components, each of them smooth and rational, we
get $p_a(Y) \le -1$, a contradiction.
\end{proof}

\begin{remark}\label{allspl}
Assume that $\Ee$ has no trivial factor. Then $\Ee$ is decomposable if and only if $\Ee$ is isomorphic to one of the following, up to reordering of the second and the third factors.
\begin{enumerate}
\item $\Oo_X(2,0,0)\oplus \Oo_X(0,1,1)$; $c_2(\Ee)=2t_1t_2+2t_1t_3$.
\item $\Oo_X(2,1,0)\oplus \Oo_X(0,0,1)$; $c_2(\Ee)=t_2t_3+2t_1t_3$.
\item $\Oo_X(1,1,1)\oplus \Oo_X(1,0,0)$; $c_2(\Ee)=t_2t_3+t_1t_3+t_1t_2$.
\item $\Oo_X(1,1,0)\oplus \Oo_X(1,0,1)$; $c_2(\Ee)=t_1t_2+t_1t_3+t$.
\item $\Oo_X(2,0,0)\oplus \Oo _X(0,1,0)\oplus \Oo_X(0,0,1)$; $c_2(\Ee)=t_2t_3+2t_1t_3+2t_1t_2$ and $c_3(\Ee)=2$.
\item $\Oo_X(1,1,0)\oplus \Oo _X(1,0,0)\oplus \Oo_X(0,0,1)$; $c_2(\Ee)=t_2t_3+2t_1t_3+t_1t_2$ and $c_3(\Ee)=1$.
\item $\Oo_X(1,0,0)^{\oplus 2} \oplus \Oo_X(0,1,1)$; $c_2(\Ee)=t_2t_3+2t_1t_3$ and $c_3(\Ee)=0$.
\item $\Oo_X(1,0,0)^{\oplus 2} \oplus \Oo_X(0,1,0)\oplus \Oo_X(0,0,1)$; $c_2(\Ee)=t_2t_3+2t_1t_3+2t_1t_2$ and $c_3(\Ee)=2$.
\end{enumerate}
In each case of $r=2$ except (1), the associated curve $C$ is a connected, normal and rational curve of multidegree $(e_1, e_2, e_3)$ with $c_2(\Ee)=e_1t_2t_3+e_2t_1t_3+e_3t_1t_2$. Note that
$$\dim \Ext^1(\Oo_X(2,0,0), \Oo_X(0,1,1))=h^1(\Oo_X(-2,1,1))=4.$$
So there are non-trivial extensions, which cannot be decomposable. In the other cases of $r=2$, such extensions are always trivial.
\end{remark}

\begin{example}\label{s1}
We have $\dim \Ext^1 (\Oo_X(2,0,0),\Oo _X(0,1,1)) =4$ and so we have a family $\{\Ee _\lambda \}$ of extensions of $\Oo _X(2,0,0)$ by $\Oo _X(0,1,1)$ with $\lambda\in \Ext^1 (\Oo_X(2,0,0), \Oo_X(0,1,1))$. Each extension shares the same Chern number and the same number of linearly independent sections. Any $\Ee_\lambda$ is isomorphic to $\Uu(0,0,-1)$, where $\Uu$ is an Ulrich bundle arising from an extension of $\Oo _X(2,0,1)$ by $\Oo _X(0,1,2)$ (see \cite{CFM0} Section 7). So if $\Ee=\Ee_{\lambda}$ with $\lambda\ne 0$, then $\Ee$ is indecomposable.

\quad {\emph {Claim 1}}: For $\Ee =\Ee_{\lambda}$ with $\lambda \ne 0$, we have $h^1(\Ee (t,t,t)) =0$ for all $t\in \ZZ$.

\quad {\emph {Proof of Claim 1}}: For every $t\in \ZZ$ we have $h^1(\Oo _X(2+t,t,t)) = h^1(\Oo _X(t,t+1,t+1)) =0$ and so $h^1(\Ee (t,t,t)) =0$.  \qed

\quad {\emph {Claim 2}}: For $\Ee =\Ee_{\lambda}$ with $\lambda \ne 0$, we have $h^2(\Ee (-2,-2,-2)) =1$ and $h^2(\Ee (t,t,t)) =0$ for all $t\ne -2$.

\quad {\emph {Proof of Claim 2}}: For every $t\ne -2$, we have $h^2(\Oo _X(2+t,t,t)) = h^2(\Oo _X(t,t+1,t+1)) =0$ and so $h^2(\Ee (t,t,t)) =0$. For $t=-2$, we have $h^2(\Oo _X(-2,-1,-1)) =0$, $h^3(\Oo _X(-2,-1,-1)) =0$ and $h^2(\Oo _X(0,-2,-2)) =1$. Hence we have $h^2(\Ee (-2,-2,-2))=1$. \qed

Fix any non-trivial extension $\Ee_{\lambda}$ of $\Oo _X(2,0,0)$ by $\Oo _X(0,1,1)$ and then $\Ee _\lambda$ is semistable, but not stable, with respect to the polarization $H=\Oo_X(1,1,1)$. Moreover the line bundles $\Oo _X(2,0,0)$ and $\Oo _X(0,1,1)$ appear in any H\"{o}lder-Schreier (or Harder-Narasimhan) decomposition of $\Ee _\lambda$ with respect to $H$-stable sheaves.

We also see that each automorphism of $\Ee _\lambda$ respect the extension defining $\Ee _\lambda$ and it implies that $\Ee _\lambda$ is simple. This is true also for the following reason: Since $h^0(\Ee _\lambda (-2,0,0)) = h^0(\Ee _\lambda (0,-2,0))
= h^0(\Ee _\lambda (0,0,-2)) =0$ and
\begin{align*}
\Oo _C(2,0,0)\cdot \Oo _X(2,1,1)\cdot \Oo _X(2,&1,1) = 4\\
 &< 8 = \Oo _X(0,1,1)\cdot \Oo _X(2,1,1) \cdot \Oo _X(2,1,1),
\end{align*}
$\Ee _\lambda$ is stable with respect to the polarization $\Oo _X(2,1,1)$.
\end{example}

\begin{lemma}\label{ss1}
Let $\Ee$ be a globally generated vector bundle of rank $r\ge 2$ on $X$ with $c_1=(2,1,1)$ and no trivial factor. Then $c_2(\Ee)=(0,2,2)$ if and only if we have either
\begin{itemize}
\item[(i)] $\Ee\cong \Ee_{\lambda}$ as in Example \ref{s1} or
\item[(ii)] $\Ee \cong \Oo_X(1,0,0)^{\oplus 2} \oplus \Oo_X(0,1,1)$.
\end{itemize}
\end{lemma}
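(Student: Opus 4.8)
The statement is an "if and only if", and one direction is already in hand: given the bundles in (i) and (ii), we read off $c_2$ directly. For (ii) this is the computation $c_2(\Oo_X(1,0,0)^{\oplus 2}\oplus\Oo_X(0,1,1)) = t_2t_3+2t_1t_3 = (0,2,2)$ already recorded in Remark \ref{allspl}(7), and for (i) it is the computation of Remark \ref{allspl}(1), since $\Ee_\lambda$ is an extension of $\Oo_X(2,0,0)$ by $\Oo_X(0,1,1)$ and extensions preserve Chern classes. So the work is entirely in the forward direction: assume $\Ee$ is globally generated with no trivial factor, $c_1=(2,1,1)$, $c_2=(0,2,2)$, and identify $\Ee$.

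The plan is to analyze the associated curve $C$ from the sequence (\ref{equ+}). Here $C$ has multidegree $(e_1,e_2,e_3)=(0,2,2)$, so $e_1=0$. First I would split into cases on the number $s$ of connected components. Since $e_1 = 0$, the projection $\pi_1$ is constant on each component; combined with $\pi_{12|_C}$ being an embedding (Lemma \ref{f1}), one sees that $C$ lies in a fiber $\{o_1\}\times\PP^1\times\PP^1$ of $\pi_1$ for some $o_1\in\PP^1$ — or at least each component does — so $C$ is essentially a curve of bidegree $(2,2)$ on a quadric surface $Q$. When $s=1$, $C$ is a smooth connected curve of bidegree $(2,2)$ on $Q$, hence an elliptic curve (arithmetic genus $1$) of degree $4$; here $\omega_C\cong\Oo_C$, so $\omega_C(0,1,1)$ restricted to the $\PP^1\times\PP^1$ is $\Oo_C(1,1)$ which has degree $4$, giving $h^0(\omega_C(0,1,1))=4$, and by Remark \ref{24nov} the possible ranks are $2\le r\le 5$. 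The rank $2$ case is exactly the Hartshorne-Serre bundle attached to an elliptic quartic on a fixed quadric; I need to match it with $\Ee_\lambda$, using that $\Ee_\lambda$ from Example \ref{s1} has $c_1=(2,1,1)$, $c_2=(0,2,2)$, is globally generated (being an extension of two globally generated line bundles with the right $H^1$-vanishing), is indecomposable, and — via Example \ref{s1}'s claims — has the cohomology forcing its associated curve to be exactly such an elliptic quartic; uniqueness of the Hartshorne-Serre bundle for fixed $C$ and rank then gives $\Ee\cong\Ee_\lambda$. When $s\ge 2$, Lemma \ref{salto1} applies: $e_1=0$, $e[i]_2=e[i]_3=1$ for all $i$, $\omega_C(0,1,1)\cong\Oo_C$, and $2\le s\le 3$; but $e_2=e_3=2$ forces $s=2$, so $C$ is a disjoint union of two conics each of multidegree $(0,1,1)$, lying on a common quadric $\{o_1\}\times\PP^1\times\PP^1$. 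Then $\omega_C(0,1,1)$ is trivial and $h^0=s=2$, so by Remark \ref{24nov} the only rank is $r=2$; I would then show the resulting bundle is decomposable (the two sections of $\Ii_C(1,0,0)$ — or rather of a suitable twist — splitting off the $\Oo_X(1,0,0)^{\oplus 2}$ summand, analogous to the argument in step (c) of the proof of Proposition \ref{a2.1++}) and equals $\Oo_X(1,0,0)^{\oplus 2}\oplus\Oo_X(0,1,1)$, which is case (ii). One must also treat higher rank in the $s=1$ case: for $3\le r\le 5$, the bundle is a higher-rank Hartshorne-Serre bundle on the elliptic quartic, and I should check whether any of these can have $c_1=(2,1,1)$ with no trivial factor — in fact $c_3$ is nonzero in general, and these fall outside the scope of this lemma's list, so I expect the $s=1$, $r\ge 3$ curves either not to arise or to be covered elsewhere; the lemma as stated only characterizes $r=2$ plus the one split bundle $\Oo_X(1,0,0)^{\oplus 2}\oplus\Oo_X(0,1,1)$, so I would argue that for $r\ge 3$ the no-trivial-factor condition together with $c_2=(0,2,2)$ forces $\Ee$ into case (ii).

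The main obstacle is the $s=1$, $r=2$ identification: showing that the abstract Hartshorne-Serre bundle attached to an elliptic quartic curve $C\subset\{o_1\}\times\PP^1\times\PP^1$ is precisely the extension $\Ee_\lambda$ of Example \ref{s1}, and in particular that it is globally generated and indecomposable. The cleanest route is to run the extension argument in reverse: from $h^0(\Oo_C(0,1,1))=4>3$ deduce $h^0(\Ii_C(0,1,1))>0$ wait — rather, use that $C$ lies on the quadric $\{o_1\}\times\PP^1\times\PP^1$ to get a section of $\Ii_C(1,0,0)$; but here $e_1=0$ means such a section need not exist as a hypersurface of type $(1,0,0)$ vanishing on $C$... so instead I produce the map $\Oo_X(0,1,1)\hookrightarrow\Ee$ from a section of $\Ee(0,-1,-1)$, coming from $h^0(\Ii_C(0,-1,-1)+\text{something})$, and identify the quotient with $\Oo_X(2,0,0)$ by checking it is reflexive of rank $1$ with the correct $c_1$ and is actually a line bundle because $C$ is a Cartier divisor on the smooth surface $\{o_1\}\times\PP^1\times\PP^1$. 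Then $\Ee$ is an extension of $\Oo_X(2,0,0)$ by $\Oo_X(0,1,1)$, it is non-split because $\Ee$ is globally generated while $\Oo_X(0,1,1)\oplus\Oo_X(2,0,0)$... is also globally generated, so non-splitness must instead come from indecomposability, which I get from the $H$-stability computation in Example \ref{s1} or from $h^0(\Ee(-2,0,0))=h^0(\Ii_C(-2,1,1))=0$. Matching it with the specific $\Ee_\lambda$ then follows since all non-trivial $\lambda$ give isomorphic... no — they give possibly non-isomorphic bundles but all are denoted $\Ee_\lambda$ collectively in the statement, so "$\Ee\cong\Ee_\lambda$ as in Example \ref{s1}" just means $\Ee$ is one of these extensions, which is what I have shown.
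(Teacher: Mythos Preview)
Your case split on $s$ is wrong, and this sends the rest of the argument off course. You claim that $s=1$ gives a connected elliptic quartic $C=\{o\}\times C'$ with $C'\in|\Oo_{\PP^1\times\PP^1}(2,2)|$, and that this produces the bundles $\Ee_\lambda$ of case (i). But such a curve can never be the associated curve of a globally generated $\Ee$ with $c_1=(2,1,1)$: on the fiber $\{o\}\times\PP^1\times\PP^1$ the line bundle $\Oo_X(2,1,1)$ restricts to $\Oo_{\PP^1\times\PP^1}(1,1)$, and no section of $\Oo_{\PP^1\times\PP^1}(1,1)$ vanishes on a $(2,2)$-curve. Hence $\Ii_C(2,1,1)$ has the entire fiber in its base locus. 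The paper's proof makes exactly this observation: since $\Ii_C(2,1,1)$ is spanned, one must have $e[i]_2\le 1$ and $e[i]_3\le 1$ for every component. Combined with the fact that no component is a line (because $\omega_C(0,1,1)$ is spanned), this forces $s=2$ with each $C_i$ a smooth conic of multidegree $(0,1,1)$.

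So both cases (i) and (ii) arise from the \emph{same} curve configuration ($s=2$, two disjoint conics), distinguished only by rank. You also misread Remark \ref{24nov}: with $\omega_C(0,1,1)\cong\Oo_C$ and $s=2$, the allowed ranks are $2\le r\le s+1=3$, not just $r=2$. The paper then handles $r=2$ by producing a map $\Oo_X(0,1,1)\to\Ee$ (from $h^0(\Oo_X(2,0,0))>h^0(\Oo_C(2,0,0))$) and checking the cokernel is $\Oo_X(2,0,0)$, placing $\Ee$ among the $\Ee_\lambda$. For $r=3$ it again embeds $\Oo_X(0,1,1)$, shows the rank-$2$ cokernel $\Ff$ is torsion-free with $c_1(\Ff)=(2,0,0)$, and uses a further map $\Oo_X(1,0,0)^{\oplus 2}\to\Ff$ (coming from $h^0(\Ii_C(1,1,1))=2$) to identify $\Ff\cong\Oo_X(1,0,0)^{\oplus 2}$; the extension then splits since $h^1(\Oo_X(-1,1,1))=0$. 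Your plan to find $\Oo_X(1,0,0)^{\oplus 2}$ as a summand is the right instinct for case (ii), but it belongs to $r=3$, not to a separate $s=2$ scenario.
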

\begin{proof}
Since the ``~if~" part is obvious, so we only need to check the ``~only if~" part. Take $C$ associated to $\Ee$. Since $C$ has multidegree $(0,2,2)$, for each connected component $C_i$ of $C$, there are $o_i\in \PP^1$ and $C'_i\in |\Oo _{\PP^1\times \PP^1}(e[i]_3,e[i]_2)|$ such that
$C_i = \{o_i\}\times C'_i$. Since $\Ii_C(2,1,1)$ is spanned, we get $e[i]_2\le 1$ and $e[i]_3\le 1$ for all $i$. Since $\omega _C(0,1,1)$ is spanned,
no connected component of $C$ is a line. Hence $s=2$ and each $C_i$ is smooth and rational with $(e[i]_1, e[i]_2, e[i]_3)=(0,1,1)$ for $i=1,2$. In particular we have $\omega_C(0,1,1)\cong \Oo_C$ and so we have $r\le 3$.

Assume $r=2$. Since $h^0(\Oo _X(2,0,0)) =3 > 2 =h^0(\Oo _C(2,0,0))$, we get the existence of a non-zero map $f: \Oo  _X(0,1,1)\to \Ee$. Since $h^0(\Ii _C(1,0,0)) = h^0(\Ii _C(2,-1,0)) =h^0(\Ii_C(2,0,-1)) =0$, so $f$ induces an exact sequence
$$0\to \Oo _X(0,1,1) \to \Ee \to \Ii _T(2,0,0)\to 0$$
with $T$ a locally complete intersection $0$-dimensional subscheme of codimension $2$. Since $c_2(\Ee )=2t_1t_3+2t_1t_2$, we get $T= \emptyset$ and so $\Ee \cong \Ee_{\lambda}$ as in Example \ref{s1}.

Assume $r=3$. We have $h^0(\Ee )=2+h^0(\Ii _C(2,1,1)) = 8$. As in the case $r=2$, we get an injective map $f: \Oo_X(0,1,1) \to \Ee$ with globally generated cokernel $\Ff:=\mathrm{coker}(f)$. From $h^1(\Oo _X(0,1,1)) =0$, we get $h^0(\Ff )=4$. Note that $h^0(\Ee(-1,-1,-1))=h^0(\Ee(0,-2,-1))=h^0(\Ee(0,-1,-2))=0$ and so $\Ff$ is torsion-free. Since $h^0(\Ii _C(1,1,1)) = 2$ and $h^1(\Oo _X(-1,0,0)) =0$, we first get
an injective map $\Oo _X(1,0,0)^{\oplus 2} \to \Ee$ and then a non-zero map $w: \Oo _X(1,0,0)^{\oplus 2} \to \Ff$. First assume that $w$ is injective. Since
$c_1(\Ff )=(2,0,0)$, $w$ is an isomorphism (even if a priori $\Ff$ is only torsion-free). Since $h^1(\Oo_X(-1,1,1))=0$, so the extension induced by $f$ is trivial and $\Ee \cong \Oo_X(1,0,0)^{\oplus 2} \oplus \Oo_X(0,1,1)$. Now assume that $w$ is not injective. We claim that $\mathrm{Im}(w) =\Oo _X(2,0,0)$. Indeed, $\mathrm{Im}(w)$ is a spanned torsion-free sheaf of rank $1$ with $c_1=(x,b,c)$, $b\ge 0$, $c\ge 0$. Since $\mathrm{Im}(w) \subset \Ff$ and $\Ff/\mathrm{Im}(w)$ is spanned, we have
$x\le 2$, $b\le 0$ and $c\le 0$. Since $\mathrm{Im}(w) \ne \Oo _X(1,0,0)$, we get
$x=2$ and $b=c=0$. Since $\Ff$ is spanned, $h^0(\Ff )=4$, $\Ff/\mathrm{Im}(w)$ is spanned
by the cokernel of the injection $H^0(\mathrm{Im}(w)) \to H^0(\Ff)$ and $\Ff$ has no trivial factor, we get
a contradiction.
\end{proof}

\begin{remark}
For any non-trivial extension $\Ee_{\lambda}$ in Example \ref{s1}, we can compute $h^1(\Ee_{\lambda}^\vee)=1$ and so we have a unique non-trivial extension $\Gg_{\lambda}$ of $\Ee_{\lambda}$ by $\Oo_X$. By Lemma \ref{ss1} we get
$\Gg\cong\Oo_X(1,0,0)^{\oplus 2} \oplus \Oo_X(0,1,1)$ for any $\lambda \ne 0$.
\end{remark}

\begin{proposition}\label{sss2}
Let $\Ee$ be a globally generated vector bundle of rank $r\ge 2$ on $X$ with $c_1(\Ee)=(2,1,1)$, multidegree $(1,2,0)$ and no trivial factor. Then we have
$$\Ee \cong \Oo_X(2,1,0)\oplus \Oo_X(0,0,1).$$
\end{proposition}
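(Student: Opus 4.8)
The plan is to run the Hartshorne--Serre sequence (\ref{equ+}) in reverse: first pin down the associated curve $C$, deduce that the rank must be $2$, then exhibit the sub-line bundle $\Oo_X(0,0,1)\subset\Ee$ and split the resulting extension. From (\ref{equ+}) we get a smooth curve $C$ of multidegree $(e_1,e_2,e_3)=c_2(\Ee)=(1,2,0)$, nonempty since $\Ee$ has no trivial factor. As $e_3=0$, the map $\pi_3|_C$ is constant on each connected component, so $C$ lies in fibers of $\pi_3$. By Lemma \ref{f1} the map $\pi_{12|_C}$ is an embedding, and since $e_1=1\neq 0$, Lemma \ref{salto1} forces $s=1$: thus $C$ is connected and $C':=\pi_{12}(C)$ is a smooth curve in $\PP^1\times\PP^1$ with $\deg(\Oo_{C'}(1,0))=e_1=1$ and $\deg(\Oo_{C'}(0,1))=e_2=2$, hence $C'\in|\Oo_{\PP^1\times\PP^1}(2,1)|$ and $C\cong C'\cong\PP^1$. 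Since $\deg\Oo_C(0,-1,-1)=-(e_2+e_3)=-2=\deg\omega_C$ and $C$ is rational, $\omega_C\cong\Oo_C(0,-1,-1)$, so $\omega_C(0,1,1)\cong\Oo_C$ and $h^0(\omega_C(0,1,1))=1=s$. By Remark \ref{24nov} any bundle with no trivial factor associated to $C$ has rank at most $s+1=2$, so $r=2$ and (\ref{equ+}) reads $0\to\Oo_X\to\Ee\to\Ii_C(2,1,1)\to 0$.

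Next I would locate the summand $\Oo_X(0,0,1)$. Twisting the sequence by $\Oo_X(0,0,-1)$ and using $h^0(\Oo_X(0,0,-1))=h^1(\Oo_X(0,0,-1))=0$ gives $H^0(\Ee(0,0,-1))\cong H^0(\Ii_C(2,1,0))$. To compute the latter, note $\Oo_X(2,1,0)=\pi_{12}^\ast\Oo_{\PP^1\times\PP^1}(2,1)$ and $C$ maps isomorphically onto $C'\in|\Oo_{\PP^1\times\PP^1}(2,1)|$, so the restriction map factors through $H^0(\Oo_{\PP^1\times\PP^1}(2,1))\to H^0(\Oo_{C'}(2,1))$; its kernel is $H^0(\Ii_{C'/\PP^1\times\PP^1}(2,1))=H^0(\Oo_{\PP^1\times\PP^1})$, and since $h^1(\Oo_{\PP^1\times\PP^1})=0$ it is surjective. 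Hence $h^0(\Ii_C(2,1,0))=1$, so $h^0(\Ee(0,0,-1))=1$ and there is, up to scalar, a unique nonzero map $u\colon\Oo_X(0,0,1)\to\Ee$; as $X$ is integral, $u$ is injective with $\op{Im}(u)\cong\Oo_X(0,0,1)$.

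Then I would show $u$ is saturated and split. Let $L=\Oo_X(b_1,b_2,b_3)$ be the saturation of $\op{Im}(u)$ in $\Ee$ (a line bundle, being a saturated rank-one subsheaf of a locally free sheaf on a smooth threefold); the inclusion $\Oo_X(0,0,1)\subset L$ forces $(b_1,b_2,b_3)\ge(0,0,1)$, and $u$ arises from a nonzero section of $L(0,0,-1)=\Oo_X(b_1,b_2,b_3-1)$, so $1\le h^0(\Oo_X(b_1,b_2,b_3-1))\le h^0(\Ee(0,0,-1))=1$. Since all three exponents are $\ge 0$ this gives $(b_1,b_2,b_3)=(0,0,1)$, i.e. $\op{Im}(u)=L$ is saturated. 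Therefore $\Ee/\op{Im}(u)=\Ii_W(2,1,0)$ for a subscheme $W$ of codimension at least $2$; comparing second Chern classes in $0\to\Oo_X(0,0,1)\to\Ee\to\Ii_W(2,1,0)\to 0$ yields $[W]=c_2(\Ee)-(0,0,1)\cdot(2,1,0)=(1,2,0)-(1,2,0)=0$, so $W=\emptyset$ and $\Ee/\op{Im}(u)\cong\Oo_X(2,1,0)$. Finally $\op{Ext}^1(\Oo_X(2,1,0),\Oo_X(0,0,1))=H^1(\Oo_X(-2,-1,1))=0$ by the Künneth formula (the $\Oo_{\PP^1}(-1)$-factor has vanishing cohomology, killing every summand), so the extension splits and $\Ee\cong\Oo_X(2,1,0)\oplus\Oo_X(0,0,1)$.

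The step I expect to be the main obstacle is the exact evaluation $h^0(\Ii_C(2,1,0))=1$ in the second paragraph: merely knowing this is $\ge 1$ only produces the map $u$, whereas the rigidity ($=1$) is precisely what forces $L=\Oo_X(0,0,1)$ to be saturated and hence pins the bundle down. Everything else is Chern-class bookkeeping plus the single Ext vanishing; the reduction to $r=2$ is painless once one reads $\omega_C\cong\Oo_C(0,-1,-1)$ off the degree on $C\cong\PP^1$.
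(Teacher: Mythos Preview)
Your proof is correct and follows essentially the same template as the paper's: identify the curve $C$, force $r=2$ via $\omega_C(0,1,1)\cong\Oo_C$, embed one of the two expected summands as a saturated sub-line bundle, kill the residual ideal by a $c_2$ count, and split by an $\Ext^1$ vanishing.

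The only genuine difference is which summand you embed first. The paper computes $h^0(\Ii_C(0,0,1))\ge 1$ from $h^0(\Oo_X(0,0,1))=2>1=h^0(\Oo_C(0,0,1))$, obtaining a map $\Oo_X(2,1,0)\to\Ee$, and checks saturation via the three vanishings $h^0(\Ii_C(-1,0,1))=h^0(\Ii_C(0,-1,1))=h^0(\Ii_C)=0$; it then splits using $h^1(\Oo_X(2,1,-1))=0$. You instead compute $h^0(\Ii_C(2,1,0))=1$ exactly (using that $C'\in|\Oo_{\PP^1\times\PP^1}(2,1)|$), embed $\Oo_X(0,0,1)$, and use the \emph{exact} value $h^0(\Ee(0,0,-1))=1$ to force the saturation $L$ to equal $\Oo_X(0,0,1)$; your splitting uses $h^1(\Oo_X(-2,-1,1))=0$. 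Your saturation argument is arguably cleaner, since a single equality replaces three separate vanishing checks, and your reduction to $s=1$ via Lemma~\ref{salto1} (rather than the paper's ``degree $3$, no line components'' count) is also legitimate. Both routes are minor variants of the same idea.
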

\begin{proof}
Since no connected component of the associated curve $C$ is a line and $\deg ({C}) =3$, then $s=1$.
Since $\Ii _C(2,2,2)$ is spanned, $C$ is not a plane cubic.
Hence $C$ is a connected and rational curve of degree $3$. Since $\omega_C(0,1,1) \cong \Oo_C$ and $s=1$, we have $r=2$ (see Remark \ref{24nov}). Since $h^0(\Oo_X(0,0,1))=2=h^0(\Oo_C(0,0,1))+1$, so we have $h^0(\Ee(-2,-1))=1$. Since $h^0(\Ii_C(-1,0,1))=h^0(\Ii_C(0,-1,1))=h^0(\Ii_C))=0$, so we get an exact sequence
$$0\to \Oo_X(2,1,0) \to \Ee \to \Ii_T(0,0,1)\to 0$$
with $T$ a locally complete intersection $0$-dimensional subscheme of codimension $2$. Since $c_2(\Oo_X(2,1,0)\oplus \Oo_X(0,0,1))=(1,2,0)$, so we have $T=\emptyset$. Since $h^1(\Oo_X(2,1,-1))=0$, so the extension is trivial.
\end{proof}

\begin{lemma}\label{sss2}
Let $\Ee$ be a globally generated vector bundle of rank $r\ge 2$ on $X$ with no trivial factor,
$c_1(\Ee)=(2,1,1)$ and multidegree $(0,e_2,e_3)$ with $e_2+e_3\le 2$. Then we have $\Ee \cong \Oo_X(1,1,1)\oplus \Oo_X(1,0,0)$.
\end{lemma}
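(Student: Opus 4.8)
The plan is to show that the curve $C$ associated to $\Ee$ through (\ref{equ+}) must be a smooth conic with a single connected component and multidegree $(0,1,1)$, that $r=2$, and then to realize $\Oo_X(1,1,1)$ as a \emph{saturated} sub-line-bundle of $\Ee$ with quotient $\Oo_X(1,0,0)$; the resulting short exact sequence will split for cohomological reasons. Since $\Ee$ has no trivial factor, $C\neq\emptyset$, and $\deg(C)=e_1+e_2+e_3=e_2+e_3\le 2$. As recalled at the beginning of this section, no connected component $C_i$ is a line (because $\omega_{C_i}(0,1,1)$ is spanned), so every component has degree at least $2$; hence $s=1$ and $\deg(C)=2$. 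By Lemma \ref{f1} the map $\pi_{12}|_C$ is an embedding, and since $e_1=\deg\Oo_C(1,0,0)=0$ its image lies in a single fibre $\{o\}\times\PP^1$ of the first projection; therefore $C\cong\PP^1$ and $e_2=1$, and likewise $e_3=1$ via $\pi_{13}|_C$. So the multidegree of $C$ is $(0,1,1)$; the line bundles $\omega_C$ and $\Oo_C(0,-1,-1)$ both have degree $-2$ on $C\cong\PP^1$, hence are isomorphic, and $\omega_C(0,1,1)\cong\Oo_C$. Remark \ref{24nov} then gives $2\le r\le s+1=2$, so $r=2$ and (\ref{equ+}) reads $0\to\Oo_X\to\Ee\to\Ii_C(2,1,1)\to 0$.

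Next I produce a nonzero morphism $v\colon\Oo_X(1,1,1)\to\Ee$. Since $e_1=0$ and $C$ is connected, $C$ lies in a divisor $\pi_1^{-1}(o)\in|\Oo_X(1,0,0)|$, so $h^0(\Ii_C(1,0,0))\ge 1$. Twisting (\ref{equ+}) by $\Oo_X(-1,-1,-1)$ and using $h^0(\Oo_X(-1,-1,-1))=h^1(\Oo_X(-1,-1,-1))=0$ gives $h^0(\Ee(-1,-1,-1))=h^0(\Ii_C(1,0,0))\ge 1$; hence $v$ exists, and it is injective since $\Ee$ is torsion-free.

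Now I argue $v$ is saturated. Let $L\subseteq\Ee$ be the saturation of $v(\Oo_X(1,1,1))$; as $X$ is smooth and $\Ee$ locally free, $L$ is a line bundle and $\Ee/L$ is torsion-free of rank $1$. Writing $L=\Oo_X(a,b,c)$, the inclusion $\Oo_X(1,1,1)\hookrightarrow L$ forces $a,b,c\ge 1$, while $c_1(\Ee/L)=(2-a,1-b,1-c)\ge 0$ forces $b=c=1$ and $a\in\{1,2\}$. If $a=2$ then $h^0(\Ee(-2,-1,-1))>0$, but twisting (\ref{equ+}) by $\Oo_X(-2,-1,-1)$ gives $h^0(\Ee(-2,-1,-1))\le h^0(\Oo_X(-2,-1,-1))+h^0(\Ii_C)=0$, a contradiction; hence $L=\Oo_X(1,1,1)$. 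Consequently $\Ee/v(\Oo_X(1,1,1))$ is a torsion-free rank-$1$ sheaf on the locally factorial threefold $X$ with first Chern class $(2,1,1)-(1,1,1)=(1,0,0)$, i.e. it is $\Ii_W(1,0,0)$ for a closed subscheme $W\subset X$ of pure codimension $2$ (or $W=\emptyset$). Comparing second Chern classes, $c_2(\Ee)=c_1(\Oo_X(1,1,1))\cdot c_1(\Oo_X(1,0,0))+[W]=(0,1,1)+[W]$; since $c_2(\Ee)=(0,1,1)$, the effective class $[W]$ vanishes, so $W=\emptyset$. We thus have an exact sequence $0\to\Oo_X(1,1,1)\to\Ee\to\Oo_X(1,0,0)\to 0$ whose class lies in $\Ext^1(\Oo_X(1,0,0),\Oo_X(1,1,1))=H^1(X,\Oo_X(0,1,1))=0$ by the Künneth formula; it therefore splits, and $\Ee\cong\Oo_X(1,1,1)\oplus\Oo_X(1,0,0)$.

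The only delicate point is the saturation step: a priori the sub-line-bundle given by $v$ could factor through $\Oo_X(2,1,1)$, and ruling this out is exactly where the vanishing $h^0(\Ee(-2,-1,-1))=0$ (equivalently $h^0(\Ii_C)=0$) enters. Everything else reduces to the case $r=2$ via Remark \ref{24nov} together with routine Chern-class computations in $A(X)\cong\ZZ[t_1,t_2,t_3]/(t_1^2,t_2^2,t_3^2)$ and Künneth vanishings.
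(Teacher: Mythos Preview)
Your proof is correct and follows essentially the same route as the paper: reduce to $s=1$ and a conic of multidegree $(0,1,1)$, force $r=2$ via Remark \ref{24nov}, exhibit $\Oo_X(1,1,1)\hookrightarrow\Ee$, show the cokernel is $\Oo_X(1,0,0)$, and split the extension using $h^1(\Oo_X(0,1,1))=0$. The only cosmetic difference is in the saturation step: the paper rules out a larger saturation by checking the three vanishings $h^0(\Ii_C(0,0,0))=h^0(\Ii_C(1,-1,0))=h^0(\Ii_C(1,0,-1))=0$, whereas you use that $\Ee/L$ is globally generated to bound $b,c\le 1$ and then only need $h^0(\Ii_C)=0$ to exclude $a=2$. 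You might make explicit that $c_1(\Ee/L)\ge 0$ follows from $\Ee/L$ being a globally generated torsion-free rank-one sheaf (hence admitting a nonzero section of its reflexive hull), but this is standard.
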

\begin{proof}
Since $\deg ({C}) = e_1+e_2+e_3 \le 2$ and no connected component of $C$ is a line,
then $s=1$ and $C$ is a smooth conic. Since $\omega_C(0,1,1)\cong \Oo_C$ and $s=1$, then $r=2$ (Remark \ref{24nov}).

Fix any smooth conic $C$ with $(e_1, e_2, e_3)=(0,1,1)$. There are $p\in \PP^1$ and $C'\in |\Oo _{\PP^1\times \PP^1}(1,1)|$ such that $C = \{p\}\times C'$. Since $\Ii _{C',\PP^\times \PP^1}(1,1)$ is globally generated, so is $\Ii _C(2,1,1)$. Since $\Oo _C(1,0,0) \cong \Oo _C$ and $h^0(\Oo _X(1,0,0)) =2$, we have $h^0(\Ee(-1,-1,-1))=h^0(\Ii _C(1,0,0)) >0$ and so we can pick a non-zero map $f: \Oo _X(1,1,1) \to \Ee$. Since $h^0(\Ii _C(0,0,0)) = h^0(\Ii _C(1,-1,0)) = h^0(\Ii _C(1,0,-1))=0$, we see that $f$ induces an exact sequence
$$0\to \Oo _X(1,1,1) \to \Ee \to \Ii _T(1,0,0)\to 0$$ with $T$ a locally complete intersection curve. Since $(t_1+t_2+t_3)t_1 =t_1t_2+t_1t_3$ and $c_2(\Ee ) =(0,1,1)$,
we get $T=\emptyset$. Since $h^1(\Oo _X(0,1,1)) =0$, we get $\Ee \cong \Oo _X(1,1,1)\oplus \Oo _X(1,0,0)$.
\end{proof}

\begin{lemma}\label{sss3}
Let $\Ee$ be a globally generated vector bundle of rank $r\ge 2$ on $X$ with $c_1(\Ee)=(2,1,1)$ and no trivial factor. If the associated curve $C$ is a curve of multidegree $(e_1, e_2, e_3)=(1,1,1)$, then we have $\Ee \cong \Oo_X(1,1,0)\oplus \Oo_X(1,0,1)$.
\end{lemma}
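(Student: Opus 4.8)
The plan is to reduce to the case $r=2$ with $C$ a connected rational curve, then realize $\Oo_X(1,1,0)$ as a saturated sub-line-bundle of $\Ee$, identify the quotient, and check that the resulting extension splits. For the reduction: since $\Ee$ has no trivial factor, $C\ne\emptyset$; no component of $C$ is a line because $\omega_{C_i}(0,1,1)$ is globally generated whereas a line $\ell\subset X$ has $\deg\omega_\ell(0,1,1)\le -1$, so every component has degree $\ge 2$, and as $\deg C=e_1+e_2+e_3=3$ this forces $s=1$, i.e. $C$ is connected. Each $\pi_i|_C\colon C\to\PP^1$ is a nonconstant morphism of degree $e_i=1$ between smooth projective curves, hence an isomorphism; thus $C\cong\PP^1$ and $\Oo_C(a,b,c)\cong\Oo_{\PP^1}(a+b+c)$ for all $(a,b,c)$. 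In particular $\omega_C\cong\Oo_C(0,-1,-1)=\Oo_C(c_1-c_1(X))$ and $h^0(\omega_C(c_1(X)-c_1))=h^0(\Oo_C)=1=s$, so Remark \ref{24nov} gives $2\le r\le s+1=2$, i.e. $r=2$.

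Next I would produce the sub-line-bundle. Twisting \eqref{equ+} by $\Oo_X(-1,-1,0)$ and using $h^0(\Oo_X(-1,-1,0))=0$ gives $h^0(\Ee(-1,-1,0))=h^0(\Ii_C(1,0,1))\ge h^0(\Oo_X(1,0,1))-h^0(\Oo_C(1,0,1))=4-3=1$, so there is a nonzero---hence injective, as $\Ee$ is torsion-free---morphism $f\colon\Oo_X(1,1,0)\to\Ee$. Let $\mathcal L=\Oo_X(a,b,c)$ be the saturation of $\Image f$ in $\Ee$, a line bundle since $X$ is smooth. From $\Oo_X(1,1,0)\hookrightarrow\mathcal L$ we get $(a,b,c)\ge(1,1,0)$, and since $\Ee/\mathcal L$ is a globally generated torsion-free sheaf of rank $1$ its first Chern class $(2-a,1-b,1-c)$ is effective, so $(a,b,c)\le(2,1,1)$; hence $(a,b,c)\in\{(1,1,0),(1,1,1),(2,1,0),(2,1,1)\}$. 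For the last three, $h^0(\Ee(-a,-b,-c))=h^0(\Ii_C(2-a,1-b,1-c))$ equals $h^0(\Ii_C)$, $h^0(\Ii_C(0,0,1))$, or $h^0(\Ii_C(1,0,0))$; the first vanishes since $C\ne\emptyset$, and the other two vanish because a section of $\pi_3^*\Oo_{\PP^1}(1)$, resp. $\pi_1^*\Oo_{\PP^1}(1)$, vanishing on $C$ vanishes on $\pi_3(C)=\PP^1$, resp. $\pi_1(C)=\PP^1$. This contradicts $h^0(\Ee(-a,-b,-c))\ne 0$, so $(a,b,c)=(1,1,0)$ and $\Image f=\mathcal L\cong\Oo_X(1,1,0)$ is saturated in $\Ee$.

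It then remains to identify the quotient and split. The quotient $\Qq:=\Ee/\Oo_X(1,1,0)$ is torsion-free of rank $1$, and from its two-term locally free resolution $0\to\Oo_X(1,1,0)\to\Ee\to\Qq\to 0$ it has projective dimension $\le 1$; hence, by Auslander--Buchsbaum, $\Qq\cong\Ii_Z(1,0,1)$ with $Z$ either empty or a pure $1$-dimensional subscheme. Comparing second Chern classes, $c_2(\Ee)=c_1(\Oo_X(1,1,0))\,c_1(\Oo_X(1,0,1))+[Z]=(t_1+t_2)(t_1+t_3)+[Z]$; since $c_2(\Ee)$ is the multidegree $(1,1,1)$ of $C$ and $(t_1+t_2)(t_1+t_3)=t_1t_2+t_1t_3+t_2t_3$ already equals $(1,1,1)$, we get $[Z]=0$, and an effective $1$-cycle of class $0$ is empty, so $\Qq\cong\Oo_X(1,0,1)$. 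Finally $\Ext^1(\Oo_X(1,0,1),\Oo_X(1,1,0))=H^1(X,\Oo_X(0,1,-1))=0$ because $H^\bullet(\PP^1,\Oo_{\PP^1}(-1))=0$, so $0\to\Oo_X(1,1,0)\to\Ee\to\Oo_X(1,0,1)\to 0$ splits and $\Ee\cong\Oo_X(1,1,0)\oplus\Oo_X(1,0,1)$.

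The only step that needs genuine care is the determination of the saturated sub-line-bundle, i.e. excluding the twists $(1,1,1),(2,1,0),(2,1,1)$; this rests on the vanishings $h^0(\Ii_C(0,0,1))=h^0(\Ii_C(1,0,0))=0$, which follow because $C\cong\PP^1$ surjects onto each factor. Everything else is routine Hartshorne--Serre bookkeeping of the kind already used in the preceding lemmas.
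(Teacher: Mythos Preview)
Your proof is correct and follows essentially the same strategy as the paper: reduce to $r=2$ via Remark~\ref{24nov}, find a sub-line-bundle from a section count, check its saturation, identify the quotient by a $c_2$ computation, and split using a K\"unneth vanishing. The only cosmetic difference is that the paper takes $\Oo_X(1,0,1)$ as the sub-line-bundle (using $h^0(\Ii_C(1,1,0))>0$) while you take $\Oo_X(1,1,0)$ (using $h^0(\Ii_C(1,0,1))>0$); by the symmetry in the last two factors these are equivalent, and your saturation check is in fact more carefully justified than the paper's.
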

\begin{proof}
Lemma \ref{salto1} gives $s=1$. Since $\omega_C(0,1,1)$ is trivial and $s=1$, so we have $r=2$. Since $\deg (\Oo_C(1,1,0))+1=4=h^0(\Oo_X(1,1,0)$, so we have $h^0(\Ee(-1,0,-1))=h^0(\Ii_C(1,1,0))=1$. Thus $\Ee$ fits in an exact sequence
$$0\to \Oo_X(1,0,1) \to \Ee \to \Ii_T(1,1,0) \to 0$$
with either $T=\emptyset$ or $T$ a locally complete intersection curve. Since $(t_1+t_2)(t_1+t_3)=t_1t_2+t_2t_3+t_3t_1 =c_2(\Ee) $, so we have $T=\emptyset$. The vanishing of $H^1(\Oo_X(0,-1,1))$ implies that the extension is trivial.
\end{proof}

\begin{example}\label{s3}
Let $\Gg=\pi_{23}^*(\phi_p^*T\PP^2(-1))$, where $\phi_p : Q \to \PP^2$ is the linear projection with the center $p\in \PP^3 \setminus Q$. We have $\dim \Ext^1 (\Oo_X(2,0,0),\Gg)= h^0(\Gg) =3$ and so we have a family $\{\Ee _\lambda \}$ of non-trivial extensions of $\Oo _X(2,0,0)$ by $\Gg$ with
$c_1=(2,1,1)$ and $c_2=(0,3,3)$. The bundles $\Ee_\lambda$ do not split because $h^0(\Ee_\lambda)=6$ and there are no bundles of rank $3$ in the list of Remark \ref{allspl} with $6$ sections. Lemma \ref{salto1} gives $s=3$ and hence $h^0(\Ee_{\lambda}^\vee)=1$ and so we have a unique non-trivial extension $\Ff$ between $\Oo_X$ and $\Ee_{\lambda}$. Taking the counter-image $\Aa$ by the subbundle $\Oo _X(2,0,0)$ of $\Gg$, we get that $\Ff$ is an extension of $\Gg$ by $\Aa$. Since $\Aa$ is an extension of $\Oo _X(2,0,0)$ by $\Oo_X$, we have either $\Aa \cong \Oo _X(1,0,0)^{\oplus 2}$ or $\Aa \cong \Oo _X\oplus \Oo _X(2,0,0)$. In the first case we get $\Ff \cong \Oo _X(1,0,0)^{\oplus 2}\oplus \Gg$, because
$h^1(\Gg ^\vee (-1,0,0)) =0$ by K\"{u}nneth. In the second case we get that $\Oo _X$ is a factor of $\Ff$, because $h^1(\Gg ^\vee )=0$ by K\"{u}nneth.
\end{example}

\begin{example}\label{s2}
Since $h^1(\Oo _X(-2,1,0)) =h^1(\Oo _X(-2,0,1)) =2$, there are non-trivial extensions
\begin{equation}\label{equ2+}
0 \to \Oo _X(0,0,1)\oplus \Oo _X(0,1,0) \to \Ee \to \Oo _X(2,0,0)\to 0
\end{equation}
and $\Ext^1$ is a $4$-dimensional vector space. In this vector space the origin corresponds to $\Oo _X(2,0,0)\oplus \Oo _X(0,1,0) \oplus \Oo _X(0,0,1)$, while two $2$-dimensional linear spaces correspond to extensions $\Oo _X(0,0,1)\oplus \Ff$ and $\Oo _X(0,1,0)\oplus \Gg$ with
$\Ff = \pi _{12}^\ast (\Ff ')$, $\Gg =\pi _{13}^{\ast}(\Gg ')$ where $\Ff'$, $\Gg'$ are spanned on $Q=\PP^1\times \PP^1$ and of type $(2,1)$. By \cite[Proposition 3.7]{BHM2} such bundles $\Ff'$ and $\Gg'$ are in the following list:
\begin{enumerate}
\item $ 0\to \Oo_Q \to \Oo_Q(0,1)\oplus \Oo_Q(1,0)^{\oplus 2} \to \Hh \to 0$ ; $c_2(\Hh)=2$
\item $0\to \Oo_Q(-1,-1) \to \Oo_Q(1,0)\oplus \Oo_Q^{\oplus 2} \to \Hh \to 0$ ; $c_2(\Hh)=3$.
\item $0\to \Oo_Q(-2,-1) \to \Oo_Q^{\oplus 3} \to \Hh \to 0$ ; $c_2(\Hh)=4$.
\end{enumerate}

Every bundle $\Ee$ in (\ref{equ2+}) is globally generated with $h^0(\Ee )=7$, $h^1(\Ee )=0$ and $c_2(\Ee)=(e_1, e_2, e_3)=(1,2,2)$. Lemma
\ref{salto1} gives $s=1$. Hence $C$ is a smooth rational curve.
Let $(\epsilon_1,\epsilon_2)\in H^1(\Oo _X(-2,1,0))\times H^1(\Oo _X(-2,0,1))$ denote the extension class representing $\Ee$.

\quad {\emph {Claim }}: If $\epsilon_1\ne 0$ and $\epsilon_2\ne 0$, then $\Ee$ is indecomposable.

\quad {\emph {Proof of Claim }}: $\Oo _X(2,0,0)$ cannot be a factor of $\Ee$, because a non-zero map $\Oo _X(2,0,0) \to \Ee$ gives a splitting of (\ref{equ2}). Similarly no $\Oo _X(a,b,c)$ with $a+b+c \ge 2$ may be a direct factor of $\Ee$. Since $\Ee$ is globally generated, no $\Oo _X(a_1,a_2,a_3)$ with $a_1<0$ is a direct factor of $\Ee$. To prove the Claim it is sufficient to prove that neither  $\Oo _X(1,0,0)$ nor $ \Oo _X(0,1,0)$ is a factor of $\Ee$. Assume for instance that $\Oo _X(1,0,0)$ is a factor and call $f: \Ee \to \Oo _X(0,0,1)$ the associated surjection. Since $h^0(\Oo _X(1,-1,0))=0$, we have $f_{\vert{\Oo _X(0,1,0)}} \equiv 0$ and so the associated map
$f_{\vert{\Oo _X(0,0,1)\oplus \Oo _X(0,1,0)}}$ is either zero or a surjection. In the former case we get a surjection $\Oo _X(2,0,0)\to \Oo _X(0,0,1)$, absurd. In the latter case we get $\epsilon_2=0$. \qed
\end{example}

\begin{proposition}\label{kk1}
There are globally generated vector bundles $\Ee$ of rank $r$ with the Chern classes $c_1=(2,1,1)$, $c_2=(1,2,2)$ and no trivial factor if and only if $3\le r\le 4$.
\begin{enumerate}
\item For each $r\in \{3,4\}$, the family of such bundles is parametrized by an irreducible family.
\item For any such a bundle,  the associated curve $C$ is connected and we have $h^0(\Ee (-1,0,0)) >0$, $h^1(\Ee )=0$.
\item \begin{itemize}
\item[(i)] If $r=3$, then $\Ee$ is as in Example \ref{s2}.
\item[(ii)] If $r=4$, then $\Ee \cong \Oo_X(1,0,0)^{\oplus 2} \oplus \Oo_X(0,1,0)\oplus \Oo_X(0,0,1)$.
\end{itemize}
\end{enumerate}
\end{proposition}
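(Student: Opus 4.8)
The plan is to prove the two implications separately and then read off the classification. For the ``if'' direction nothing new must be built: Example \ref{s2} already produces globally generated bundles with $c_1=(2,1,1)$, $c_2=(1,2,2)$ and no trivial factor of rank $3$, parametrized by the irreducible affine space $\Ext^1(\Oo_X(2,0,0),\Oo_X(0,0,1)\oplus\Oo_X(0,1,0))$, and $\Ee_0:=\Oo_X(1,0,0)^{\oplus2}\oplus\Oo_X(0,1,0)\oplus\Oo_X(0,0,1)$ of Remark \ref{allspl}(8) is a single such bundle of rank $4$. So the content is the converse together with the identification in part (3).

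So let $\Ee$ be globally generated of rank $r\ge2$ with $c_1=(2,1,1)$, $c_2=(1,2,2)$ and no trivial factor, and let $C$ be its associated curve; $C$ has multidegree $(1,2,2)$ and degree $5$. Since $e_1=1\ne0$, Lemma \ref{salto1} forces $s=1$, so $C$ is connected; since $\pi_{12|_C}$ is an embedding (Lemma \ref{f1} with $b=1$), $C$ is a smooth curve of bidegree $(1,2)$ in $\PP^1\times\PP^1$, hence $C\cong\PP^1$. This already excludes $r=2$: then $\omega_C\cong\Oo_C(0,-1,-1)$ would have degree $-e_2-e_3=-4$, while $\deg\omega_{\PP^1}=-2$; so $r\ge3$. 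A routine computation on $C\cong\PP^1$ of multidegree $(1,2,2)$ — restrict $H^0(\Oo_X(1,1,1))$ and $H^0(\Oo_X(2,1,1))$ to $C$, using $h^i(\Oo_X(1,1,1))=h^i(\Oo_X(2,1,1))=0$ for $i>0$ — gives $h^1(\Ii_C(1,1,1))=h^1(\Ii_C(2,1,1))=0$, whence $h^1(\Ee)=0$, $h^0(\Ee)=r+4$, and $h^0(\Ee(-1,0,0))=h^0(\Ii_C(1,1,1))=2>0$; this settles (2).

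The bound $r\le4$ is the key point, and it is clean. As $C\cong\PP^1$ is a local complete intersection of codimension $2$, one computes $\mathcal{E}xt^1(\Ii_C(2,1,1),\Oo_X)\cong\omega_C\otimes\omega_X^{\vee}|_C\otimes\Oo_C(-2,-1,-1)\cong\Oo_{\PP^1}(2)$, and then, from the local-to-global spectral sequence together with the Künneth vanishings $h^1(\Oo_X(-2,-1,-1))=h^2(\Oo_X(-2,-1,-1))=0$, one gets $\Ext^1(\Ii_C(2,1,1),\Oo_X)\cong H^0(\Oo_{\PP^1}(2))$, which has dimension $3$. The extension (\ref{equ+}) is classified by an $(r-1)$-tuple of elements of this space; if those classes spanned only an $m$-dimensional subspace with $m<r-1$, then, adapting a basis of $\Oo_X^{\oplus(r-1)}$ to that subspace, $\Ee$ would split off $\Oo_X^{\oplus(r-1-m)}$, contrary to hypothesis. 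Hence $r-1$ equals $m$, so $r-1\le3$ and $r\le4$.

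Finally, for the identification in (3): from the two independent sections of $\Ee(-1,0,0)$ we obtain a map $w\colon\Oo_X(1,0,0)^{\oplus2}\to\Ee$, and, mimicking the bookkeeping of Lemma \ref{ss1} and of step (c) in the proof of Proposition \ref{a2.1++}, we go through the possibilities for the saturated rank-one subsheaf $\Oo_X(1,b,c)$ ($b,c\in\{0,1\}$) of $\Ee$ produced by $w$ and for the resulting torsion-free quotient $\Gg$ of rank $r-2$. By Proposition \ref{trivial} such a $\Gg$ (whose first Chern class has a vanishing entry) is pulled back from $\PP^1\times\PP^1$, and the list of globally generated bundles of small rank and first Chern class on $\PP^1\times\PP^1$ from \cite{BHM2} pins it down. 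For $r=4$ this forces $w$ injective with $\Gg\cong\pi_{23}^{\ast}(\Oo_Q(1,0)\oplus\Oo_Q(0,1))=\Oo_X(0,1,0)\oplus\Oo_X(0,0,1)$, and the extension $0\to\Oo_X(1,0,0)^{\oplus2}\to\Ee\to\Oo_X(0,1,0)\oplus\Oo_X(0,0,1)\to0$ splits because $H^1(\Oo_X(1,-1,0))=H^1(\Oo_X(1,0,-1))=0$, so $\Ee\cong\Ee_0$; for $r=3$ one is led instead to an extension $0\to\Oo_X(0,0,1)\oplus\Oo_X(0,1,0)\to\Ee\to\Oo_X(2,0,0)\to0$, i.e. $\Ee$ is as in Example \ref{s2}. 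The main obstacle is precisely this last step — excluding the degenerate configurations ($w$ non-injective, non-saturated image, non-locally-free quotient) one by one and matching the surviving rank-$3$ bundles with the presentation of Example \ref{s2}; the bound $r\le4$ and the cohomology above are routine once set up.
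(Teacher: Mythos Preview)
Your proposal is correct and the overall architecture matches the paper's, but two steps are handled differently and it is worth flagging both.

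For $h^1(\Ee)=0$ you assert that the restriction maps $H^0(\Oo_X(1,1,1))\to H^0(\Oo_C(1,1,1))$ and $H^0(\Oo_X(2,1,1))\to H^0(\Oo_C(2,1,1))$ are surjective as a ``routine computation''. This is true, but it is not automatic for every degree-$5$ rational curve in $\PP^7$; the reason it works here is that $e_1=1$ forces $\pi_1|_C$ to be an isomorphism, so on $C\cong\PP^1$ the restriction becomes a multiplication map $H^0(\Oo_{\PP^1}(a))\otimes V_2\otimes V_3\to H^0(\Oo_{\PP^1}(a+4))$ with $V_2,V_3\subset H^0(\Oo_{\PP^1}(2))$ base-point-free pencils, and the base-point-free pencil trick then gives surjectivity. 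The paper instead argues indirectly: it uses semicontinuity from Example~\ref{s2} for a general $C$, and for an arbitrary $C$ with $\Ii_C(2,1,1)$ spanned it shows that $h^1(\Ii_C(1,1,1))>0$ would force $\langle C\rangle$ to have dimension $\le 4$, whence a trisecant line (Castelnuovo--Berzolari) lying on $X$ would sit in the base locus of $\Ii_C(2,1,1)$. Your route is shorter and gives the stronger conclusion that {\em every} smooth $C$ of multidegree $(1,2,2)$ is linearly normal, but you should say why.

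For the identification in part~(3) the paper takes a cleaner path than your $\Oo_X(1,0,0)^{\oplus 2}$ approach: it uses $h^0(\Ee(0,0,-1))=h^0(\Ii_C(2,1,0))=1$ to peel off a unique $\Oo_X(0,0,1)\hookrightarrow\Ee$, proves $\mathrm{coker}$ is locally free by checking reflexivity (via $h^1(\Ee(-t))=h^2(\Oo_X(-t,-t,1-t))=0$ for $t\gg0$) and $c_3=0$, then repeats with $\Oo_X(0,1,0)$; the remaining rank-$(r-2)$ bundle has $c_1=(2,0,0)$, so it is $\Oo_X(2,0,0)$ for $r=3$ (giving sequence~(\ref{equ2+})) and $\Oo_X(1,0,0)^{\oplus 2}$ for $r=4$ (and $h^1(\Oo_X(-1,1,0))=h^1(\Oo_X(-1,0,1))=0$ splits the extension). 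Your sketch via $w\colon\Oo_X(1,0,0)^{\oplus 2}\to\Ee$ reaches the same endpoint for $r=4$, but for $r=3$ the passage from $w$ to the presentation $0\to\Oo_X(0,1,0)\oplus\Oo_X(0,0,1)\to\Ee\to\Oo_X(2,0,0)\to 0$ is not explained and is not the natural output of analysing $\mathrm{coker}(w)$; the paper's two-step peel-off avoids this detour.
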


\begin{proof}
\quad{(a)} Lemma \ref{salto1} gives that $s=1$. Since $e_1=1$, $C$ is rational. We also get $\deg (\omega _C(0,1,1)) =2$ and so $h^0(\omega _C(0,1,1))=3$. Therefore for a fixed $C$ we get bundles with no trivial factor
if and only if $3\le r \le 4$ (see Remark \ref{24nov}).

\quad{(b)} Let $\Theta$ be the set of all smooth rational curve $E\subset X$ with multidegree $(1,2,2)$.
The set $\Theta$ is an irreducible variety. Any $C$ coming from a bundle $\Ee$ as in Example \ref{s2} is an element of $\Theta$. Since $h^2(\Oo _X)=0$ and $h^1(\Ee ) =0$ for such bundle, we get $h^1(\Ii _C(2,1,1)) =0$. By semicontinuity the same is true for all $E\in \Theta '$ with $\Theta '$ a non-empty open subset of $\Theta$. Since $\Ii _C(2,1,1)$ is globally generated and this condition gives an open subset for families of ideal sheaves of smooth curves with constant $h^0$, there is a non-empty open subset $\Theta ''$ of $\Theta '$ such that each $E\in \Theta''$ gives a globally generated vector bundle $\Ee$ with $E$ as one of its zero-loci and
with $h^1(\Ee )=0$. Assume $h^1(\Ee )>0$, i.e. assume $h^1(\Ii _C(2,1,1)) >0$. Since the multiplication map $H^0(\Oo _C(1,0,0))\otimes H^0(\Oo _C(1,1,1)) \to H^0(\Oo _C(2,1,1))$ is surjective, we get $h^1(\Ii _C(1,1,1)) >0$, i.e. the smooth rational curve $C\subset X\subset \PP^7$ of degree $5$ is not linearly normal. Thus its linear span $\langle C\rangle$ has dimension at most $4$. The formula for the number of trisecant lines of smooth curves in $\PP^4$ due to Castelnuovo and Berzolari \cite[Section 1.1]{l}
gives the existence of a line $D\subset \langle C\rangle$ such that $\deg (D\cap C)\ge 3$. Since $X$ is cut out by quadrics in $\PP^7$, we get $D\subset X$. Hence $D$ is in the base locus of $\Ii _C(2,1,1)$, a contradiction.

\quad{(c)} Take any $r\in \{3,4\}$. Fix $C\in \Theta''$ and then we have $h^0(\Oo _C(2,1,0)) = 5 =h^0(\Oo _X(2,1,0)) -1$. Since $h^1(\Oo _X(0,0,-1)) =0$, we get $h^0(\Ee (0,0,-1)) =1$ and so, up to a scalar, there is a unique injective map $f: \Oo _X(0,0,1) \to \Ee$ with cokernel $\Ff := \mathrm{coker} (f)$. Since $h^0(\Ee (-1,0,-1)) = h^0(\Ee (0,-1,-1))= h^0(\Ee (0,0,-2)) =0$, so $\mathrm{Im}(f)$ is saturated in $\Ee$, i.e. $\Ff$ is torsion-free.

\quad {\emph {Claim }}: $\Ff$ is locally free.

\quad {\emph {Proof of Claim }}: $\Ff$ is reflexive if and only if $h^1(\Ff (-t)) =0$ for $t\gg 0$  by \cite[Remark 2.5.1]{Hartshorne1}. This is true, because $h^1(\Ee (-t))=0$ if $t\gg 0$ and $h^2(\Oo _X(-t,-t,-t-1)) =0$ if $t \ge 0$. Assume first $r=3$. Since $\Ff$ is a reflexive sheaf of rank $2$, so $\Ff$ is locally free if and only if $c_3(\Ff )=0$ by \cite[Proposition 2.6]{Hartshorne1}. The integer $c_3(\Ff)$ depends only on the Chern classes of $\Ee$ and of $\Oo _X(0,0,1)$ and so it may computed taking as $\Ee$ any of the bundles in Example \ref{s2}. For these $\Ee$ we get $c_3(\Ff ) = c_3(\Oo _X(0,1,0)\oplus \Oo _X(0,0,2)) =0$. Now assume $r=4$. We take as $\Ee '$ the quotient of $\Ee$ by the image of a general section of $\Ee$. We call $\Ff'$ the sheaf $\Ee '/\Oo _X(0,0,1)$. We just proved that $\Ff'$ is locally free. Since $\Ff$ is an extension of $\Ff '$ by $\Oo _X$, it is locally free, concluding the proof of the \emph{Claim}. \qed

Similarly as above we have an injective map $f:\Oo_X (0,1,0) \to \Ff$ whose cokernel $\Gg :=\mathrm{coker}(f)$ is locally free. It gives us an exact sequence
$$0\to \Oo_X(0,1,0)\oplus \Oo_X(0,0,1) \to \Ee \to \Gg \to 0.$$
If $r=3$, we have $\Gg\cong \Oo_X(2,0,0)$ and so we are in sequence (\ref{equ2+}). If $r=4$, then we have $\Gg \cong \Oo_X(1,0,0)^{\oplus 2}$. Since $h^1(\Oo_X(-1,1,0))=h^1(\Oo_X(-1,0,1))=0$, so the extension is trivial.
\end{proof}

\begin{remark}
Take $C$ as in the proof of Proposition \ref{kk1}, i.e. $C\subset X$ is a smooth and connected rational curve with multidegree $(1,2,2)$ such that $h^1(\Ii _C(2,1,1))=0$, i.e. $h^0(\Ii _C(2,1,1)) =9$, and $\Ii _C(2,1,1)$ globally generated. Since $h^1(C,TX_{\vert_C}) =0$, so the normal bundle $N_{C|X}$ of $C$ in $X$ satisfies $h^1(C,N_{C|X}) =0$. The vector bundle $N_{C|X}$ has rank two and degree
$$\deg (TX_{\vert_C}) -2 = 2\times 1+2\times 2+2\times 2 -2=8.$$
Thus we have $h^0(C,N_{C|X}) = 10$ and so the Hilbert scheme of $X$ is smooth at $C$ and its unique irreducible component, say $\mathbb {H}$, containing $C$ has dimension $10$. Let $\mathbb {H}'$ be the non-empty open subset of $\mathbb {H}$ parametrizing all smooth and connected rational curves $C$ with the numerical invariants above, i.e. $\deg (\Oo _C(1,0,0)) =1$, $\deg (\Oo _C(0,1,1))= \deg (\Oo _C(0,0,1)) =2$, $h^1(\Ii _C(2,2,1))=0$ and $\Ii _C(2,1,1)$ globally generated. For any $C\in \mathbb {H}'$ we have $h^0(\omega _C(0,1,1)) = 3$.
If $r=4$, then the Hartshorne-Serre correspondence shows that for each $C\in \mathbb {H}'$ we associate a unique globally generated $\Ee$ with $C$ as the zero-locus of a section and $h^1(\Ee )=0$. Since $r=4$, we get $h^0(\Ee ) =12$ and so $\PP H^0(\Ee ) \cong \PP^{11}$.
\end{remark}

\begin{example}\label{s4}
Let $\Gg:=\phi^*T\PP^3(-1)$, where $\phi : X \to \PP^3$ is the linear projection.
We have $\dim \Ext^1 (\Gg, \Oo(1,0,0))= h^1(\Gg^\vee(1,0,0)) \geq 4$ and so we have a family $\{\Ff _\lambda \}$ of non-trivial extensions of  $\Gg$ by $\Oo(1,0,0)$ with
$c_1=(2,1,1)$ and $c_2=(2,3,3)$ with the exact sequence
\begin{equation}\label{eqqa}
0\to \Oo_X^{\oplus 3} \to \Ff_{\lambda} \to \Ii_C(2,1,1)\to 0,
\end{equation}
where $C$ is a smooth curve of multidegree $(2,3,3)$. From the proof (c2) of Theorem \ref{prop4.2}, we get $p_a(C)=2$ and in particular $C$ is hyperelliptic. Thus $\Oo_C(1,0,0)$ is the unique $g_2^1$ and so canonical. Since $\omega_C(-1,0,0)\cong \Oo_C$, so the Hartshorne-Serre correspondence implies the existence of a globally generated vector bundle $\Hh$ fitting into the sequence
$$0\to \Oo_X \to \Hh \to \Ii_C(3,2,2) \to 0.$$
Note that $h^0(\Hh(-2,-1,-1))=h^0(\Ii_C(1,1,1))=h^0(\Ff_{\lambda}(-1,0,0))=1$. From the sequence (\ref{eqqa}), we also have $h^0(\Hh(-3,-1,-1))=h^0(\Hh(-2,-2,-1))=h^0(\Hh(-2,-1,-2))=0$ and so a non-zero section in $H^0(\Hh(-2,-1,-1))$ induces an exact sequence
$$0\to \Oo_X(2,1,1) \to \Hh \to \Ii_T(1,1,1) \to 0$$
with $T$ a locally complete intersection $0$-dimensional subscheme of codimension $2$. Since $c_2(\Hh)=(2,3,3)$, so we get $T=\emptyset$. Since $h^1(\Oo_X(1,0,0))=0$, so the extension is trivial. Thus we have $\Hh \cong \Oo_X(2,1,1)\oplus \Oo_X(1,1,1)$ and $\Ii_C$ admits the following locally free resolution:
$$0\to\Oo_X(-1,-1,-1)\to\Oo_X\oplus\Oo_X(1,0,0)\to\Ii_C(2,1,1)\to 0.$$
Moreover $h^1(\Ff_{\lambda}^\vee)=4$ so we have higher rank bundles up to $r=8$ with the same Chern classes and no trivial factors.
\end{example}

\begin{theorem}\label{prop4.2}
Let $\Ee$ be a globally generated vector bundle of rank $r\ge 2$ on $X$ with the Chern classes $c_1=(2,1,1)$, $c_2=(e_1, e_2, e_3)$ and no trivial factor. If the associated curve $C$ is not connected with $s\ge 2$ components, then each component of $C$ has the same multidegree and
\begin{enumerate}
\item $(s;e_1, e_2, e_3)=(3;0,3,3)$ ; $2\le r \le 4$,
\item $(s;e_1, e_2, e_3)=(2;0,2,2)$ ; $2\le r \le 3$ ;\begin{itemize}
\item[(a)] $\Ee\cong \Ee_{\lambda}$ as in Example \ref{s1} or
\item[(b)] $\Ee \cong \Oo_X(1,0,0)^{\oplus 2} \oplus \Oo_X(0,1,1)$.
\end{itemize}
\end{enumerate}
If $C$ is connected, then $(p_a(C);e_1, e_2, e_3)$ and the rank $r$ are as follows:
\begin{enumerate}
\item $(0;0,1,1)$ ; $ \Ee \cong \Oo_X(1,1,1)\oplus \Oo_X(1,0,0)$,
\item $(0;2,1,1)$ ; $\Ee\cong \Uu(0,0,-1)$ where $\Uu$ is an Ulrich bundle with $c_1=(2,1,3)$ and $c_2=(3,3,1)$ in \cite[Section 7]{CFM0},
\item $(0;1,a,b)$ with $a+b \ge 2$ ; $2\le r \le a+b$,
\begin{itemize}
\item [(i)] $(a,b)=(1,1) \Leftrightarrow \Ee \cong \Oo_X(1,1,0)\oplus \Oo_X(1,0,1)$.
\item [(ii)] $(a,b)=(2,0) \Leftrightarrow \Ee \cong \Oo_X(2,1,0)\oplus \Oo_X(0,0,1)$.
\item [(iii)] $(a,b)=(2,2) \Leftrightarrow$ \begin{itemize}
\item[(a)] $\Ee$ as in Example \ref{s2} or
\item[(b)] $\Ee \cong \Oo_X(1,0,0)^{\oplus 2} \oplus \Oo_X(0,1,0)\oplus \Oo_X(0,0,1),$
\end{itemize}
\end{itemize}
\item $(1;2,2,2)$ ; $3 \le r \le 5$,
\item $(2;2,3,3)$ ; $3\le r \le 8$,
\item $(3;2,4,4)$ ; $3 \le r \le 11$.
\end{enumerate}
\end{theorem}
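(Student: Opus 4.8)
The plan is to assemble the structural results already proved, organising the argument by the number $s$ of connected components of $C$ and, when $C$ is connected, by the value of $e_1$. Throughout I use the sequence (\ref{equ+}) with $C$ nonempty (since $\Ee$ has no trivial factor), and the following form of the Hartshorne--Serre dictionary: for a smooth curve $C\subset X$ the relevant line bundle is $\omega _C(0,1,1)=\omega _C\otimes \Oo _C(c_1(X)-c_1)$, and a globally generated $\Ee$ of rank $r$ with no trivial factor and zero locus $C$ exists if and only if $\omega _C(0,1,1)$ is globally generated and either $r=2$ with $\omega _C(0,1,1)\cong \Oo _C$, or $r\ge 3$ with $r-1\le h^0(\omega _C(0,1,1))$ (compare Remark \ref{24nov} and Example \ref{ooo1}). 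Hence, once the multidegree and arithmetic genus of $C$ are known, the admissible ranks follow from $h^0(\omega _C(0,1,1))$ by Riemann--Roch on $C$, and what remains is to determine which curves $C$ occur and to exhibit the bundles. I also record the constraints already available: no component of $C$ is a line, $\pi _{12|_C}$ and $\pi _{13|_C}$ are embeddings (Lemma \ref{f1}), and $e_1\le 2$, $e_2\le 4$, $e_3\le 4$ with $(e_1,e_2,e_3)=(2,4,4)$ exactly when $C=Y$.

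\textbf{Disconnected case.} If $s\ge 2$, Lemma \ref{salto1} forces $s\in\{2,3\}$, $(e_1,e_2,e_3)=(0,s,s)$, each component a $(0,1,1)$--conic, and $\omega _C(0,1,1)\cong \Oo _C$; thus $h^0(\omega _C(0,1,1))=s$ and Remark \ref{24nov} gives $2\le r\le s+1$. For $s=2$ the bundles are identified by Lemma \ref{ss1}, yielding (a) and (b) of case (2). For $s=3$ the rank-$3$ bundles are produced by Example \ref{s3}, and for $r\in\{2,4\}$ one takes $C$ a disjoint union of three $(0,1,1)$--conics in general position (so that $\Ii _C(2,1,1)$ is globally generated) and invokes Remark \ref{24nov}; this gives case (1).

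\textbf{Connected case, $e_1=0$ or $e_1=2$.} If $e_1=0$, then $\pi _1|_C$ is constant, so $C=\{o\}\times C'$ with $C'\subset Q$ of bidegree $(e_2,e_3)$; restricting the globally generated sheaf $\Ii _C(2,1,1)$ to the divisor $\{o\}\times Q\in|\Oo _X(1,0,0)|$ shows that $\Oo _Q(1-e_2,1-e_3)$ is globally generated, so $e_2,e_3\le 1$, and since no component is a line, $e_2=e_3=1$, $p_a(C)=0$, $r=2$, and Lemma \ref{sss2} gives $\Ee\cong \Oo _X(1,1,1)\oplus \Oo _X(1,0,0)$ (case (1) of the connected list). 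If $e_1=2$, then $C$ is connected (Lemma \ref{salto1}), and $\pi _{12|_C}$, $\pi _{13|_C}$ embed $C$ as a smooth curve of bidegree $(2,e_2)$, resp. $(2,e_3)$, in $Q$, so by adjunction in $Q$ we get $p_a(C)=e_2-1=e_3-1$; hence $e_2=e_3=:k$ with $1\le k\le 4$ (the lower bound because a smooth connected curve in $Q$ of bidegree $(2,0)$ does not exist), $p_a(C)=k-1$, and $k=4$ iff $C=Y$. Now $\omega _C(0,1,1)$ has degree $4k-4$ and is non-special, so $h^0(\omega _C(0,1,1))=3k-2$; this gives $r=2$ when $k=1$ and $3\le r\le 3k-1$ for $k\ge 2$, i.e. the rank ranges of cases (2), (4), (5), (6). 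Existence: $k=4$ is Example \ref{ooo1}; $k=3$ is Example \ref{s4}; for $k=2$ one constructs directly a smooth elliptic curve of multidegree $(2,2,2)$ with $\Ii _C(2,1,1)$ globally generated, e.g. the image of a smooth $(2,2)$--curve $C'$ in a factor $Q$ under $(\psi,\mathrm{incl})$ for a degree-$2$ map $\psi\colon C'\to \PP^1$; for $k=1$ the curve $C$ is the essentially unique rational quartic, $\Ext^1(\Ii _C(2,1,1),\Oo _X)$ is one-dimensional, so $\Ee$ is unique and is checked to be the stated twist of an Ulrich bundle of \cite{CFM0}.

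\textbf{Connected case, $e_1=1$ --- the main obstacle.} Here $\pi _1|_C$ is an isomorphism, so $C\cong \PP^1$, $p_a(C)=0$, $\omega _C(0,1,1)\cong \Oo _{\PP^1}(e_2+e_3-2)$, and $2\le r\le e_2+e_3$ with $r=2$ only when $e_2+e_3=2$. The real work is to pin down which multidegrees $(1,e_2,e_3)$ occur. Writing $Y=C\cup D$ with $D$ the residual curve, one has $p_a(Y)=3$, so $p_a(D)+\deg(C\cap D)=4$, while $\deg(C\cap D)\le \deg D=10-\deg C$ (global generation of $\Ii _C(2,1,1)$) and $\deg(C\cap D)\ge 1$ ($Y$ is connected). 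Bounding $p_a(D)$ above by the maximal arithmetic genus of a reduced curve of degree $\deg D$ inside $X$ --- which is small because $X$ is cut out by quadrics in $\PP^7$ --- together with the restriction argument of the $e_1=0$ case applied to a fibre of some $\pi _i$ when $e_2$ or $e_3$ vanishes, rules out the multidegrees outside the list; the special small cases $(1,1,1)$, $(1,2,0)$, $(1,2,2)$ are then identified by Lemma \ref{sss3}, the first Proposition \ref{sss2}, and Proposition \ref{kk1}. Conversely, for each remaining multidegree one exhibits a smooth rational curve $C$ with $\Ii _C(2,1,1)$ globally generated; the vanishing $h^1(\Ii _C(2,1,1))=0$ needed to make the Hartshorne--Serre count run is obtained, as in the proof of Proposition \ref{kk1}(b), by excluding a trisecant line to $C$ contained in $X$. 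I expect this determination of the $e_1=1$ multidegrees, and the attendant liaison bookkeeping, to be the hardest part of the proof; everything else is assembly of the earlier lemmas and examples.
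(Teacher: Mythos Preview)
Your overall organisation (split by $s$, then by $e_1$) is sound and essentially parallel to the paper's, but you have inverted where the difficulty lies. The case $e_1=1$ is \emph{not} the main obstacle. Once you note that $\pi_1|_C$ is an isomorphism, so $C\cong\PP^1$ and $h^0(\omega_C(0,1,1))=e_2+e_3-1$, the rank bound $2\le r\le e_2+e_3$ is immediate; the theorem does not claim to enumerate all admissible $(a,b)$, only to record this bound and to identify $\Ee$ explicitly for the three small pairs $(1,1)$, $(2,0)$, $(2,2)$, which are handled by Lemma~\ref{sss3}, Proposition~\ref{sss2}, and Proposition~\ref{kk1}. Your proposed liaison bookkeeping (bounding $p_a(D)$, excluding trisecants) is therefore unnecessary and rests on a misreading of what case~(3) asserts. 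Incidentally, the paper gets the $e_1\ne 1$ rational cases more cheaply than you do: since both $\pi_{12}|_C$ and $\pi_{13}|_C$ are embeddings of a smooth rational curve, $e_1\ne 1$ forces $e_2=e_3=1$ directly, without your restriction-to-a-fibre argument.

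Where the paper actually does nontrivial work is in the existence parts that you pass over too quickly. For $s=3$ you assert that three $(0,1,1)$-conics ``in general position'' have $\Ii_C(2,1,1)$ globally generated; the paper proves this for \emph{every} such triple by an explicit argument on the divisor $W=\{p_1,p_2,p_3\}\times\PP^1\times\PP^1$ and a degree count on the residual curve $D$. For the genus-$1$ case $(2,2,2)$ you sketch an ad hoc elliptic curve; the paper instead shows that any such $C$ is cut out on $X$ by a codimension-$2$ linear space (via a dimension/degree argument using \cite{fov}), whence $\Ii_C(1,1,1)$ is already spanned. For the genus-$2$ case $(2,3,3)$ you cite Example~\ref{s4}, but that example itself invokes the proof of the theorem; the paper's independent argument is a reverse construction: start from a $(0,1,1)$-conic $D$, take $Y$ the complete intersection of two general members of $|\Ii_D(2,1,1)|$, set $C=Y\setminus D$, and then show that the base locus of $\Ii_C(2,1,1)$ is empty by analysing the possible residual conics $D'$. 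These three points, not the $e_1=1$ case, are the substantive content of the proof.
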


\begin{proof}
\quad (a) Assume first that $s\ge 2$. By Lemma \ref{salto1}, we have $2\le s \le 3$ and $(e_1,e_2,e_3) = (0,s,s)$. If $s=2$, then we are as in Example \ref{s1} due to Lemma \ref{ss1}. Now assume $s=3$.

\quad{\emph{Claim }}: $\Ii _C(2,1,1)$ is always globally generated. \\
\quad{\emph{Proof of Claim }}: We have $e[i]_1=0$ and $e[i]_2=e[i]_3=1$ for $i=1,2,3$ (see Lemma \ref{salto1}). Thus there are three distinct points $p_1,p_2,p_3\in \PP^1$ and $C'_i\in |\Oo _{\PP^1\times \PP^1}(1,1)|$ for $i=1,2,3$ such that $C_i = \{p_i\}\times C'_i$. Write $W:= \{p_1,p_2,p_3\}\times \PP^1\times \PP^1$ and then it is the union of three disjoint elements of $|\Oo _X(1,0,0)|$ and so the restriction map $\rho : H^0(\Oo _X(2,1,1))\to H^0(W,\Oo _W(2,1,1))$ is bijective. Since $\Ii _{C'_i,\PP^1\times \PP^1}(1,1) \cong \Oo _{\PP^1\times \PP^1}$ via the automorphism induced by an equation of $C'_i$, so the line bundle $\Ii _{C,W}(2,1,1)$ is globally generated. Hence the scheme-theoretic base locus of $\Ii _C(2,1,1)$ is disjoint from $W$ and in particular it is disjoint from $C$. Since $\Ii _Y(2,1,1)$ is globally generated, the scheme-theoretic base locus of $\Ii _C(2,1,1)$ is contained in $Y=C\cup D$. Hence it is contained in $D\setminus (C\cap D)$. Assume the existence of $p\in D\setminus (C\cap D)$ with $p$ in the base locus. Since $Y \cong \pi _{12}(Y) \in |\Oo _{\PP^1\times \PP^1}(4,2)|$ and $\pi _{12}(D) \in |\Oo _{\PP^1\times \PP^1}(1,2)|$, so we have $\deg (D\cap C) =6$. Since $\deg (D\cap W) = 3 \cdot \deg (\Oo _D(0,1,1)) =6$, we get $2= h^0(\Ii _{C\cup D}(2,1,1)) = h^0(\Ii _{C\cup \{p\}}(2,1,1)) =h^0(\Ii _C(2,1,1))$. Since $h^0(\Oo _C(2,1,1)) = 9$, we get $h^0(\Ii _C(2,1,1)) >2$, a contradiction. \qed

Now let us assume that $C$ is connected, i.e. $s=1$. Since $Y$ has multidegree $(2,4,4)$, so we have $e_1\le 2$ and $e_2, e_3\le 4$.

\quad (b) Assume that $C$ is rational. In particular we have $\deg (\omega _C) = -2$ and $\deg (\omega _C(0,1,1)) = -2+e_2+e_3$. Thus we have $\omega _C(0,1,1) \cong \Oo _C$ if and only if $e_2+e_3=2$ and $\omega _C(0,1,1)$ is spanned if and only if $e_2+e_3 \ge 2$. Since ${\pi _{12}}_{\vert_C}$ is an embedding, we have either $e_1=1$ or $e_2=1$.  Again since ${\pi _{13}}_{\vert_C}$ is an embedding, we have either $e_1=1$ or $e_3=1$. Thus if $e_1\ne 1$, then we have $e_2=e_3=1$. Note that any case, if it exists, must have $e_2+e_3 =2$ and so $\omega _C \cong \Oo _C(0,-1,-1)$; so these cases cannot occur for $r>2$ with no trivial factor. Since
the intersection $Y$ of two general elements of $|\Ii _C(2,1,1)|$ has multidegree $(2,4,4)$, we have $e_1\le 2$. If $e_1=0$, we have $\Ee \cong \Oo_X(1,1,1)\oplus \Oo_X(1,0,0)$ by Lemma \ref{sss2}. If $e_1=1$, we have $\Ee \cong \Oo_X(1,1,0)\oplus \Oo_X(1,0,1)$ by Lemma \ref{sss3}. If $e_1=2$, the bundle $\Ee$ exists, because $\Ii _C(2,1,1)$ is spanned by step (c) of the proof of Proposition \ref{a2.1++}, which is the case $\mathrm{(iii)}$ of Proposition \ref{a2.1++} with $(0;2,1,1)$. If $\Ee$ is a globally generated vector bundle of rank $2$ with $c_1=(2,1,1)$ and $c_2=(2,1,1)$, we have the exact sequence
\begin{equation}\label{Ext2}
0\rightarrow \Oo_X\rightarrow \Ee\rightarrow \Ii_{Y}(2,1,1)\rightarrow 0,
\end{equation} 
where $Y$ is a smooth curve of multidegree $(2,1,1)$. Then $\Ee(0,0,1)$ is a globally generated vector bundle of rank $2$ with $c_1=(2,1,3)$ and $c_2=(3,3,1)$ and the zero locus of a general section is a smooth curve $C$ of multidegree $(3,3,1)$ with the exact sequence 
$$0\rightarrow \Oo_X\rightarrow \Ee(0,0,1)\rightarrow \Ii_{C}(2,1,3)\rightarrow 0.$$ 
From its twist by $(-1,0,-2)$, we get $h^0(\Ee(-1,0,-1))= h^0(\Ii_{C}(1,1,1))$. But from (\ref{Ext2}) twisted by $(-1,0,-1)$, we get $h^0(\Ee(-1,0,-1))= h^0(\Ii_{Y}(1,1,0))=0$. It implies that $C$ is non-degenerate and so $\Ee(0,0,1)$ is an Ulrich bundle by \cite[Lemma 7.2]{CFM0}.

\quad (c) Assume that $C$ has positive genus $g>0$ and hence $e_1\ne 1$. If $Y$ is the complete intersection of two general elements of $|\Ii _C(2,1,1)|$, then we have $p_a(Y) = 3$ and so $g \le 3$. Since ${\pi _{12}}_{\vert_C}$ and ${\pi _{13}}_{\vert_C}$ are embeddings and $e_1\le 2$, so we have
$$(e_1, e_2, e_3)=
                                  \left \{         \begin{array}{llll}
                                             (2,2,2), & \hbox{if $g=1$;}\\
                                             (2,3,3), & \hbox{if $g=2$;} \\
                                             (2,4,4), & \hbox{if $g=3$.}
                                             \end{array}\right.
                                         $$
Thus the case $g=3$ occurs only when $C=Y$, i.e. $\Ee$ is as in Remark \ref{ooo0}.

\quad{(c1)} If $g=1$, then we have $(e_1, e_2, e_3)=(2,2,2)$. Since $h^0(\Oo _C(1,1,1)) =6$, so $C$ is contained a linear subspace $V\subset \PP^7$ of codimension $2$. First assume that $\dim (V\cap X)=1$. Since $\deg (X) =6= \deg ( C)$, we get $C = X\cap V$ as schemes. Since $\Ii _C(1,1,1)$ is spanned, $\Ii _C(2,1,1)$ is spanned. In this case we also get $h^0(\Ee (-1,0,0)) =2$. Now assume $\dim (X\cap V) =2$. Take any surface $S\subseteq X\cap V$ with $S\in |\Oo _X(b_1,b_2,b_3)|$. Since $S\subset V$ and $V\subsetneq \PP^7$, there is $i\in \{1,2,3\}$ with $b_i =0$. Since $e_i >0$, we have $C\nsubseteq S$. Since $\deg (S) +\deg ({C}) > 6 =\deg (X)$, \cite[Theorem 2.2.5]{fov} gives a contradiction. Since $\deg (\Oo_C(0,1,1))=4$, so we have $h^0(\omega_C(0,1,1))=4$.

\quad{(c2)} Now assume $g=2$ and then we have $(e_1, e_2, e_3)=(2,3,3)$. Since $\deg ({C})=8$, we have $h^0(\Oo _C(1)) = 7$. Thus there is a hyperplane $H\subset \PP ^7$ such that $C\subset X\cap H$. Let $Y\subset X$ be the general intersection of two general elements of $|\Ii _C(2,1,1)|$. By the Bertini theorem $Y$ is smooth outside $C$ and so $Y = C\cup D$ with $\deg (D) =2$ and $D$ reduced. This case cannot occur when $r=2$, because $\omega _C \ne  \Oo _C(0,-1,-1)$ since $g>1$. Since $h^0(\omega _C(0,1,1)) = 7$, the existence of spanned $\Ii_C(2,1,1)$ would imply the existence of a bundle with no trivial factor if and only if $3 \le r\le 8$. Since $Y$ has multidegree $(2,4,4)$, $D$
has multidegree $(0,1,1)$.

To prove that this case gives an example we reverse the construction. We start with a smooth rational curve $D\subset X$ with multidegree $(0,1,1)$. There are a point $o\in \PP^1$ and a smooth conic $E\in |\Oo _{\PP^1\times \PP^1}(1,1)|$ such that $D = \{o\}\times E$. Let $Y$ be the intersection of two general elements of $|\Ii _D(2,1,1)|$. By the Bertini theorem $Y$ is smooth outside $D$. It is easy to check that $D$ appears with multiplicity one in $Y$ and thus we have $Y =D\cup C$ with $C$ a reduced curve with $(e_1, e_2, e_3)=(2,3,3)$. Assume for the moment that $C$ is smooth, connected and of genus $2$. Since $Y$ is in the intersection of two elements of $|\Oo _X(2,1,1)|$, so $\Ii _C(2,1,1)$ is globally generated, except at most at the points of $Y$. Let $Y'\subset X$ be the intersection of two general elements of $|\Ii _C(2,1,1)|$. We saw that $Y' = C\cup D'$ as schemes with $\deg (\Oo _{D'}(1,0,0)) =0$ and $\deg (\Oo _{D'}(0,1,0)) =\deg (\Oo _{D'}(0,0,1))=1$, i.e. there are $o'\in \PP^1$ and $E'\in |\Oo _{\PP^1\times \PP^1}(1,1)|$ such that $D' = \{o'\}\times E'$. If $o'\ne o$, then $D\cap D' =\emptyset$ and so $\Ii _C(2,1,1)$ is globally generated. Now assume $o' =o$ for a general $Y'$. Since $h^0(\Oo _C(2,1,1)) = 10+1-2=9$ by the Riemann-Roch theorem, we have $h^0(\Ii _C(2,1,1)) \ge 3$. We have $\deg (C\cdot \{o\}\times \PP^1\times \PP^1) = e_1 =2$ and so $\deg (D\cap C) \le 2$ and $\deg (D'\cap C) \le 2$. Since $p_a(Y)=p_a(Y') = 3$ and $D, D'$ are smooth and rational, we get $\deg (D'\cap C) =\deg (D\cap C)=2$. Set $Z:= C\cap \{o\} \times \PP^1\times \PP^1$. Since $D'\cap C\subseteq Z$ and $D\cap C\subseteq Z$ and $\deg (Z)=2$, we get $D\cap C =D'\cap C =Z$. Write $Z= \{o\}\times Z'$ with $Z'\subset \PP^1\times \PP^1$ and $\deg (Z')=2$. Since $h^0(\PP^1\times \PP^1,\Ii _{Z'}(1,1)) =2$, we get $h^0(\Ii _C(2,1,1)) \le 2$, a contradiction. Since $h^0(\omega _C(0,1,1)) = 7$, this case gives rank $r$ bundles
if and only if $3\le r \le 8$.
\end{proof}


\section{Case of $c_1=(2,2,1)$}
Let $\Ee$ be a globally generated vector bundle of rank $2$ on $X$ with $c_1=(a_1,a_2,a_3)$ with $0<a_i\le 2$ for all $i$ and $a_1+a_2+a_3=5$. Without loss of generality we may assume $c_1=(2,2,1)$. Any complete intersection $Y$ of two elements of $|\Oo _X(2,2,1)|$ has multidegree $(4,4,8)$, $\omega _Y\cong \Oo _Y(2,2,0)$ and hence $p_a(Y) =9$. We always take as $Y$ the complete intersection of two general elements of $|\Ii _C(2,2,1)|$ and write $Y = C\cup D$ with $D$ of multidegree $(4-e_1,4-e_2,8-e_3)$. Since $\pi_{{12}_{|C}}$ is an embedding by Lemma \ref{f1}, so we get that $e_1e_2\ne 0$ implies $s=1$ (this is true even for globally generated bundles with rank $r>2$). In other words, if $C$ is not connected, then we have either $e_1=0$ or $e_2=0$.

Now assume that the rank of $\Ee$ is $2$ and then we have $\omega_C \cong \Oo_C(0,0,-1)$. Thus each component $C_i$ of $C$ has genus at most $1$ and $C_i$ is an elliptic curve if and only if $e[i]_3=0$.
If the rank $2$ bundle $\Ee$ is decomposable with no trivial factor, then it is one of the following, up to reordering of the first and second factors.
\begin{enumerate}
\item $\Oo_X(2,2,0)\oplus \Oo_X(0,0,1)$; $c_2(\Ee)=(2,2,0)$
\item $\Oo_X(1,2,0)\oplus \Oo_X(1,0,1)$; $c_2(\Ee)=(2,1,2)$
\item $\Oo_X(2,1,1)\oplus \Oo_X(0,1,0)$; $c_2(\Ee)=(1,0,2)$
\item $\Oo_X(2,0,1)\oplus \Oo_X(0,2,0)$; $c_2(\Ee)=(2,0,4)$
\item $\Oo_X(1,1,1)\oplus \Oo_X(1,1,0)$; $c_2(\Ee)=(1,1,2)$
\end{enumerate}
Note that we have
\begin{align*}
\dim \Ext^1 (\Oo_X(1,2,0), \Oo_X(1,0,1))&=h^1(\Oo_X(-2,0,1))=2\\
\dim \Ext^1 (\Oo_X(0,2,0), \Oo_X(2,0,1))&=h^1(\Oo_X(2,-2,1))=6
\end{align*}
and in the other cases such extensions are always trivial.

\begin{lemma}\label{u1}
Let $\Ee$ be a globally generated vector bundle of rank $r\ge 2$ on $X$ with $c_1(\Ee)=(2,2,1)$, $c_2(\Ee)=(2,1,2)$ and no trivial factor. Then it fits into the sequence
$$0\to \Oo_X(1,0,1) \to \Ee \to \Oo_X(1,2,0)\to 0.$$
\end{lemma}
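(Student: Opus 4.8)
The plan is to reconstruct $\Ee$ from its Hartshorne--Serre curve, as in the reconstruction lemmas of the previous sections (e.g.\ Lemma \ref{sss3}). Let $C$ be the curve of (\ref{equ+}); it has multidegree $(e_1,e_2,e_3)=(2,1,2)$. Since $e_1e_2=2\neq 0$ and ${\pi_{12}}_{\vert_C}$ is an embedding by Lemma \ref{f1}, the curve $C$ is connected (as recalled at the start of this section), and then Remark \ref{24nov} forces $r=2$, so we have $0\to\Oo_X\to\Ee\to\Ii_C(2,2,1)\to 0$ and $\omega_C\cong\Oo_C(0,0,-1)$. As $\deg\Oo_C(0,0,-1)=-e_3=-2$, the curve $C$ is a smooth rational curve of degree $5$ and multidegree $(2,1,2)$.

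Next I would produce a nonzero map $\Oo_X(1,0,1)\to\Ee$. Since $C\cong\PP^1$, $\Oo_C(1,2,0)$ has degree $e_1+2e_2=4$, so $h^0(\Oo_C(1,2,0))=5<6=h^0(\Oo_X(1,2,0))$ and hence $h^0(\Ii_C(1,2,0))\geq 1$. As $h^0(\Oo_X(-1,0,-1))=h^1(\Oo_X(-1,0,-1))=0$ by K\"unneth, twisting (\ref{equ+}) by $(-1,0,-1)$ gives $h^0(\Ee(-1,0,-1))=h^0(\Ii_C(1,2,0))\geq 1$, so there is a nonzero, necessarily injective, map $f\colon\Oo_X(1,0,1)\to\Ee$.

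The main point --- and the step that will require the most care --- is to show that $\Image(f)$ is saturated in $\Ee$, i.e.\ that $f$ does not factor through a strictly larger line bundle. If it did, the saturation of $\Image(f)$ would be $\Oo_X(a,b,c)\subset\Ee$ with $(a,b,c)\ge(1,0,1)$ and $(a,b,c)\neq(1,0,1)$, whose torsion-free quotient is globally generated (being a quotient of the globally generated $\Ee$), forcing $2-a,\,2-b,\,1-c\ge 0$; hence $c=1$, $1\le a\le 2$ and $0\le b\le 2$. Moreover $\Hom(\Oo_X(a,b,c),\Ee)\neq 0$ and (\ref{equ+}) give $h^0(\Ii_C(2-a,2-b,0))\neq 0$. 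I would then rule out the remaining pairs $(a,b)\in\{(1,1),(1,2),(2,0),(2,1),(2,2)\}$ by the vanishings
$$h^0(\Ii_C(1,1,0))=h^0(\Ii_C(1,0,0))=h^0(\Ii_C(0,2,0))=h^0(\Ii_C(0,1,0))=h^0(\Ii_C)=0.$$
The outermost two are clear; a nonzero section of $\Ii_C(1,0,0)$, $\Ii_C(0,1,0)$ or $\Ii_C(0,2,0)$ would force $\pi_1$ (resp.\ $\pi_2$) to be constant on the integral curve $C$, contradicting $e_1=2$ (resp.\ $e_2=1$); and a nonzero section of $\Ii_C(1,1,0)$ would put $\pi_{12}(C)$, a smooth curve of bidegree $(2,1)$ on $\PP^1\times\PP^1$, inside a divisor of bidegree $(1,1)$, which is impossible. (Reducible or non-reduced members of these linear systems need a brief comment, but are excluded by the same bidegree inequalities.)

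Once $\Image(f)\cong\Oo_X(1,0,1)$ is saturated, $\Gg:=\Ee/\Image(f)$ is torsion-free of rank one with $c_1(\Gg)=(1,2,0)$, hence $\Gg\cong\Ii_T(1,2,0)$ with $T$ of codimension $\ge 2$. Comparing Chern classes in $0\to\Oo_X(1,0,1)\to\Ee\to\Gg\to 0$ yields $c_2(\Gg)=c_2(\Ee)-(t_1+t_3)(t_1+2t_2)=(2,1,2)-(2,1,2)=0$, and then $c_3(\Gg)=c_3(\Ee)=0$ since $\Ee$ has rank $2$; therefore $T=\emptyset$ and $\Gg\cong\Oo_X(1,2,0)$, which gives the asserted short exact sequence.
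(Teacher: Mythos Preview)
Your approach is essentially the same as the paper's: associate the curve $C$, show it is a connected rational curve, produce a section of $\Ee(-1,0,-1)$ via $h^0(\Ii_C(1,2,0))>0$, check saturation, and kill $T$ by a Chern-class computation. Your saturation step is in fact more thorough than the paper's (the paper lists only $h^0(\Ii_C(0,2,0))=h^0(\Ii_C(1,2,-1))=0$ and tacitly uses $h^0(\Ii_C(1,1,0))=0$ as well; you check all five cases).

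There is one small logical slip in your first paragraph. You invoke Remark \ref{24nov} to get $r=2$ from $s=1$, but that remark has the hypothesis $\omega_C\cong\Oo_C(c_1-c_1(X))=\Oo_C(0,0,-1)$, which you only derive \emph{after} concluding $r=2$; so the argument as written is circular. The paper avoids this by first reading off the genus of $C$ from the embedding $\pi_{12}$: since $\pi_{12}(C)\in|\Oo_{\PP^1\times\PP^1}(2,1)|$ is smooth and irreducible, it has genus $(2-1)(1-1)=0$, hence $C$ is rational. Then $\deg\omega_C(0,0,1)=-2+e_3=0$ gives $\omega_C(0,0,1)\cong\Oo_C$, and now Remark \ref{24nov} legitimately yields $r=2$. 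Simply reorder your first paragraph accordingly.
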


\begin{proof}
Since ${\pi _{12}}_{\vert_C}$ is an embedding and $\pi _{12}({C})\in |\Oo _{\PP ^1\times \PP^1}(2,1)|$, so $C$ is connected and rational. Since $\deg (\omega _C(0,0,1)) = \deg (\omega _C)+2 =0$, we get $\omega _C(0,0,1) \cong \Oo _C$. Since $s=1$, so we get $r=2$.

Since $h^0(\Oo _C(1,2,0)) = 5< 6=h^0(\Oo _X(1,2,0)$ and $h^1(\Oo _X(-1,0,-1)) =0$, we get $h^0(\Ii _C(1,2,0)) =1$. Since $h^1(\Oo _X(-1,0,-1)) =0$, we also get $h^0(\Ee (0,0,-1)) =1$. Thus there is a non-zero map $f: \Oo _X(1,0,1) \to \Ee$ and it induces the exact sequence,
$$0 \to \Oo _X(1,0,1)\to \Ee \to \Ii _T(1,2,0)\to 0$$
with $T$ a locally complete intersection subscheme of $X$ with pure codimension $2$, because $h^0(\Ii _C(0,2,0)) =h^0(\Ii _C(1,2,-1)) =0$. Since $(t_1+t_3)(t_1+2t_2) = 2t_1t_2+ t_1t_3+2t_2t_3$ and $c_2(\Ee ) =(2,1,2)$, so we get $T=\emptyset$.
\end{proof}

\begin{remark}
In \cite{CFM}, the moduli space of indecomposable, initialized ACM semistable bundles of rank $2$ with $c_1=(2,2,1)$ and $c_2=(2,1,2)$ is isomorphic to $\PP^1$. Indeed it is isomorphic to $\PP \Ext^1 (\Oo_X(1,2,0), \Oo_X(1,0,1))$. Each bundle in the moduli space is a pull-back of the bundle on a smooth quadric surface $Q$ fitting into the sequence
$$0\to \Oo_Q(0,1) \to \Ee \to \Oo_Q(2,0) \to 0$$
via $\pi_{23}^*$ twisted by $\Oo_X(1,0,0)$.
\end{remark}

\begin{proposition}\label{u1.1}
Let $\Ee$ be a globally generated vector bundle of rank $r\ge 2$ on $X$ with $c_1(\Ee)=(2,2,1)$, $c_2(\Ee)=(2,0,4)$ and no trivial factor. Then we have $r\in \{2,3\}$. To be precise, we have
\begin{itemize}
\item[(i)] $0\to \Oo_X(2,0,1) \to \Ee \to \Oo_X(0,2,0)\to 0,$ if $r=2$ and
\item[(ii)] $\Ee \cong \Oo _X(2,0,1)\oplus \Oo _X(0,1,0)^{\oplus 2}$, if $r=3$.
\end{itemize}
\end{proposition}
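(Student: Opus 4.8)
The plan is to read off the geometry of the associated curve $C$ first. Since $c_2(\Ee)=(2,0,4)$, the curve $C$ has multidegree $(2,0,4)$, and by Lemma~\ref{f1} the restriction $\pi_{12|_C}$ is an embedding. Hence $\pi_{12}(C)$ is a smooth curve of multidegree $(2,0)$ in $\PP^1\times\PP^1$, so it must be a disjoint union of two curves of multidegree $(1,0)$; thus $s=2$ and $C=C_1\sqcup C_2$ with each $C_i$ a smooth rational curve of multidegree $(1,0,d_i)$ contained in the fibre $W_i:=\pi_2^{-1}(q_i)\cong\PP^1\times\PP^1$ for distinct $q_1,q_2$, and $d_1+d_2=4$. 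Inside $W_i$ the curve $C_i$ is the graph of a degree-$d_i$ morphism, so a short computation shows that $\Ii_{C_i,W_i}(2,2,1)$ is the pull-back of $\Oo_{\PP^1}(2-d_i)$ from the factor of $W_i$ on which $\pi_{1|_{C_i}}$ is an isomorphism. As $\Ee$ is globally generated, so is $\Ii_C(2,2,1)$, and restricting its sections to $W_i$ forces $\Oo_{\PP^1}(2-d_i)$ to be globally generated, i.e. $d_i\le 2$; hence $d_1=d_2=2$ and each $C_i$ has multidegree $(1,0,2)$. Consequently $\omega_{C_i}\cong\Oo_{\PP^1}(-2)\cong\Oo_{C_i}(0,0,-1)$, so $\omega_C\cong\Oo_C(c_1-c_1(X))$ and $h^0(\omega_C(0,0,1))=h^0(\Oo_C)=s=2$; Remark~\ref{24nov} then gives $2\le r\le s+1=3$, so $r\in\{2,3\}$.

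For $r=2$ I would argue as in the proof of Lemma~\ref{u1}. A K\"unneth computation on the two components gives $h^0(\Oo_C(0,2,0))=2$ and the restriction $H^0(\Oo_X(0,2,0))\to H^0(\Oo_C(0,2,0))$ is surjective (a form of type $(0,2,0)$ may be prescribed freely on the two fibres $W_1,W_2$), so $h^0(\Ii_C(0,2,0))=1$; twisting $(\ref{equ+})$ by $\Oo_X(-2,0,-1)$ and using $h^0(\Oo_X(-2,0,-1))=h^1(\Oo_X(-2,0,-1))=0$ yields a non-zero $f\colon\Oo_X(2,0,1)\to\Ee$, unique up to a scalar. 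Since $h^0(\Ii_C(0,1,0))=h^0(\Ii_C)=0$, the image of $f$ is saturated, so $\mathrm{coker}(f)\cong\Ii_T(0,2,0)$ with $T$ a locally complete intersection subscheme of pure codimension $2$ or empty; comparing with $c_1(\Oo_X(2,0,1))\cdot c_1(\Oo_X(0,2,0))=(2,0,4)=c_2(\Ee)$ forces $T=\emptyset$, which is (i). (One cannot strengthen this to a splitting, since $\dim\Ext^1(\Oo_X(0,2,0),\Oo_X(2,0,1))=h^1(\Oo_X(2,-2,1))=6$.)

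For $r=3$ I would follow the proof of Lemma~\ref{ss1}. One first checks $h^1(\Ii_C(2,2,1))=0$, so $h^0(\Ii_C(2,2,1))=8$, $h^0(\Ee)=10$, and extracts, exactly as above, a saturated $f\colon\Oo_X(2,0,1)\hookrightarrow\Ee$ whose cokernel $\Ff$ is a globally generated torsion-free sheaf of rank $2$ with $c_1(\Ff)=(0,2,0)$ and $h^0(\Ff)=4$. From $h^0(\Ii_C(2,1,1))=2$ one then obtains a non-zero $w\colon\Oo_X(0,1,0)^{\oplus2}\to\Ff$. If $w$ is injective it is an isomorphism, since its cokernel would be torsion supported on a divisor of class $c_1(\Ff)-c_1(\Oo_X(0,1,0)^{\oplus2})=0$; then $\Ff\cong\Oo_X(0,1,0)^{\oplus2}$, and since $\Ext^1(\Oo_X(0,1,0),\Oo_X(2,0,1))=H^1(\Oo_X(2,-1,1))=0$ the sequence $0\to\Oo_X(2,0,1)\to\Ee\to\Oo_X(0,1,0)^{\oplus2}\to0$ splits, giving (ii). If $w$ is not injective, then $\mathrm{Im}(w)\cong\Ii_Z(0,b,0)$ with $b\in\{1,2\}$ (the value $b=0$ is impossible because $\Hom(\Oo_X(0,1,0),\Oo_X)=0$, and $b\ge 3$ is impossible because $c_1(\Ff)=(0,2,0)$ with $\mathrm{Im}(w)$ and $\Ff/\mathrm{Im}(w)$ both globally generated); a short case analysis based on $h^0(\Ff)=4$, $h^0(\Oo_X(0,1,0))=2$, the vanishings $H^1(\Oo_X(0,2,0))=H^1(\Oo_X(2,0,1))=0$, and the hypothesis that $\Ee$ has no trivial factor, excludes every alternative and again yields $\Ff\cong\Oo_X(0,1,0)^{\oplus2}$, hence (ii).

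The hard part is the final step of the $r=3$ case: the treatment of a non-injective $w$ requires the same somewhat lengthy bookkeeping as in the proof of Lemma~\ref{ss1}, and one has to make sure that no non-split bundle slips through the case analysis. The decisive point is that $\Ff\cong\Oo_X(0,1,0)^{\oplus2}$ always forces $\Ee$ to split, because $H^1(\Oo_X(2,-1,1))=0$ (its middle tensor factor is $\Oo_{\PP^1}(-1)$); this same vanishing is what makes the saturation arguments close up cleanly in both ranks.
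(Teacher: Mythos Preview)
Your proof is correct in outline and reaches the same conclusions, but it diverges from the paper's argument in two places.

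First, to pin down $e[i]_3=2$ you restrict $\Ii_C(2,2,1)$ to the fibre surfaces $W_i$ and read off the spannedness of $\Oo_{\PP^1}(2-d_i)$. This works, but the paper does something shorter: the Hartshorne--Serre condition already says $\omega_C(0,0,1)$ is spanned, hence $\deg(\omega_{C_i}(0,0,1))=-2+e[i]_3\ge 0$ for each $i$, and together with $e[1]_3+e[2]_3=4$ this forces $e[i]_3=2$ in one line.

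Second, for $r=3$ you follow the template of Lemma~\ref{ss1}: build $w\colon\Oo_X(0,1,0)^{\oplus 2}\to\Ff$ and split into the injective/non-injective cases. The paper instead proves directly that $\Ff$ is locally free: from $h^1(\Ff(-t))=0$ for $t\gg 0$ one gets $\Ff$ reflexive, and since $c_3(\Ff)$ depends only on Chern data one computes it from the split model and finds $c_3(\Ff)=0$, so $\Ff$ is locally free of rank~$2$ with $c_1(\Ff)=(0,2,0)$. Then $\Ff$ is a pull-back from the second $\PP^1$, hence either $\Oo_X(0,1,0)^{\oplus 2}$ or $\Oo_X(0,2,0)\oplus\Oo_X$; the latter is excluded because $H^1(\Oo_X(2,0,1))=0$ would force a trivial factor of~$\Ee$. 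This route avoids the case analysis entirely. In your version, the step ``$w$ injective $\Rightarrow$ $w$ is an isomorphism'' deserves one more sentence: the cokernel has $c_1=0$ so is supported in codimension $\ge 2$, but since $\Ff$ has depth $\ge 2$ (it sits in $0\to\Oo_X(2,0,1)\to\Ee\to\Ff\to 0$), the depth lemma forces the cokernel to vanish. Your non-injective case is only sketched; it can be completed along the lines you indicate, but the paper's $c_3$-argument is cleaner and bypasses it.
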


\begin{proof}
Since ${\pi _{12}}_{\vert_C}$ is an embedding and $\pi _{12}( C)\in |\Oo _{\PP ^1\times \PP^1}(0,2)|$, so $C$ has two connected components $C_1$ and $C_2$, both smooth and rational. Since $\omega _C(0,0,1)$ is globally generated, $\deg (\omega _{C_i}(0,0,1)) = \deg (\omega _{C_i})+e[i]_3 \ge 0$ for $i=1,2$, and $e[1]_3+e[2]_3=4$, so we get $e[1]_3=e[2]_3= 2$ and $\omega _C(0,0,1) \cong \Oo _C$. Since $\Ee$ has no trivial
factor, we get $2 \le r \le 3$ (see Remark \ref{24nov}).

\quad (a) Assume $r=2$. Since $h^0(\Oo _C(0,2,0)) = 2< 3=h^0(\Oo _X(0,2,0)$ and $h^1(\Oo _X(-2,0,-1))=0$, we get $h^0(\Ee(-2,0,-1))=h^0(\Ii _C(0,2,0)) \geq 1$. Thus there is a non-zero map $f: \Oo _X(2,0,1) \to \Ee$ and it induces the following exact sequence, because $h^0(\Ii _C(0,2,0)) =h^0(\Ii _C(1,2,-1)) =0$,
$$0 \to \Oo _X(2,0,1)\to \Ee \to \Ii _T(0,2,0)\to 0$$
with $T$ a locally complete intersection subscheme of $X$ with pure codimension $2$.
Since $(2t_1+t_3)(2t_2) = 4t_1t_2+2t_2t_3$ and $c_2(\Ee ) =(2,0,4)$, we get $T=\emptyset$.

\quad (b) Assume $r=3$. Since $h^0(\Oo _C(0,2,0))  = 2< 3=h^0(\Oo _X(0,2,0))$ and $h^1(\Oo _X(-2,0,-1))=0$, we get $h^0(\Ee(-2,0,-1))\geq 1$. Thus there is a non-zero map $f: \Oo _X(2,0,1) \to \Ee$. Set $\Ff:= \mathrm{coker}(f)$. Since $h^0(\Ii _C(0,2,0)) =h^0(\Ii _C(1,2,-1)) =0$, so $\Ff$ has no torsion. If $t\gg 0$, then $h^1(\Oo _X(2-t,-t,1-t))
= h^2(\Oo _X(2-t,-t,1-t))=0$. The exact sequence defining $\Ff$ gives $h^1(\Ff (-t)) =0$ for all $t\gg 0$. Hence $\Ff$ is reflexive by \cite[Remark 2.5.1]{Hartshorne1}. We have $c_3(\Ee ) = \deg (\omega _C(0,0,1)) = c_3(\Oo _X(2,0,1)\oplus \Oo _X(0,1,0)^{\oplus 2})$. The exact sequence defining $\Ff$ gives $c_3(\Ff )=0$. Hence $\Ff$ is locally free by \cite[Proposition 2.6]{Hartshorne1}. Since $c_1(\Ff )=(0,2,0)$, we get $\Ff \cong  \Oo _X(0,1,0)^{\oplus 2}$. Since $h^1(\Oo _X(2,-1,1)) =0$, we get
$\Ee \cong  \Oo _X(2,0,1)\oplus \Oo _X(0,1,0)^{\oplus 2}$.
\end{proof}

\begin{remark}\label{rema}
For each globally generated vector bundle $\Ff$ of rank $2$ with $c_1 = (0,2,1)$, the bundle $\Ee=\Ff(1,0,0)$ is globally generated with $c_1=(2,2,1)$. Note that $\Ff\cong \pi_{23}^* (\Gg)$ where $\Gg$ is a globally generated vector bundle on $Q\cong \PP^1 \times \PP^1$ with $c_1(\Gg)=(2,1)$. Such bundles are given in Example \ref{s2} or \cite[Proposition 3.7]{BHM2}. In particular we have
\begin{align*}
c_2(\Ee)=c_2(\Ff(1,0,0))&=c_2(\Ff)+(t_1t_3+2t_1t_2)\\
&=\pi_{23}^*(c_2(\Gg))+(t_1t_3+2t_1t_2),
\end{align*}
since $\Oo_X(1,0,0)\cdot \Oo_X(0,2,1)=t_1t_3+2t_1t_2$.
\end{remark}

\begin{remark}
Using the method in Remark \ref{rema} and the results in \cite{BHM2}, we may construct several decomposable bundles of rank $r\ge 3$ on $X$.
\begin{enumerate}
\item For the bundles $\Gg$ on $\PP^1 \times \PP^1$ as in Remark \ref{rema} we have $\Ee \cong \Oo_X(2,0,0)\oplus \pi_{23}^*(\Gg)$.
\item If $\Hh$ is a spanned bundle on $\PP^1\times \PP^1$ with $c_1(\Hh)=(2,2)$ and no trivial factor, then we also have $\Ee\cong \Oo_X(0,0,1)\oplus \pi_{12}^*(\Hh)$. For the list of such bundles, see \cite[Section $4$ and $5$]{BHM2}. The possible second Chern classes $c_2$ are in $\{3,4,5,6,8\}$.
\end{enumerate}
\end{remark}

Our main result in this section is the classification of globally generated vector bundles of rank $2$ on $X$ with $c_1=(2,2,1)$.

\begin{theorem}\label{kkkk2}
Let $\Ee$ be a globally generated vector bundle of rank $2$ on $X$ with $c_1=(2,2,1)$, $c_2=(e_1, e_2, e_3)$ and no trivial factor. If the associated curve $C$ is not connected with $s\ge 2$ components, then up to permutations on $(e_1, e_2)$ we have
\begin{enumerate}
\item $(s;e_1, e_2, e_3)=(2;2,0,4)$ ; $\Ee$ fits into the sequence
$$0\to \Oo_X(2,0,1) \to \Ee \to \Oo_X(0,2,0) \to 0,$$
\item $(s;e_1, e_2, e_3)=(3;3,0,6)$.
\end{enumerate}
If $C$ is connected, then up to permutations on $(e_1, e_2)$ we have
\begin{enumerate}
\item $C$ is an elliptic curve with $(e_1, e_2, e_3)=(2,2,0)$, or
\item $C$ is a rational curve with $e_3=2$ and
\begin{itemize}
\item [(i)] $(e_1, e_2, e_3)=(0,1,2)$ ; $\Ee \cong \Oo_X(1,2,1)\oplus \Oo_X(1,0,0)$,
 \item [(ii)] $(e_1, e_2, e_3)=(1,1,2)$ ; $\Ee \cong \Oo_X(1,1,1)\oplus \Oo_X(1,1,0)$,
\item [(iii)] $(e_1, e_2, e_3)=(2,1,2)$ ; $\Ee$ fits into the sequence
\begin{equation}\label{eqdddd1}
0\to \Oo_X(1,0,1) \to \Ee \to \Oo_X(1,2,0) \to 0,
\end{equation}
\item [(iv)] $(e_1, e_2, e_3)=(3,1,2)$,
\item [(v)] $(e_1, e_2, e_3)=(4,1,2)$.
\end{itemize}
\end{enumerate}
All these cases are realized by some globally generated bundles $\Ee$.
\end{theorem}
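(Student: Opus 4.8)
The plan is to split the argument along the dichotomy in the statement: first treat the disconnected case $s\ge 2$, then the connected case. In both cases the strategy mirrors the proof of Theorem~\ref{prop4.2}: use that $\omega_C\cong\Oo_C(0,0,-1)$ when $r=2$, use Lemma~\ref{f1} (here with the roles of the indices permuted so that $\pi_{13|_C}$ and $\pi_{23|_C}$ are embeddings), and bound the multidegree via the complete intersection $Y$ of two general elements of $|\Ii_C(2,2,1)|$, which has multidegree $(4,4,8)$, $\omega_Y\cong\Oo_Y(2,2,0)$, and $p_a(Y)=9$. The recurring mechanism to identify $\Ee$ in the split-type subcases is: compute $h^0(\Oo_C(\mathbf d))$ for a suitable subclass $\mathbf d<c_1$, compare with $h^0(\Oo_X(\mathbf d))$ to force a nonzero map $\Oo_X(c_1-\mathbf d)\to\Ee$, check the relevant $h^0(\Ii_C(\cdot))$ vanishings so the cokernel is $\Ii_T(\mathbf d)$ with $T$ a l.c.i.\ of pure codimension $2$, match $c_2$ to get $T=\emptyset$, and finally read off whether the extension splits from the vanishing of the appropriate $H^1(\Oo_X(\cdot))$.

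For $s\ge 2$: since $\pi_{12|_C}$ is an embedding, a disconnected smooth curve in $\PP^1\times\PP^1$ has bidegree $(s,0)$ or $(0,s)$, so after permuting $(e_1,e_2)$ we may assume $e_2=0$ and each $C_i$ has $e[i]_2=0$. Then $e[i]_1=1$ for all $i$ (so $e_1=s$), and since $\omega_{C_i}(0,0,1)$ must be spanned while $\deg\omega_{C_i}=-2$, elliptic components are excluded unless $e[i]_3\ge 2$; pushing this together with $e_3\le 8$ and $p_a(Y)=9$ (so $Y=C\cup D$ with $D$ of multidegree $(4-s,4,8-e_3)$ and each extra rational component dropping $p_a$) forces $s\le 3$, and a finer count of $\deg(C\cap D)$, exactly as in Lemma~\ref{salto1}, pins down $(e_1,e_2,e_3)=(2,0,4)$ or $(3,0,6)$ with all components of equal multidegree $(1,0,2)$. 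Case (1) is then Proposition~\ref{u1.1}(i). For case (2) one must exhibit an example: take three disjoint conics $C_i=\{p_i\}\times\PP^1$ sitting in $\pi_{13}$-fibers, i.e.\ of multidegree $(1,0,2)$, check that $\Ii_C(2,2,1)$ is globally generated (restrict to $W=\{p_1,p_2,p_3\}\times\PP^1\times\PP^1$ as in the Claim inside the proof of Theorem~\ref{prop4.2}), and verify $h^0(\omega_C(0,0,1))=3=s$ so that Remark~\ref{24nov} yields a rank-$2$ bundle with no trivial factor.

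For the connected case: $C$ is rational except in subcase (1). When $\omega_C(0,0,1)\cong\Oo_C$, i.e.\ $e_3=\deg\omega_C+2$ gives $\deg(\Oo_C(0,0,1))=2$, we have $r=2$; using that $\pi_{13|_C},\pi_{23|_C}$ are embeddings forces $e_1$ or $e_3$ to be $1$ and $e_2$ or $e_3$ to be $1$, and since $e_3=2$ this means $\min(e_1,e_2)$ can range but with $e_1\le 4,e_2\le 4$ (from $Y$), giving the list $(e_1,e_2,e_3)\in\{(0,1,2),(1,1,2),(2,1,2),(3,1,2),(4,1,2)\}$ up to swapping $e_1,e_2$; subcases (i),(ii),(iii) are handled by the extension argument above (for (iii) this is Lemma~\ref{u1}), and (iv),(v) need explicit constructions --- e.g.\ via Remark~\ref{rema}, pulling back from $\PP^1\times\PP^1$ bundles with $c_1=(0,2,1)$ and twisting, or via Hartshorne--Serre on a suitable smooth rational curve of the prescribed multidegree whose ideal sheaf twisted by $(2,2,1)$ is globally generated. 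The elliptic subcase (1) arises when $e_3=0$: then $\deg\omega_C(0,0,1)=0$ forces $p_a(C)=1$, $(e_1,e_2,e_3)=(2,2,0)$ since $\pi_{12|_C}$ embeds $C$ as a $(2,2)$-curve, i.e.\ $C$ is a plane elliptic curve of degree $4$; existence follows from Example~\ref{ooo0}/Hartshorne--Serre once $\Ii_C(2,2,1)$ is checked to be globally generated for a general such $C$.

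I expect the main obstacle to be the existence half in the genuinely new subcases --- $(s;e_1,e_2,e_3)=(3;3,0,6)$ and the connected cases $(3,1,2)$ and $(4,1,2)$ --- where one must both produce a concrete smooth curve of the prescribed multidegree and verify that the twisted ideal sheaf is globally generated with $h^1=0$, typically by a Bertini-plus-semicontinuity argument as in part~(b) of the proof of Proposition~\ref{kk1}, possibly invoking a trisecant count to rule out base components when the curve fails to be linearly normal. The uniqueness/classification half in subcases (i)--(iii) should be routine given the template above, modulo carefully recording all the $H^1$ and $H^0(\Ii_C(\cdot))$ vanishings.
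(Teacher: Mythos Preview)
Your overall architecture matches the paper's proof closely: the paper also organizes by $s=1$ versus $s\ge 2$, uses $\omega_C\cong\Oo_C(0,0,-1)$ to force $e_3=2$ in the rational case, bounds $e_1\le 4$ via $Y$, and runs the ``find a subline bundle, identify the cokernel as $\Ii_T$, compute $c_2$ to kill $T$, check $h^1$ to split'' template for (i)--(iii). One small slip: with $c_1=(2,2,1)$ only $\pi_{12|_C}$ is an embedding by Lemma~\ref{f1} (the third coordinate of $c_1$ is the distinguished $1$), not $\pi_{13|_C}$ or $\pi_{23|_C}$; the correct deduction is that $\pi_{12}(C)\subset\PP^1\times\PP^1$ is a smooth rational curve, hence of bidegree $(e_1,1)$ or $(1,e_2)$, which after permuting gives $e_2=1$.

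Where you genuinely diverge from the paper is in the existence constructions. For (iv) and (v) the paper does \emph{not} appeal to Remark~\ref{rema}; instead it reverse-engineers the extension: for $(3,1,2)$ it builds $\Ee$ as an extension of $\Ii_T(1,1,1)$ by $\Oo_X(1,1,0)$ with $T$ two disjoint lines of multidegree $(1,0,0)$, and for $(4,1,2)$ as an extension of $\Ii_T(1,2,1)$ by $\Oo_X(1,0,0)$ with $T=\PP^1\times Z$ for $Z$ a length-$4$ complete intersection on $\PP^1\times\PP^1$. Your suggestion to use Remark~\ref{rema} with the rank-$2$ bundles on $Q$ of type $(2,1)$ and $c_2\in\{3,4\}$ is a legitimate and arguably cleaner alternative that avoids the auxiliary curves $T$ altogether; it is worth noting that this also recovers (iii) for free when $c_2=2$.

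For $(s;e_1,e_2,e_3)=(3;3,0,6)$ your plan is the weakest point. Your description of the components is garbled (a curve of multidegree $(1,0,2)$ is not of the form $\{p_i\}\times\PP^1$; rather $C_i\subset\PP^1\times\{q_i\}\times\PP^1$ is a $(1,2)$-curve there), and the restriction-to-$W$ argument you borrow from Theorem~\ref{prop4.2} does not transplant directly because the divisor $W$ there had type $(3,0,0)$ whereas here it would be of type $(0,3,0)$ and the residual intersection numbers are different. The paper sidesteps a direct global-generation check entirely: it takes a rank-$3$ spanned bundle $\Gg$ with associated curve of multidegree $(1,4,2)$ (already built in step (e)), chooses a general $6$-dimensional $V\subset H^0(\Gg)$, and sets $\Hh=\ker(V\otimes\Oo_X\to\Gg)^\vee$; the Chern-class calculus then forces the associated curve of $\Hh$ to have multidegree $(3,0,6)$, hence $s=3$. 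This dual-bundle trick is the one nontrivial idea you are missing.
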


\begin{proof}
Until step (f) we assume that $C$ is connected, i.e. $s=1$. Recall that we have $p_a(C) \in \{0,1\}$.

If $C$ is an elliptic curve, i.e. $e_3=0$, then there are $p\in \PP^1$ and $C'\in |\Oo _{\PP^1\times \PP^1}(2,2)|$ such that $C = C'\times \{p\}$, because $C$ is connected. An equation of the divisor of $C'$ in $\PP^1\times \PP^1$ gives $\Ii _{C',\PP^1\times \PP^1}(2,2)\cong \Oo _C$ and thus $\Ii _{C',\PP^1\times \PP^1}(2,2)$ is globally generated. Since $\Oo _X(1,2,1)$ is globally generated and $h^1(\Oo _X(1,0,0)) =0$, so $\Ii _C(2,2,1)$ is also globally generated. In this case we have $h^0(\Ee)=h^0(\Ii_C(2,2,1))+1=11$.

If $C$ is rational, then we have $e_3=2$ since $\omega _C \cong \Oo _C(0,0,-1)$. Since ${\pi _{12}}_{\vert_C}$ is an embedding and $C$ is smooth, connected and rational, so we have either $e_1=1$ or $e_2=1$. Without losing generality we may assume $e_2=1$. Since $Y$ has multidegree $(4,4,8)$, then we have $0\le e_1 \le 4$.

\quad (a) Assume $e_1=0$ and then we have $(e_1, e_2, e_3)=(0,1,2)$. There are $p\in \PP^1$ and $C'\in |\Oo_{\PP^1 \times \PP^1}(2,1)|$ such that $C=\{p\} \times C'$. Here we have $\deg (\Oo_C(1,0,0))+1=2=h^0(\Oo_X(1,0,0))$, $h^1(\Oo_X(-1,-2,-1))=0$ and so $h^0(\Ee(-1,-2,-1))=h^0(\Ii_C(1,0,0))=1$. Hence $\Ee$ fits into an exact sequence
\begin{equation}\label{eqa10}
0\to \Oo_X(1,2,1) \to \Ee \to \Ii_T (1,0,0) \to 0
\end{equation}
with either $T=\emptyset$ or $T$ a locally complete intersection curve. We have $T=\emptyset$, because $c_2(\Oo_X(1,2,1)\oplus \Oo_X(1,0,0))=t_1t_3+2t_1t_2$. Since $h^1(\Oo_X(0,2,1))=0$, so the extension (\ref{eqa10}) is trivial.

\quad (b) Assume $e_1=1$. Since $\deg (\Oo _C(1,1,0)) +1=3 =h^0(\Oo _X(1,1,0)) -1$, so we have $h^0(\Ii _C(1,1,0))\ge 1$. Since $h^1(\Oo _X(-1,0,-1)) =0$, we also get $H^0(\Ee (-1,-1,-1)) \ne 0$. Now since $h^0(\Ii _C(1,0,0)) = h^0(\Ii _C(0,1,0)) =h^0(\Ii _C(1,1,-1))=0$ , we get an exact sequence
\begin{equation}\label{eqm1}
0 \to \Oo _X(1,1,1)\to \Ee \to \Ii _T(1,1,0)\to 0
\end{equation}
with either $T=\emptyset$ or $T$ a locally complete intersection curve. We have  $T= \emptyset$, because $c_2(\Oo _X(1,1,1)\oplus \Oo _X(1,1,0)) = t_3t_1+t_3t_2 +2t_1t_2$.
Since $h^1(\Oo _X(0,0,1)) =0$, we get $\Ee \cong  \Oo _X(1,1,1)\oplus \Oo _X(1,1,0)$. In particular we have $c_2(\Ee)=(1,1,2)$.

\quad (c) Assume $e_1=2$ and then we have $(e_1, e_2, e_3)=(2,1,2)$. This is the case in Lemma \ref{u1}.

\quad (d) Assume $e_1=3$. We have $h^0(\Oo _C(1,1,1)) = 7$ and so $h^0(\Ii _C(1,1,1)) >0$. Since $h^1(\Oo _X(-1,-1,0))=0$, we get $h^0(\Ee (-1,-1,0)) =0$. Since $h^0(\Ii _C(0,1,1)) = h^0(\Ii _C(1,0,1)) =h^0(\Ii _C(1,1,0)) =0$, we get that $\Ee$ fits in the following exact sequence
\begin{equation}\label{eqm2}
0 \to \Oo _X(1,1,0)\to \Ee \to \Ii _T(1,1,1)\to 0
\end{equation}
with either $T=\emptyset$ or $T$ a locally complete $1$-dimensional subscheme with $\omega _T \cong \Oo _T(-2,-2,1)$ and multidegree $(2,0,0)$, because of $c_2(\Oo_X(1,1,1)\oplus \Oo_X(1,1,0))=(1,1,2)$. Conversely, take two distinct points $p_1,p_2\in \PP^1\times \PP^1$ and set $T:= \PP^1\times \{p_1,p_2\}$. Then we get a vector bundle $\Ee$ fitting into the sequence (\ref{eqm2}). Now assume that $p_1, p_2$ are not contained in the same ruling either of $|\Oo _{\PP^1\times \PP^1}(0,1)|$ or of $|\Oo _{\PP^1\times \PP^1}(1,0)|$. Since $\Ii _{\{p_1,p_2\},\PP^1\times \PP^1}(1,1)$ is globally generated, there are divisors $E_1,E_2\in |\Oo _{\PP^1\times \PP^1}(1,1)|$ with $E_1\cap E_2 =\{p_1,p_2\}$ as schemes. Set $H_i:= \PP^1\times E_i$ for $i=1,2$. Since $T = H_1\cap H_2$ as schemes and $H_i\in |\Oo _X(0,1,1)|$, the sheaf $\Ii _T(0,1,1)$
is globally generated and so is $\Ii _T(1,1,1)$. It implies that any bundle $\Ee$ in (\ref{eqm2}) is globally generated.

\quad (e) Assume $e_1=4$. If such a curve $C$ exists, then $h^0(\Oo _C(1,2,1)) = 9<12 = h^0(\Oo _X(1,2,1))$. We also see that $h^0(\Ii _C(0,2,1)) =h^0(\Ii _C(1,2,0)) =0$ and that $h^0(\Ii _C(1,1,1)) < 3 \le h^0(\Ii _C(1,2,1))$. Indeed, for a general $C$, we even have $h^0(\Ii _C(1,1,1)) = h^1(\Ii _C(1,1,1)) =0$. Since $h^1(\Oo _X(-1,0,0)) =0$, we get that $\Ee$ fits in the following exact sequence
\begin{equation}\label{eqm3}
0 \to \Oo _X(1,0,0)\to \Ee \to \Ii _T(1,2,1)\to 0
\end{equation}
with either $T=\emptyset$ or $T$ a locally complete intersection $1$-dimensional scheme with $\omega _T \cong \Oo _T(-2,0,-1)$. Since $t_1(t_1+2t_2+t_3) = t_1t_3+2t_1t_2$ and $c_2(\Ee  )$ is represented by a curve with multidegree $(4,1,2)$, we have
$$\deg (\Oo _T(1,0,0)) =4~,~\deg (\Oo _T(0,1,0)) =  \deg (\Oo _T(0,0,1)) =0.$$
To prove the existence of $\Ee$ we reverse the construction, because any $T$ as above with $\Ii _T(1,2,1)$ globally generated gives a globally generated bundle $\Ee$. Take any complete intersection $Z\subset \PP^1\times \PP^1$ of two elements $D_1,D_2\in |\Oo _{\PP^1\times \PP^1}(2,1)|$ and set $T = \PP^1\times Z$. $\Ii_Z(2,1)$ is globally generated and so is $\Ii_T(0,2,1)$. In particular $\Ii_T(1,2,1)$ is globally generated.

\quad (f) Assume $s>1$ and so we have $e_1e_2=0$. Without losing generality we may assume $e_2=0$. We do this case even when $s=1$. We have $e_1 =s$ and $e_4\le 8$. Since $\omega _C \cong \omega _C(0,0,1)$ and $C_i$ is smooth and rational, we have $e[i]_3=2$ and $e[i]_1=1$ for all $i$.
Take the intersection $Y\subset X$ of two general elements of $|\Ii _C(2,2,1)|$ and then each connected component of $C$ appears with multiplicity one in $Y$. By the Bertini theorem we have $Y = C\cup D$ with $D$ a reduced curve smooth outside $C\cap D$ and
$$\deg (\Oo _D(1,0,0)) = 4-s~,~\deg (\Oo _D(0,1,1)) =4~,~\deg (\Oo _D(0,0,1)) = 8-e_3.$$

\quad {\emph {Claim }}: $s\le 3$.

\quad {\emph {Proof of Claim }}: If $s =4$, then we have $e_3=8$ and so $D$ is the union of $4$ distinct fibers of $\pi _{23}$, i.e. there are four distinct points $p_1,p_2,p_3,p_4\in \PP^1\times \PP^1$ with $D = \PP^1 \times \{p_1,p_2,p_3,p_4\}$. If $D_i$'s are connected components of $D$, then we have $\deg (C \cap D_i)\le 2$, because $\Ii _C(2,2,1)$ is globally generated. Since $C \cup D$ has $8$ irreducible components and each of them is smooth and rational, we have $p_a(Y) \le 1$, a contradiction. Thus we have $s\in \{2,3\}$. \qed

\quad (f1) If $s=2$, then we have $(e_1, e_2, e_3)=(2,0,4)$ and we are in the case of Lemma \ref{u1.1}.

\quad (f2) If  $s=3$, then we have $(e_1, e_2, e_3)=(3,0,6)$ and we may use the case $s=1$ and $(e_1,e_2,e_3) = (1,4,2)$ proved in step (e); it gives bundle for all ranks at most $h^0(\omega _C(0,0,1))+1$. Let $\Gg$ be a spanned bundle of rank $3$ with $c_1(\Gg ) =(2,2,1)$ and with associated curve with multidegree $(1,4,2)$. Then we have $h^0(\Gg ) \ge 6$. Let $V\subseteq H^0(\Gg )$ be a general $6$-dimensional linear subspace. Since $\dim (X) = 3$, the evaluation map $\tau : V\otimes \Oo _X \to \Gg$ is surjective. Set $\Hh := \ker (\tau )^\vee$ and then $\Hh$ is a spanned bundle of rank $3$. The value of $c_2(\Gg)$ gives that $\Hh$ is associated to a curve $C$ with multidegree $(3,0,6)$. Since $\Ii _C(2,2,1)$ is spanned and $C$ has multidegree $(3,0,6)$, it has $s=3$ and so this case is realized.
\end{proof}

By Remark \ref{24nov} the case $s\ge 2$ in Theorem \ref{kkkk2} gives the list of all spanned $\Ee$ with $r\ge 2$, $c_3(\Ee )=0$ and no trivial factor; they have $r\le 3$ if $s=2$, and $r\le 4$ if $s=3$. The proof of Theorem \ref{kkkk2} gives that some families of bundles are parametrized by an irreducible family of curves $C$; the case with $s=2$, the case with $C$ an elliptic curve and case (i) and (ii) for a rational curve. In the other cases the proof gives that the family is irreducible; see step (e) for $(e_1,e_2,e_3) =(4,1,2)$ and step (f2) for the case $s=3$.

\begin{remark}\label{ddd1}
Take $\Ee$ as in Theorem \ref{kkkk2} with $s=1$ and  $(e_1, e_2, e_3)=(2,1,2)$. From (\ref{eqdddd1}) we get $h^1(\Ee (t,t,t)) = h^2(\Ee (t,t,t)) = 0$ for all $t\in \ZZ$. Hence $\Ee$ is ACM. The indecomposable ones, i.e. the ones for which (\ref{eqdddd1}) does not split, are the ones in case (5) of \cite[Theorem B]{CFM0}.
\end{remark}

\begin{remark}\label{ddd2}
Take $\Ee$ as in Theorem \ref{kkkk2} with $s=1$ and $(e_1, e_2, e_3)=(3,1,2)$. From (\ref{eqm2}) we get $h^1(\Ee (-1,-1,-1)) >0$. Look at step (d) of the proof of  Theorem \ref{kkkk2} and assume that $T$ is induced by two distinct points $p_1,p_2$. Since we have $\langle T \rangle \cong \PP^3$, we get $h^1(\Ee (t,t,t)) =0$ for all $t\ne -1$ and $h^1(\Ee (-1,-1,-1)) =1$.
\end{remark}

From now on we take a smooth dependency locus $C\subset X$ of a globally generated bundle $\Ee$ of rank $r\ge 3$. The Hartshorne-Serre condition says that $\omega _C$ is globally generated and so we have $p_a(C_i) >0$.

\begin{theorem}\label{r1}
For globally generated vector bundles of higher rank on $X$ with $c_1=(2,2,1)$ we have the following:
\begin{enumerate}
\item There exists a globally generated vector bundle $\Ee$ of rank $r$ with no trivial factor if and only if $2 \le r \le 17$.
\item For each $r$ with $3\le r\le 17$ there is a bundle as in  Example \ref{ooo0} or Remark \ref{rem3.3.3}. In particular a general dependency locus $C$ is connected and with multidegree $(4,4,8)$, i.e. $c_2(\Ee ) =8t_1t_2+4t_1t_3+4t_2t_3$.
\item If $r\in \{14,15,16,17\}$, then each $\Ee$ comes from Remark \ref{rem3.3.3}.
\end{enumerate}
\end{theorem}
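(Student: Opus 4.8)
The plan is to separate an existence statement from a bound on the rank. For existence (the ``if'' part of (1), which together with the Hartshorne--Serre construction gives (2)): when $r=2$, Theorem~\ref{kkkk2} already exhibits globally generated bundles of rank $2$ with $c_1=(2,2,1)$ and no trivial factor; when $3\le r\le 17$, I would apply Example~\ref{ooo0} (equivalently Example~\ref{ooo1} and Remark~\ref{rem3.3.3}) with $\Ll=\Oo_X(2,2,1)$. Here $r_0+1=h^0(\Ll)=3\cdot3\cdot2=18$, so $r_0=17$, and a complete intersection $Y$ of two general members of $|\Oo_X(2,2,1)|$ is a smooth connected curve of multidegree $(4,4,8)$ with $\omega_Y(0,0,1)\cong\Oo_Y(2,2,1)$ globally generated and $h^0(\omega_Y(0,0,1))=h^0(\Oo_Y(2,2,1))=16$, the last equality read off from the Koszul resolution of $\Ii_Y$ and the vanishing of $h^i(\Oo_X(-2,-2,-1))$ for $i\le 2$. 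The Hartshorne--Serre correspondence then yields, for every $m$ with $2\le m\le 16$, a globally generated bundle of rank $m+1$ with no trivial factor and $Y$ as a dependency locus; these are the bundles in (2) and (3), with $c_2=8t_1t_2+4t_1t_3+4t_2t_3$.

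For the bound, let $\Ee$ be globally generated of rank $r$ with no trivial factor and $c_1=(2,2,1)$, and let $C$ be the smooth dependency locus from~(\ref{equ+}). Then $\Ii_C(2,2,1)$ is globally generated (it is a quotient of $\Ee$), $\omega_C(0,0,1)$ is globally generated, and the Hartshorne--Serre correspondence forces $r\le h^0(\omega_C(0,0,1))+1$, so it suffices to prove $h^0(\omega_C(0,0,1))\le 16$, with equality only for $C=Y$, and in fact $h^0(\omega_C(0,0,1))\le 12$ whenever $C\ne Y$. By Lemma~\ref{f1} the map $\pi_{12}|_C$ is an embedding, so $C\cong\bar C:=\pi_{12}(C)$ is a smooth curve in $Q=\PP^1\times\PP^1$ of multidegree $(e_1,e_2)$ whose components have pairwise disjoint images and satisfy $p_a(C_i)=(e[i]_1-1)(e[i]_2-1)$. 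Writing $Y$ for the complete intersection of two general members of $|\Ii_C(2,2,1)|$, Remarks~\ref{f3} and~\ref{rem3.3.3} give $e_1,e_2\le 4$, $e_3\le 8$, $p_a(Y)=9$, and $C=Y$ iff $\deg C=16$; if $C\ne Y$ then $Y=C\cup D$ with $D\ne\emptyset$, $1\le\ell:=\deg(C\cap D)\le\deg\Oo_D(2,2,1)$, and $p_a(C)=p_a(Y)-p_a(D)-\ell+1\le 9$. Global generation of $\omega_C(0,0,1)$ forces on each component either $e[i]_3\ge 2$, or $e[i]_3=0$ with $g(C_i)\ge 1$ (since $e[i]_3=1$ makes $C_i\cong\PP^1$ with $\omega_{C_i}(0,0,1)\cong\Oo_{\PP^1}(-1)$, and $e[i]_3=0$ needs $\omega_{C_i}$ globally generated).

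The case analysis is then short. If some $e[i]_3=0$, then $g(C_i)\ge 1$ forces $e[i]_1,e[i]_2\ge 2$, and disjointness of the component images forces $C$ connected and contained in a fibre of $\pi_3$, whence $h^0(\omega_C(0,0,1))=p_a(C)=(e_1-1)(e_2-1)\le 9$. Otherwise every $e[i]_3\ge 2$, and Riemann--Roch gives $h^0(\omega_C(0,0,1))=p_a(C)-1+e_3\le 9-1+8=16$; and if $s\ge 2$ then disjointness of the images forces (after permuting the first two factors) every component to have bidegree $(0,1)$, so $p_a(C)=1-s\le -1$ and $h^0(\omega_C(0,0,1))=e_3-s\le 6$. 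Hence $h^0(\omega_C(0,0,1))\ge 13$ forces $s=1$, $p_a(C)=(e_1-1)(e_2-1)$, and $(e_1-1)(e_2-1)+e_3\ge 14$ with $e_1,e_2\le 4$, $e_3\le 8$; this leaves only $\{e_1,e_2\}=\{3,4\}$ with $e_3=8$, and $(e_1,e_2)=(4,4)$ with $5\le e_3\le 8$. The first is impossible: then $D$ is a single line, $p_a(D)=0$, $\ell\le 2$, so $p_a(Y)=6+\ell-1\le 7<9$. The subcase $(e_1,e_2)=(4,4)$, $e_3=8$ is exactly $C=Y$. Everything therefore reduces to excluding $(e_1,e_2)=(4,4)$ with $5\le e_3\le 7$.

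\emph{This last exclusion is the main obstacle.} Here $C\cong\bar C$ is a smooth genus-$9$ curve of degree $15$, $Y=C\cup D$ with $D$ a disjoint union of $8-e_3\ge1$ lines each meeting $C$ in a single point, $h^1(\Ii_C(2,2,1))=0$ (from $0\to\Ii_Y(2,2,1)\to\Ii_C(2,2,1)\to\Oo_D\to0$ and $h^1(\Ii_Y(2,2,1))=0$), hence $h^0(\Ii_C(2,2,1))=10-e_3\ge 3$. The route I would take is to show $\Ii_C(2,2,1)$ is not globally generated, via the conormal bundle: if it were, then $N^\ast_{C/X}(2,2,1)$, which has degree $0$ and rank $2$, would be globally generated, hence $\cong\Oo_C^{\oplus2}$, so $h^0(N^\ast_{C/X}(2,2,1))=2$; on the other hand a $(2,2,1)$-surface singular along $C$ would be singular at each point $C\cap D_j$, hence (since $\deg\Oo_{D_j}(2,2,1)=1$) would contain each $D_j$, hence contain $Y$ and lie in the pencil $|\Ii_Y(2,2,1)|$, whose general member is not singular along $C$ — forcing $h^0(\Ii_C^2(2,2,1))=0$ and so an injection $H^0(\Ii_C(2,2,1))\hookrightarrow H^0(N^\ast_{C/X}(2,2,1))$, i.e. $10-e_3\le 2$, a contradiction. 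Granting this, $h^0(\omega_C(0,0,1))\le 16$ for every $C$ with equality only for $C=Y$, and $h^0(\omega_C(0,0,1))\ge 13$ — hence $r\ge 14$ — forces $C=Y$; combined with $r\le h^0(\omega_C(0,0,1))+1$ and the existence part, this gives $2\le r\le 17$ in (1) and statement (3). The delicate point throughout is precisely the non-generation of $\Ii_C(2,2,1)$ for the curves of multidegree $(4,4,e_3)$, $e_3<8$; the degree count $p_a(C)\le p_a(Y)=9$ makes the plain bound $r\le 17$ itself routine once the Hartshorne--Serre dictionary is in place.
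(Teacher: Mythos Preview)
Your overall architecture matches the paper's: existence from Example~\ref{ooo0}/\ref{ooo1}, then a bound on $h^0(\omega_C(0,0,1))$ via the Hartshorne--Serre dictionary, with the same case split according to $s$ and $(e_1,e_2)$. The treatment of $s\ge 2$, of $e_3=0$, and of $(e_1,e_2)\in\{(3,4),(4,3)\}$ is essentially the paper's argument. (Two cosmetic points: the sequence you use for $h^1(\Ii_C(2,2,1))=0$ should have cokernel $\Ii_{C\cap D,D}$ rather than $\Oo_D$, though the vanishing still holds; and for $r=2$ the paper simply cites $\Oo_X(2,2,0)\oplus\Oo_X(0,0,1)$ rather than Theorem~\ref{kkkk2}.)

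Where you genuinely diverge is the exclusion of $(e_1,e_2)=(4,4)$ with $5\le e_3\le 7$. The paper argues indirectly: it shows, via the $2$-connectedness criterion of Beltrametti--Lanteri for very ample divisors on a surface, that a general $S\in|\Ii_C(2,2,1)|$ would have to be singular, and then derives a contradiction from the Diaz--Harbater strong Bertini theorem. Your conormal-bundle route is more elementary and quite elegant: $N^\ast_{C/X}(2,2,1)$ is globally generated of degree $0$, hence trivial, so $h^0=2$; combined with $h^0(\Ii_C^2(2,2,1))=0$ this forces $h^0(\Ii_C(2,2,1))\le 2$, contradicting $10-e_3\ge 3$.

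There is, however, a real gap in your justification of $h^0(\Ii_C^2(2,2,1))=0$. From ``any such $S$ lies in the pencil $|\Ii_Y(2,2,1)|$, whose general member is not singular along $C$'' you can only conclude $h^0(\Ii_C^2(2,2,1))\le 1$ (proper containment in a $2$-dimensional space), which yields $10-e_3\le 3$ and leaves $e_3=7$ untouched. The fix is immediate and in the spirit of your own argument: do not restrict to the finitely many residual lines $D_j$, but observe that for \emph{every} $p\in\bar C:=\pi_{12}(C)$ the fibre $L_p=\{p\}\times\PP^1$ meets $C$ (Lemma~\ref{f1}) and has $\deg\Oo_{L_p}(2,2,1)=1$, so any $S\in|\Ii_C^2(2,2,1)|$ contains every $L_p$ and hence the divisor $\bar C\times\PP^1\in|\Oo_X(4,4,0)|$. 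Since $h^0(\Oo_X(-2,-2,1))=0$, no $(2,2,1)$-surface can contain a $(4,4,0)$-divisor, so $h^0(\Ii_C^2(2,2,1))=0$ and your contradiction goes through for all $e_3\le 7$. With this patch your proof is complete and gives a pleasant alternative to the paper's appeal to $k$-connectedness and strong Bertini.
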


\begin{proof}
The case $r=2$ is covered by $\Oo _X(2,2,0)\oplus \Oo _X(0,0,1)$. If $3 \le r \le 17$, the existence is given either by  Example \ref{ooo0} or Example \ref{ooo1}.

Fix $r \in \{14,15,16,17\}$ and assume the existence of $\Ee$ not coming from Remark \ref{rem3.3.3} with a dependency locus $C\subset X$ of $r-1$ general sections of $\Ee$. Let $Y =C\cup D$ be the intersection of $2$ general elements of $|\Ii _C(2,2,1)|$. By the Hartshorne-Serre correspondence it is sufficient to prove that $h^0(\omega _C(0,0,1)) \le 12$ if $\Ee$ does not come from Remark \ref{rem3.3.3}, i.e. if $D\ne \emptyset$. Note that $D$ is reduced with multidegree $(4-e_1,4-e_2,8-e_3)$.

\quad ({a}) Assume $s=1$ and set $g:= p_a({C})$. Since $\pi _{12}({C}) \cong C$, we have $g = e_1e_2-e_1-e_2+1$. Since $\omega _C(0,0,1)$ is spanned, we have $e_3\ne 1$. If $e_3=0$, then we get $h^0(\omega _C(0,0,1)) = h^0(\omega _C) = g\le 9$. If $e_3\ge 2$, then Riemann-Roch gives $h^0(\omega _C(0,0,1)) = g+e_3-1 =e_1e_2-e_1-e_2+e_3$.

\quad ({a1}) Assume $(e_1,e_2) =(4,4)$. In this case $D$ has multidegree $(0,0,8-e_3)$. So $D$ has $8-e_3$ connected components $D_1,\dots ,D_{8-e_3}$ and there are $o_j\in \PP^1\times \PP^1$ for each $1\le j \le 8-e_3$ such that $D _j = \{o_j\}\times \PP^1$. We have $o_i\ne o_j$ for all $i\ne j$. Since $C\cup D$ is a complete intersection of two ample divisors, it is connected. Since $p_a({C})=p_a(C\cup D)$, each $D_j$ meets quasi-transversally $C$ at a unique point, say $Q_j$.

\quad {\emph {Claim }}: A general $S\in |\Ii _C(2,2,1)|$ is singular.

\quad {\emph {Proof of Claim }}: Assume that $S$ is smooth. We have $C\cup D\in |\Oo _S(2,2,1)|$ and $\{Q_1\}$ is the scheme-theoretic intersection of $D_1$ and $Y-D_1$, i.e. $D_1\cdot (Y-D_1) =1$ the intersection number in the smooth surface $S$. Hence the very ample line bundle $\Oo _S(2,2,1)$ is not $2$-connected in the sense of \cite{bdl,bl1,ven,bl2}. We have $\Oo _S(2,2,1)\cdot \Oo _S(2,2,1) = (2t_1+2t_2+t_3)^3 = 12>4$. Since $S$ contains only finitely many curves $\{o\}\times \PP^1$ with $o\in \PP^1\times \PP^1$, so the pair $(S,\Oo _S(2,2,1))$ is not a scroll in the sense of \cite{bl1,ven}. Since $(S,\Oo _S(2,2,1))$ is not $2$-connected, \cite[Theorem A]{bl1} gives a contradiction. \qed

Since $C$ is a smooth curve, $\dim (X) =2+\dim ({C})$ and $C$ is the scheme-theoretic base locus of $\Ii _C(2,2,1)$, Claim 1 contradicts a strong form of Bertini's theorem for linear systems with a smooth base locus in \cite[Theorem 2.1]{diaz}. In summary, this contradiction rules out all the cases with $s=1$ and multidegrees $(4,4,e_3)$ with $0 \le e_3\le 7$.

\quad ({a2}) If $(e_1,e_2)\ne (4,4)$, then we have $g\le 6$ and so $h^0(\omega _C(0,0,1)) \le 6+e_3-1\le 13$, with equality only if $(e_1,e_2) \in \{(3,4),(4,3)\}$ and $e_3=8$. In this case $D$ has either multidegree $(1,0,0)$ or $(0,1,0)$. In particular $D$ is smooth and rational. Since $p_a(C\cup D)=9$, we have $\deg (D\cap C)>2$ and so $\Ii _C(2,2,1)$ is not globally generated, a contradiction. Thus this case may give solutions only with $r\le 13$.

\quad ({b}) Assume $s>1$. We saw that each $C_i$ is smooth and rational with $e[i]_3\ge 2$ for all $i$. We have $h^0(\omega _C(0,0,1)) = \sum _{i=1}^{s} (e[i]_3-1) \le 8-s \le 6$. If $C$ is not connected, we get $r\le 7$.
\end{proof}

We give several examples of globally generated vector bundles of rank at least $3$ on $X$ with $c_1=(2,2,1)$.

\begin{example}\label{s5}
Since $h^1(\Oo _X(-2,1,0))=2$ and $h^1(\Oo _X(-2,1,1))=4$, there are non-trivial extensions
\begin{equation}\label{equ2}
0 \to \Oo _X(0,1,1)\oplus \Oo _X(0,1,0) \to \Ee \to \Oo _X(2,0,0)\to 0
\end{equation}
and $\Ext^1$ is a $6$-dimensional vector space. So we get indecomposable bundles with $c_1=(2,2,1)$ and $c_2=(2,1,4)$. We may construct these bundles also as extensions of $\Ee$ in Example \ref{s1} by $\Oo_X(0,1,0)$.
\end{example}

\begin{example}\label{s7}
Since $h^1(\Oo _X(-2,0,1))=2$ and $h^1(\Oo _X(-2,2,0))=3$, there are non-trivial extensions
\begin{equation}\label{equ2}
0 \to \Oo _X(0,0,1)\oplus \Oo _X(0,2,0) \to \Ee \to \Oo _X(2,0,0)\to 0
\end{equation}
and $\Ext^1$ is a $5$-dimensional vector space. So we get indecomposable bundles with $c_1=(2,2,1)$ and $c_2=(2,2,4)$.
\end{example}

\begin{example}
For the bundles $\Ff_{\lambda}$ in Example \ref{s4}, we may compute $h^1(\Ff_{\lambda}^\vee(0,1,0))\geq 4$ so we have  a family $\{\Hh _\nu \}$ of indecomposable bundles with $c_1=(2,2,1)$ and $c_2=(3,3,5)$.
\end{example}

\begin{example}\label{s6}
Let $\Gg=\phi^*(T\PP^3(-1))$, where $\phi : X \to \PP^3$ is a linear projection from a linear subspace $W\subset \PP^7$ with $\dim (W) = 3$ and $W\cap X=\emptyset$.
From the Euler sequence of $T\PP^3$ we get an exact sequence
\begin{equation}\label{eqeeaa1}
0 \to \Gg ^\vee \to \Oo _X^{\oplus 4} \to \Oo _X(1,1,1) \to 0
\end{equation}
From (\ref{eqeeaa1}) we get $\dim \Ext^1 (\Gg, \Oo(1,1,0))= h^1(\Gg^\vee(1,1,0)) \geq 18 - 16 =2$ and so we have a family $\{\Ff _\lambda \}$ of non-trivial extensions of  $\Gg$ by $\Oo(1,1,0)$ with
$c_1=(2,2,1)$ and $c_2=(3,3,4)$ with the exact sequence
\begin{equation}\label{eeqqaa}
0 \to \Oo_X^{\oplus 3} \to \Ff_{\lambda} \to \Ii_C(2,2,1) \to 0,
\end{equation}
where $C$ is a smooth curve of multidegree $(3,3,4)$, and the exact sequence
\begin{equation}\label{eqeeaa2}
0 \to \Oo _X(1,1,0) \to \Ff _{\lambda} \to \Gg \to 0.
\end{equation}
Since $h^2(\Oo _X(-2,-2,-1)) =0$, from the dual of (\ref{eqeeaa1}) we get $h^0(\Gg (-1,-1,0)) = h^1(\Gg (-1,-1,0)) =0$.
From (\ref{eqeeaa2}) we also get $h^0(\Ff_{\lambda}(-1,-1,0)) =1$ and $h^1(\Ff_{\lambda}(-1,-1,0)) =0$. From the sequence (\ref{eeqqaa}) we get $h^0(\Ii_C(1,1,1))=1$, $h^1(\Ii _C(1,1,1)) =1$ and so $h^0(\Oo_C(1,1,1))=7$. It implies $p_a({C})=4$. Let $\Ll:=\omega_C(-1,-1,0))$ be a line bundle on $X$ of degree $0$. From the exact sequence
$$0\to \Ii_C(1,1,0) \to \Oo_X(1,1,0) \to \Oo_C(1,1,0) \to 0$$
together with $h^0(\Ii_C(1,1,0))=0$ from (\ref{eeqqaa}), we have $h^0(\Oo_C(1,1,0))\ge 4$. By Riemann-Roch, we have $h^0(\Ll)-h^1(\Ll)=3$ and so $h^0(\Ll)\ge 1$ since $h^1(\Ll)=h^0(\Oo_C(1,1,0))$. Thus we have $h^0(\Ll)=1$ and $\omega_C (-1,-1,0)\cong \Oo_C$. Now the Hartshorne-Serre correspondence implies the existence of a globally generated vector bundle $\Hh$ fitting into the sequence
$$0\to \Oo_X \to \Hh \to \Ii_C(3,3,2) \to 0.$$
Note that $h^0(\Hh(-2,-2,-1))=h^0(\Ii_C(1,1,1))=h^0(\Ff_{\lambda}(-1,-1,0))=1$. From the sequence (\ref{eeqqaa}), we also have $h^0(\Hh(-3,-2,-1))=h^0(\Hh(-2,-3,-1))=h^0(\Hh(-2,-2,-2))=0$ and so a non-zero section in $H^0(\Hh(-2,-2,-1))$ induces an exact sequence
$$0\to \Oo_X(2,2,1) \to \Hh \to \Ii_T(1,1,1) \to 0$$
with $T$ a locally complete intersection $0$-dimensional subscheme of codimension $2$. Since $c_2(\Hh)=(3,3,4)$, so we get $T=\emptyset$. Since $h^1(\Oo_X(1,1,0))=0$, so the extension is trivial. Thus we have $\Hh \cong \Oo_X(2,2,1)\oplus \Oo_X(1,1,1)$ and $\Ii_C$ admits the following locally free resolution:
$$0\to\Oo_X(-1,-1,-1)\to\Oo_X\oplus\Oo_X(1,1,0)\to\Ii_C(2,2,1)\to 0.$$
Moreover $h^1(\Ff_{\lambda}^\vee)=4$ and so we have higher rank bundles up to $r=8$ with the same Chern classes and no trivial factor.
\end{example}


\section{Case of $c_1=(2,2,2)$}
Let $\Ee$ be a globally generated vector bundle of rank $r\ge 2$ on $X$ with $c_1(\Ee)=(2,2,2)$ and the associated curve $C = C_1\sqcup \cdots \sqcup C_s$ where each $C_i$ is a smooth and connected component. If $Y$ is the complete intersection of two elements in $|\Oo_X(2,2,2)|$, then it has
Chern polynomial $(1+2t_1+2t_2+2t_3)^2$ and hence it has has multidegree $(8,8,8)$. We also have $\omega _Y \cong \Oo _Y(2,2,2)$ and so $h^1(\Oo _Y) =p_a(Y) = 25$. The Hartshorne-Serre condition says that $\omega _C$ is spanned, i.e. $p_a(C_i)>0$ for all $i$. If $r=2$, then $\omega _C \cong \Oo _C$ and hence each $C_i$ is an elliptic curve.

From now on we take a smooth dependency locus $C\subset X$ of a globally generated bundle $\Ee$ of rank $r\ge 3$. The Hartshorne-Serre condition says that $\omega _C$ is globally generated and so we have
$p_a(C_i) >0$.

\begin{theorem}\label{r2}
For globally generated vector bundles of higher rank on $X$ with $c_1=(2,2,2)$ we have the following:
\begin{enumerate}
\item There exists a globally generated vector bundle $\Ee$ of rank $r$ with no trivial factor if and only if $2 \le r \le 26$.
\item For each $3\le r\le 26$, there is a bundle as in Example \ref{ooo0} or Remark \ref{rem3.3.3}. In particular a general dependency locus $C$ is connected with multidegree $(8,8,8)$, i.e. $c_2(\Ee ) =8t_1t_2+8t_1t_3+8t_2t_3$.
\item If $r \in \{24,25,26\}$, then each $\Ee$ comes from Remark \ref{rem3.3.3}.
\end{enumerate}
\end{theorem}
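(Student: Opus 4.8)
The proof will run parallel to that of Theorem \ref{r1}. For the ``if'' direction of (1) together with (2): rank $2$ is realised by $\Oo_X(2,2,0)\oplus\Oo_X(0,0,2)$, while for $3\le r\le 26$ one takes $C=Y$ to be the complete intersection of two general elements of $|\Oo_X(2,2,2)|$. Since $r_0:=h^0(\Oo_X(2,2,2))-1=26$, Example \ref{ooo1} (equivalently Remark \ref{rem3.3.3}) yields for each such $r$ a globally generated bundle with no trivial factor whose general dependency locus is $Y$, and $Y$ is connected of multidegree $(8,8,8)$ with $h^0(\omega_Y)=r_0-1=25$. As a bundle of rank $r$ with no trivial factor has $r\le h^0(\omega_C)+1$ for its general dependency locus $C$ by Hartshorne--Serre, the ``only if'' direction of (1) and statement (3) both follow once we prove: $h^0(\omega_C)\le 25$ always, with equality forcing $C=Y$, and moreover $h^0(\omega_C)\le 22$ whenever $C\subsetneq Y$.

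So fix a globally generated $\Ee$ of rank $r\ge 3$ with no trivial factor and general dependency locus $C=C_1\sqcup\cdots\sqcup C_s$; then $C$ is smooth, $\omega_C$ is globally generated (so $g_i:=p_a(C_i)\ge 1$ for all $i$), and $\Ii_C(2,2,2)$ is globally generated, hence $h^0(\Ii_C(2,2,2))\ge 2$. Let $Y=C\cup D$ be the complete intersection of two general elements $S,S'\in|\Ii_C(2,2,2)|$ as in Remark \ref{uuu00}, and assume $D\ne\emptyset$. The crucial point is that a general $S\in|\Ii_C(2,2,2)|$ is a \emph{smooth $K3$ surface}: since $\Ii_C(2,2,2)$ is globally generated and $C$ is smooth, a general $S$ is smooth (Bertini away from $C$, plus a direct local argument along $C$), while adjunction together with $h^1(\Oo_X)=h^1(\Oo_X(-2,-2,-2))=0$ gives $\omega_S\cong\Oo_S$ and $h^1(\Oo_S)=0$. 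On $S$ put $H:=\Oo_S(1,1,1)$, so $H^2=12$, $Y\in|2H|$, and, $C$ having multiplicity one in $Y$, $[C]+[D]=2H$ in $\Pic(S)$; write $d:=\deg_{\PP^7}C=H\cdot C\le 23$ (as $D\ne\emptyset$ and $\deg_{\PP^7}Y=24$). Note $Y$ is connected, being a complete intersection of ample divisors.

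Suppose first $s=1$. Adjunction on the $K3$ gives $C^2=2g-2$ with $g=p_a(C)=h^0(\omega_C)$, and $C\cdot D=[C]\cdot(2H-[C])=2d-(2g-2)$; since $Y$ is connected and $D\ne\emptyset$ this is $\ge 1$, hence $g\le d\le 23$. If $g=23$ then $d=23$ and $\deg D=1$, so $D$ is a line and $[D]^2=-2$; but $[D]^2=(2H-[C])^2=48-4d+C^2=48-92+44=0$, a contradiction. Hence $h^0(\omega_C)=g\le 22$.

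Now suppose $s\ge 2$. The classes $[C_1],\dots,[C_s]$ are pairwise orthogonal in $\Pic(S)$ (the components are disjoint) with $[C_i]^2=2g_i-2\ge 0$, so by the Hodge index theorem at most one of them is positive; thus at most one component, say $C_{i_0}$, has genus $\ge 2$, and the other $s-1$ components are elliptic, of $\PP^7$-degree $\ge 3$ each (no line bundle of degree $\le 2$ on a curve of genus $\ge 1$ is very ample). Therefore $d\ge d_{i_0}+3(s-1)$, while Hodge index on $S$ gives $g_{i_0}\le d_{i_0}^2/24+1$. Together with $d\le 23$ this forces $s\le 7$, and then a short case check on $s\in\{2,\dots,7\}$ yields $h^0(\omega_C)=\sum_i g_i\le 22$. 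Consequently $r\le h^0(\omega_C)+1\le 23$ whenever $C\subsetneq Y$, so $r\le 26$ in all cases and $r\ge 24$ forces $D=\emptyset$, i.e. $C=Y$. The main obstacle is precisely the bound $h^0(\omega_C)\le 22$ for $C\subsetneq Y$: it rests on identifying the auxiliary divisor $S$ as a $K3$ surface and then squeezing the needed genus estimates -- both the bound for the leading component and the fact that a disconnected $C$ has all but one component elliptic -- out of the Hodge index theorem.
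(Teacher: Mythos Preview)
Your proof is correct and reaches the same conclusion as the paper, but by a genuinely different route after the common setup.

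Both arguments begin the same way: realize the existence part via $\Oo_X(2,2,0)\oplus\Oo_X(0,0,2)$ and Example~\ref{ooo1}, reduce the upper bound and part~(3) to showing $h^0(\omega_C)\le 22$ whenever $D\ne\emptyset$, and observe that a general $S\in|\Ii_C(2,2,2)|$ is a smooth $K3$ surface (the paper invokes the Diaz--Harbater strong Bertini theorem for this; your ``direct local argument along $C$'' is the same statement, though you might cite \cite{diaz} to match the paper's level of precision).

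The divergence is in how the genus bound is extracted. The paper proves that the very ample class $\Oo_S(2,2,2)$ is $4$-connected, quoting three successive theorems of Beltrametti--Lanteri \cite{bl1,bl2}, and then feeds $C_s\cdot D[s-1]\ge 4t$ into a Mayer--Vietoris sequence to obtain $h^1(\Oo_Y)\ge 3+h^1(\Oo_C)$ in one stroke, uniformly in $s$. You instead work directly in $\Pic(S)$: for $s=1$ you use connectedness of $Y$ to get $C\cdot D\ge 1$, hence $g\le d\le 23$, and then kill $g=23$ by the self-intersection mismatch $D^2=(2H-C)^2=0\ne -2$; for $s\ge 2$ you invoke the Hodge index theorem twice---once to force all but one $C_i$ to be elliptic (pairwise orthogonal classes with positive self-intersection would give a $2$-dimensional positive-definite subspace), and once to bound $g_{i_0}\le d_{i_0}^2/24+1$---and finish with the degree constraint $d\le 23$ and elliptic components having degree $\ge 3$. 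The resulting case check over $s\in\{2,\dots,7\}$ is easy (the maximum of $\sum g_i$ is $18$, attained at $s=2$).

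Your approach trades the external $k$-connectedness machinery for the more standard Hodge index theorem, making the argument more self-contained; the cost is the small case analysis in the disconnected situation, whereas the paper's $4$-connectedness plus Mayer--Vietoris handles all $s$ uniformly. Both are clean; yours is arguably more elementary.
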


\begin{proof}
Note that the case $r=2$ is covered by $\Oo _X(2,2,0)\oplus \Oo _X(0,0,2)$. If $3 \le r \le 26$, the existence is given either by  Example \ref{ooo0} or by Remark \ref{rem3.3.3}.

Fix $r \in \{24,25,26\}$ and assume the existence of $\Ee$ not coming from Remark \ref{rem3.3.3} with  a dependency locus $C\subset X$ of $r-1$ general sections of $\Ee$. Let $S$ be a general element of $|\Ii _C(2,2,2)|$. Since $C$ is a smooth curve, we get that $S$ is a smooth surface by Diaz-Harbater form of the Bertini theorem in \cite[Theorem 2.1]{diaz}. The adjunction formula gives $\omega _S \cong \Oo _S$. Let $T\subset S$ be a line of $S\subset \PP^7$, if any. The adjunction formula gives $\omega _T \cong \Oo _S(T)_{\vert_T}$. Since $\deg (\omega _T)=-2$, we get
$T^2=-2<0$ and so $S$ contains only finitely many lines.

Let $Y =C\cup D$ be the intersection of $S$ with a general element of $|\Ii _C(2,2,2)|$. By the Hartshorne-Serre correspondence it is sufficient to prove that $h^0(\omega _C )\le 22$ if $\Ee$ does not come from Remark \ref{rem3.3.3}, i.e. $D\ne \emptyset$. Now by duality it is sufficient to prove that $h^1(\Oo _C)\le 22$ if $D\ne \emptyset$. We have $h^1(\Oo _C) = \sum _{i=1}^{s} h^1(\Oo _{C_i})$. Note that $D$ is reduced with multidegree $(8-e_1,8-e_2,8-e_3)$. Recall that $h^1(\Oo _Y)=25$ and so it is sufficient to prove
that $h^1(\Oo _C)\le h^1(\Oo _Y)-3$.

\quad {\emph {Claim }}: The line bundle $\Oo _S(2,2,2)$ is $4$-connected.

\quad {\emph {Proof of Claim }}: Set $\Ll := \Oo _S(1,1,1)$. Since $S\in |\Oo _X(2,2,2)|$, so we have $\Oo _S(2,2,2)\cdot \Oo _S(2,2,2) = (2t_1+2t_2+2t_3)^3 = 24 >17$. Since $\Ll$ is very ample and $\Oo _S(2,2,2) = \Ll^{\otimes 2}$, for every irreducible curve $T\subset S$ we have
$$2\le T\cdot \Oo _S(2,2,2)\equiv 0 \pmod 2,$$
and $T\cdot \Oo _S(2,2,2) =2$ if and only if $T$ is a line contained in $S$. Since $T\cdot \Oo _S(2,2,2)$ is an even integer, the pair $(S,\Oo _S(2,2,2))$ is neither a $\PP^1$-bundle over a curve nor a cubic scroll over a curve. By \cite[Theorem A]{bl1}, the line bundle $\Oo _S(2,2,2)$ is $2$-connected. Since $T^2=-2$ for each line $T$ of $S$ and $\deg (T\cdot \Oo _S(2,2,2)) \ge 4$ for each curve $T\subset S$, which is not a line, so \cite[Theorem B]{bl1} and \cite{bl2} give first that $\Oo _S(2,2,2)$ is $3$-connected and then \cite[Theorem C]{bl1} gives that $\Oo _S(2,2,2)$ is $4$-connected. \qed

Write $D[0]:= D$ and $D[s-1]:= D\cup C_1\cup \cdots \cup C_{s-1}$ if $s\ge 2$. Let $E_1,\dots ,E_t$ be the connected components of $D[s-1]$. Since $\Oo _S(2,2,2)$ is $4$-connected  by the \emph{Claim} and $E_i\cap E_j=\emptyset$ for all $i\ge j$, we have $\deg (D[s-1]\cap C_s) \ge 4t$. Since $h^0(\Oo _{D[s-1]}) =t$ and $h^2(\Oo _Y)=0$, the Mayer-Vietoris exact sequence
$$0 \to \Oo _Y\to \Oo _{D[s-1]}\oplus \Oo _{C_s} \to \Oo _{D[s-1]\cap C_s}\to 0$$gives $h^1(\Oo _Y ) \ge 4t-t+h^1(\Oo _{C_s}) +h^1(\Oo _{D[s-1]}) \ge 3+ h^1(\Oo _{C_s}) +h^1(\Oo _{D[s-1]})$. If $s=1$, then we use that $h^1(\Oo _D)\ge 0$. Now assume $s\ge 2$. Let $A, B$ be projective curves with $A\subset B$. Since $\Ii _{A,B}$ is supported by a subscheme of $B$, we have $h^2(B,\Ii _{A,B})=0$. Hence the exact sequence
$$0 \to \Ii _{A,B}\to \Oo _B\to \Oo _A\to 0$$gives that the natural map $H^1(\Oo _B) \to H^1(\Oo _A)$ is surjective. With $A=C_1\cup \cdots \cup C_{s-1}$ and $B=D[s-1]$, we have $h^1(\Oo _{D[s-1]}) \ge h^1(\Oo _{C_1\cup \cdots \cup C_{s-1}})$ and so $h^1(\Oo _Y)\ge 3+h^1(\Oo _C)$.
\end{proof}

\begin{remark}\label{v1}
Take a smooth $C$ with $\Ii _C(2,2,2)$ and $\omega _C$ globally generate. Each $C_i$ has positive genus. Since $C_i$ is not rational, we have $e[i]_j\ne 1$ for all $j=1,2,3$. For fixed $i\in \{1,\dots ,s\}$, assume for the moment $e[i]_j=0$ for some $j\in \{1,2,3\}$. Just to fix the notation we assume $j=1$. Then there are $p\in \PP^1$ and $C'_i\in |\Oo _{\PP^1\times \PP^1}(e[i]_3,e[i]_2)|$ such
that $C_i = \{p\}\times C'_i$. Since $C'_i\subset \PP^1\times \PP^1$ has positive genus and $\{p\}\times \PP^1\times \PP^1$ is not in the base locus of $\Ii _C(2,2,2)$, we get $e[i]_2=e[i]_3=2$. Now assume $e[i]_j\ne 0$ for all $j$. Then we get $e[i]_j\ge 2$ for all $j$.

Assume $s\ge 2$ and $e[1]_3=0$.

\quad {\emph {Claim 1}}: $e_3=0$, i.e. $e[i]_3=0$ for all $i>1$.

\quad {\emph {Proof of Claim 1}}: Assume for instance $e[2]_3>0$. Since $e[1]_3=0$, there are $p\in \PP^1$ and $C'_1\in |\Oo _{\PP^1\times \PP^1}(2,2)|$ such that $C_1 = C'_1\times \{p\}$. Since $e[2]_3>0$, the scheme $Z:= C_2\cap \PP^1\times \PP^1\times \{p\}$ is non-empty (if $0$-dimensional, it has degree $e[2]_3$). Write $Z = Z'\times \{p\}$. Since $C_1\cap C_2 =\emptyset$ and $Z'\ne \emptyset$, we have $h^0(\PP^1\times \PP^1,\Ii _{Z'\cup C'_1,\PP^\times \PP^1}(2,2)) =0$. Hence $\PP^1\times \PP^1 \times \{p\}$ is in the base locus of $\Ii _C(2,2,2)$, a contradiction. \qed

By \emph{Claim 1}, if there are $i, j$ such that $1\le i \le s$ and $1\le j \le 3$ with $e[i]_j=0$, then we have $e_j=0$ and $e[i]_k=2$ for all $i$ and $k\ne j$. If $e[i]_j \ne 0$ for all $i, j$,
then we have $e[i]_j \ge 2$ for all $i, j$.

\quad {\emph {Claim 2}}: Assume $e[i]_j \ne 0$ for all $i, j$. Then $s\le 3$.

\quad {\emph {Proof of Claim 2}}: Since $e[i]_j\ge 2$ for all $i, j$ by \emph{Claim 1} and $Y$ has multidegree $(8,8,8)$, so we have $s\le 4$ and if $s=4$, then $C=Y$. This is not possible, because
$h^0(\Oo _Y)=1$ (see Remark \ref{rem3.3.3}). \qed

\quad {\emph {Claim 3}}: We have $s\le 3$.

\quad {\emph {Proof of Claim 3}}: By \emph{Claim 2} we may assume $e[i]_j=0$ for some $i, j$, say $e[1]_3=0$. By \emph{Claim 1} we have $e[i]_3=0$ and $e[i]_1=e[i]_2 =2$
for all $i$ and there are $o_i\in \PP^1$ and $C'_i\in  |\Oo _{\PP^1\times \PP^1}(2,2)|$ such
that $C_i = C'_i\times \{o_i\}$. Since $Y$ has multidegree $(8,8,8)$, we get $s\le 4$. Assume $s=4$ and write $Y = C\cup D$. We get that $D$ is the disjoint union of $8$ lines with
multidegree $(0,0,1)$, say $D = D_1\cup \cdots \cup D_8$. Since $\Ii _C(2,2,2)$ is globally generated, we get $\deg (D_i\cap C)\le 2$ for all $i$. Hence we have $p_a(Y) \le -3+8$, a contradiction. \qed
\end{remark}

\begin{example}\label{v4}
Let $Z\subset \PP^1\times \PP^1$ be any $0$-dimensional subscheme of degree $2$ and set $T:= Z\times \PP^1$. Since $\omega _T \cong \Oo _T(-2,-2,-2)$, the Hartshorne-Serre correspondence gives a unique bundle $\Ee$ fitting into an exact sequence
\begin{equation}\label{eqv2}
0\to \Oo _X(1,1,1)\to \Ee \to \Ii _T(1,1,1)\to 0.
\end{equation}
Any such a bundle $\Ee$ is globally generated if and only if $Z$ is the complete intersection of two elements of $|\Oo _{\PP^1\times \PP^1}(1,1)|$, i.e. $Z$ is not contained in one of the rulings of $\PP^1\times \PP^1$.
\end{example}

\begin{proposition}\label{v2}
We have $c_2(\Ee)=(4,4,0)$ if and only if we have either
\begin{itemize}
\item [(i)] $r=2$ and $0\to \Oo _X(2,2,0)\to \Ee \to \Oo _X(0,0,2)\to 0$ , or
\item [(ii)] $r=3$ and $\Ee \cong \Oo _X(2,2,0)\oplus \Oo _X(0,0,1)^{\oplus 2}$.
\end{itemize}
\end{proposition}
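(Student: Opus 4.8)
The forward implication is immediate: if $\Ee$ is as in (i) then $c(\Ee)=(1+2t_1+2t_2)(1+2t_3)$, and if $\Ee$ is as in (ii) then $c(\Ee)=(1+2t_1+2t_2)(1+t_3)^2=(1+2t_1+2t_2)(1+2t_3)$; in either case $c_2(\Ee)=4t_1t_3+4t_2t_3$, i.e. $(4,4,0)$. Both cases occur: (ii) is visibly globally generated, and for (i) one may take the split extension $\Oo_X(2,2,0)\oplus\Oo_X(0,0,2)$. So assume $c_2(\Ee)=(4,4,0)$ and let $C=C_1\sqcup\cdots\sqcup C_s$ be the associated curve, of multidegree $(4,4,0)$. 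Since $c_1=(2,2,2)=c_1(X)$, the Hartshorne--Serre correspondence forces $\omega_C\cong\Oo_C$, hence $\omega_{C_i}\cong\Oo_{C_i}$ for all $i$; moreover $h^0(\omega_C)=h^0(\Oo_C)=s$, so Remark \ref{24nov} gives $2\le r\le s+1$. Because $e_3=\deg\Oo_C(0,0,1)=0$, the map $\pi_3$ is constant on each $C_i$, so $C_i$ lies in a fibre $X_{o_i}:=\PP^1\times\PP^1\times\{o_i\}$; being a smooth connected curve of genus one there, $C_i$ has bidegree $(2,2)$. Thus $e[i]_1=2$ for every $i$, and $\sum_i e[i]_1=e_1=4$ gives $s=2$, while $o_1\ne o_2$ since two curves of bidegree $(2,2)$ in one copy of $\PP^1\times\PP^1$ always meet. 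In particular $r\in\{2,3\}$.

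Next I would compute $h^0(\Ii_C(0,0,2))$. A section of $\Oo_X(0,0,2)$ is a binary quadratic form $q$ in the third coordinates; it restricts to the scalar $q(o_i)$ on the fibre $X_{o_i}$, so it vanishes on $C$ exactly when $q(o_1)=q(o_2)=0$, and since $o_1\ne o_2$ this space is one-dimensional. Twisting (\ref{equ+}) by $\Oo_X(-2,-2,0)$ and using $h^0(\Oo_X(-2,-2,0))=h^1(\Oo_X(-2,-2,0))=0$ gives $h^0(\Ee(-2,-2,0))=h^0(\Ii_C(0,0,2))=1$, hence an injection $f\colon\Oo_X(2,2,0)\hookrightarrow\Ee$. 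Since $h^0(\Ii_C(0,0,1))=h^0(\Ii_C)=0$, the image of $f$ is saturated, so $\Gg:=\mathrm{coker}(f)$ is torsion-free of rank $r-1$ with $c_1(\Gg)=(0,0,2)$ and, by the Whitney formula, $c_2(\Gg)=c_2(\Ee)-(2t_1+2t_2)(2t_3)=0$.

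If $r=2$, then $\Gg$ is torsion-free of rank $1$ with $c_1=(0,0,2)$ and $c_2=c_3=0$ (the vanishing of $c_3(\Gg)=c_3(\Ee)$ is automatic since $\Ee$ has rank $2$), hence $\Gg\cong\Oo_X(0,0,2)$, which is case (i). If $r=3$, then $\Gg$ has rank $2$, is spanned (a quotient of the spanned $\Ee$), is reflexive since $h^1(\Gg(-t))=0$ for $t\gg0$ by \cite[Remark 2.5.1]{Hartshorne1} (in the twisted sequence one has $h^1(\Ee(-t))=0$ and $h^2(\Oo_X(2-t,2-t,-t))=0$ for $t\gg0$), and then locally free since $c_3(\Gg)=c_3(\Ee)=\deg\omega_C=0$ by \cite[Proposition 2.6]{Hartshorne1}. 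Being locally free, spanned, with $c_1=(0,0,2)$ and $c_2=0$, the bundle $\Gg$ is pulled back from the third factor (apply Proposition \ref{trivial} twice, or restrict to a fibre of $\pi_3$), so $\Gg\cong\Oo_X(0,0,a)\oplus\Oo_X(0,0,b)$ with $a,b\ge0$, $a+b=2$. The possibility $\{a,b\}=\{0,2\}$ is ruled out by the no-trivial-factor hypothesis: projecting $\Gg$ onto its $\Oo_X$-summand produces a surjection $\Ee\to\Oo_X$ whose kernel $\Kk$ is an extension of $\Oo_X(0,0,2)$ by $\Oo_X(2,2,0)$, hence locally free with $h^1(\Kk)=0$ (as $h^1(\Oo_X(2,2,0))=h^1(\Oo_X(0,0,2))=0$), so this surjection splits and $\Ee\cong\Oo_X\oplus\Kk$, a contradiction. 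Therefore $\Gg\cong\Oo_X(0,0,1)^{\oplus 2}$, and since $\Ext^1(\Oo_X(0,0,1)^{\oplus 2},\Oo_X(2,2,0))=H^1(\Oo_X(2,2,-1))^{\oplus 2}=0$, the sequence $0\to\Oo_X(2,2,0)\to\Ee\to\Gg\to0$ splits, giving $\Ee\cong\Oo_X(2,2,0)\oplus\Oo_X(0,0,1)^{\oplus 2}$, which is case (ii).

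The routine part is the geometry of $C$ and the whole case $r=2$. The main obstacle is the $r=3$ case: establishing that $\Gg$ is locally free (the reflexivity-plus-$c_3$ argument) and, above all, excluding $\Gg\cong\Oo_X\oplus\Oo_X(0,0,2)$, where the defining extension of $\Ee$ need not split on the nose, so one must instead split off a trivial quotient of $\Ee$ and invoke the no-trivial-factor hypothesis.
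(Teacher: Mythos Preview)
Your overall strategy matches the paper's, and the $r=2$ case and the endgame of $r=3$ are correct. There is, however, a real gap at the start: the assertion that ``the Hartshorne--Serre correspondence forces $\omega_C\cong\Oo_C$'' holds only for $r=2$. For $r\ge 3$ the correspondence merely gives that $\omega_C(c_1(X)-c_1)=\omega_C$ is \emph{globally generated}, i.e.\ $p_a(C_i)\ge 1$, not $p_a(C_i)=1$. So as written you cannot yet conclude bidegree $(2,2)$, nor invoke Remark~\ref{24nov} to bound $r$, nor later compute $c_3(\Ee)=\deg\omega_C=0$.

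The repair is short and is exactly what the paper does via Remark~\ref{v1}: since $e_3=0$, each $C_i$ sits in a fibre $\PP^1\times\PP^1\times\{o_i\}$ with bidegree $(a_i,b_i)$; positive genus forces $a_i,b_i\ge 2$, while global generation of $\Ii_C(2,2,2)$ means the fibre is not in the base locus, so $\Ii_{C'_i,\PP^1\times\PP^1}(2,2)$ has a nonzero section and hence $a_i,b_i\le 2$. Thus $(a_i,b_i)=(2,2)$, each $C_i$ is elliptic, and \emph{then} $\omega_C\cong\Oo_C$ holds and your argument proceeds unchanged.

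For $r=3$ your route to local freeness of $\Gg$ (torsion-free, then reflexive via $h^1(\Gg(-t))=0$ for $t\gg 0$, then $c_3(\Gg)=0$) is a clean alternative to the paper's approach, which instead quotients $\Ee$ by a general section to reduce to the $r=2$ computation. Your explicit exclusion of $\Gg\cong\Oo_X\oplus\Oo_X(0,0,2)$ by producing a split surjection $\Ee\to\Oo_X$ is also a useful addition: the paper simply asserts that the cokernel has no trivial factor, and your argument is precisely the justification.
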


\begin{proof}
The ``~if~'' part is obvious, because $(1+t_3)^2 = 1+2t_3$ and $(1+2t_1+2t_2)(1+2t_3) = 1+2t_1+2t_2+2t_3+4t_1t_3+4t_2t_3$. Now assume $(e_1,e_2,e_3) =(4,4,0)$. Since $e[i]_3=0$ for all $i$, Remark \ref{v1} gives $s=2$ and $e[i]_j=2$ for all $i, j \in \{1,2\}$. Hence each connected component of $C$ is an elliptic curve and so $\omega _C\cong \Oo _C$ and $c_3(\Ee )=0$. Since $s=2$, Remark \ref{24nov} gives $r\in \{2,3\}$.

First assume $r=2$. Note that $2=h^0(\Oo _X(0,0,2)) -1=h^0(\Oo_C)=h^0(\Oo _C(0,0,2))$ and so we have $h^0(\Ii _C(0,0,2)) >0$. Since $s\ge 2$ and $C_i\ne \PP^1$, we have $h^0(\Ii _C(0,0,1)) =0$. Since $h^0(\Ii _C(-1,0,2)) = h^0(\Ii _C(0,-1,2)) =0$ and $c_2(\Ee )=4t_2t_3+4t_1t_3$,
we get that $\Ee$ fits in an exact sequence
$$0\to \Oo _X(2,2,0)\to \Ee \to \Ii _T(0,0,2)\to 0$$
with either $T=\emptyset$ or $T$ a locally complete intersection curve with multidegree $(0,0,0)$. We get $T=\emptyset$ and hence $\Ee$ fits in the sequence in (i).

Now assume $r=3$. As in the case $r=2$ we get a non-zero map $h: \Oo _X(2,2,0)\to \Ee$ with torsion-free cokernel. Let $\Gg$ be the quotient of $\Ee$ by a general map $\Oo _X\to \Ee$
with $u: \Ee \to \Gg$ the quotient map. By the case of $r=2$, the map $u\circ h : \Oo _X(2,2,0) \to \Gg$ has locally free cokernel and so $\Ff := \mathrm{coker}(h)$ is locally free. $\Ff$ is a spanned bundle of rank $2$ with no trivial factor and $c_1(\Ff )=(0,0,2)$. Thus we have $\Ff \cong \Oo _X(0,0,1)\oplus \Oo _X(0,0,1)$. Since $h^1(\Oo _X(2,2,-1)) =0$, we get $\Ee \cong \Oo _X(2,2,0)\oplus \Oo _X(0,0,1)^{\oplus 2}$.
\end{proof}

\begin{proposition}\label{v3}
Let $\Ee$ be a globally generated vector bundle of rank $2$ on $X$ with $c_1=(2,2,2)$ and $c_2=(2,2,4)$.
\begin{itemize}
\item[(i)] Its associated curve $C$ is connected if and only if $\Ee$ is a spanned flat limit of the family of Ulrich bundles in \cite[Theorem 6.7]{CFM0}.
\item[(ii)] Non-Ulrich bundles $\Ee$ exist and they are all as in Example \ref{v4}.
\end{itemize}
\end{proposition}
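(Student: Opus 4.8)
The plan is to read everything off the zero locus $C\subset X$ of a general section of $\Ee$: by (\ref{equ+}) it is a smooth curve of multidegree $(2,2,4)$ with $\omega_C\cong\Oo_C$, so each connected component $C_i$ is an elliptic curve and $e[i]_j\neq1$ for all $i,j$; conversely $\Ee$ is recovered from $C$, since for $s=1$ the extension class in $\Ext^1(\Ii_C(2,2,2),\Oo_X)\cong H^0(\omega_C)^\vee$ is unique up to scalar. First I would show $C$ is connected, which already gives the ``only if'' in (i): a spanned rank-$2$ bundle with $c_2\ne0$ has a smooth $1$-dimensional zero locus, flat limits preserve the Chern classes, and by the first part such a curve is connected. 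If $C$ were disconnected and some $e[i]_j=0$, then Claim~1 of Remark~\ref{v1} would force $e_j=0$, impossible for $(e_1,e_2,e_3)=(2,2,4)$; hence every $e[i]_j\ge2$, and $2=e_1=\sum_ie[i]_1\ge2s$ gives $s=1$.

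Next I would isolate the one cohomology group that decides Ulrich-ness. Twisting (\ref{equ+}) gives $h^0(\Ee(-1,-1,-1))=h^0(\Ii_C(1,1,1))$, and since $C$ is elliptic of degree $8$ Riemann--Roch gives $h^0(\Oo_C(1,1,1))=8=h^0(\Oo_X(1,1,1))$, so $h^0(\Ii_C(1,1,1))=h^1(\Ii_C(1,1,1))=7-\dim\langle C\rangle$. As $\deg C=8$ exceeds the degree $6$ of the complete intersection of $X$ with any codimension-$2$ linear subspace of $\PP^7$ (Remark~\ref{rem3.3.3} with $(1,1,1)$), $C$ lies in no such subspace, so $\dim\langle C\rangle\in\{6,7\}$ and $h^0(\Ee(-1,-1,-1))\in\{0,1\}$. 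If $h^0(\Ee(-1,-1,-1))=0$, a routine K\"unneth/Riemann--Roch computation on the twists of (\ref{equ+}) gives $h^\bullet(\Ee(-t,-t,-t))=0$ for $t=1,2,3$, i.e.\ $\Ee$ is Ulrich for $(X,\Oo_X(1,1,1))$, hence one of the bundles of \cite[Theorem~6.7]{CFM0}; in particular it is a (trivial) spanned flat limit of that family with connected associated curve, settling (i) here.

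If instead $h^0(\Ee(-1,-1,-1))=1$, the unique (up to scalar) nonzero map $\Oo_X(1,1,1)\to\Ee$ saturates to a sub-line-bundle $\Oo_X(a_1,a_2,a_3)$, $a_i\ge1$, with torsion-free quotient $\Ii_T(b_1,b_2,b_3)$, where $a_i+b_i=2$ and $b_i\ge0$ (the quotient being spanned). If some $a_i=2$, then $\Hom(\Oo_X(1,1,1),\Oo_X(a_1,a_2,a_3))$ is $2$-dimensional, contradicting $h^0(\Ee(-1,-1,-1))=1$; hence $(a_i)=(b_i)=(1,1,1)$, and $T$ has multidegree $c_2(\Ee(-1,-1,-1))=(0,0,2)$, so $T=Z\times\PP^1$ with $Z\subset\PP^1\times\PP^1$ of degree $2$ and $\Ii_Z(1,1)$ globally generated (because $\Ii_T(1,1,1)$ is), i.e.\ $Z$ a complete intersection of two $(1,1)$-curves. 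Thus $\Ee$ is exactly a bundle of Example~\ref{v4}; conversely those bundles have $h^1(\Ee(-1,-1,-1))=h^1(\Ii_T)\ge1$ since $h^0(\Oo_T)=2>1$, so they are not Ulrich. Together with the existence statement in Example~\ref{v4} this proves (ii), and shows that the non-Ulrich $\Ee$ are precisely the Example~\ref{v4} bundles.

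It remains, for (i), to see that each Example~\ref{v4} bundle is a spanned flat limit of the Ulrich family; I would do this by deforming the associated curve. Any smooth elliptic $(2,2,4)$-curve $C$ has $h^1(N_{C/X})=0$, because $T_X|_C=\Oo_C(2,0,0)\oplus\Oo_C(0,2,0)\oplus\Oo_C(0,0,2)$ has all summands of positive degree on the elliptic $C$, so $h^1(T_X|_C)=0$ and $h^1(N_{C/X})$ is a quotient of it via $0\to T_C\to T_X|_C\to N_{C/X}\to0$. Hence such curves form a smooth component $\HH$ of $\mathrm{Hilb}(X)$ of dimension $\deg N_{C/X}=16$; the locus of $C\in\HH$ spanning only a $\PP^6$ — the curves underlying Example~\ref{v4} bundles, which on a hyperplane section $D=X\cap H$ move in a family of dimension $8$ — sits in a $\le15$-dimensional subvariety, hence is proper in $\HH$. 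Therefore a general $C\in\HH$ spans $\PP^7$ and $\Ee_C$ is Ulrich by the previous step, while the curve $C_0$ attached to a given Example~\ref{v4} bundle is a flat limit of such $C$; the relative Hartshorne--Serre construction then presents that Example~\ref{v4} bundle as a flat limit of Ulrich bundles. I expect the technical heart to be making the relative Hartshorne--Serre family flat and identifying ``rank-$2$ Ulrich bundle with $c_1=(2,2,2)$, $c_2=(2,2,4)$'' precisely with the family of \cite[Theorem~6.7]{CFM0}; alternatively one degenerates the extension presentation of that family until the bundle acquires an $\Oo_X(1,1,1)$-subsheaf, landing on Example~\ref{v4}.
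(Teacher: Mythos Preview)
Your outline matches the paper's: establish connectedness of $C$, split by whether $C\subset\PP^7$ is linearly normal, show the bundle is Ulrich (hence in the family of \cite[Theorem~6.7]{CFM0}) in the first case and lands in Example~\ref{v4} in the second, then deform to get the flat-limit statement. Two tactical differences are worth noting. To pin down the saturated subbundle in the non-Ulrich case, the paper simply checks $h^0(\Ii_C(0,1,1))=h^0(\Ii_C(1,0,1))=h^0(\Ii_C(1,1,0))=0$ (using $s=1$, $e_i>0$, and the genus constraint to exclude $C$ from any $|\Oo_X(1,1,0)|$-type divisor), so the map $\Oo_X(1,1,1)\to\Ee$ already has torsion-free cokernel; this sidesteps your bound $h^0(\Ee(-1,-1,-1))\le 1$, whose justification via $\dim\langle C\rangle\ge 6$ needs one extra sentence to rule out $2$-dimensional components of $X\cap\PP^5$ (the argument in Lemma~\ref{bo2} does exactly this). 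For the flat-limit direction the paper observes that any multidegree-$(2,2,4)$ embedding of an elliptic curve is given by a triple of base-point-free line bundles of degrees $2,2,4$ and deforms the triple to a generic one giving a linearly normal $C$, rather than your Hilbert-scheme dimension count; your smoothness argument via $h^1(N_{C/X})=0$ is cleaner, but the ``bad locus has dimension $\le 15$'' step is sketched and would need the same kind of parameter count the paper leaves implicit.
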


\begin{proof}
Since the ``~if~'' part is obvious by the description in \cite[Theorem 6.7]{CFM0}, we only need to prove the ``~only if~'' part. Since $C$ is connected, so $C \subset \PP^7$ is a smooth elliptic curve of degree $8$.

First assume that $C$ is linearly normal. The homogeneous ideal of $C$ in $\PP^7$ is generated by quadrics and so $\Ii _{C,\PP^7}(2)$ is globally generated. Thus $\Ii _C(2,2,2)$ is also globally generated and any such a curve gives a globally generated bundle. Since $h^1(\Ii _{C,\PP^7}(t)) =0$ for all $t$, we also get $h^1(\Ii _C(t,t,t)) =0$ for all $t\in \ZZ$. Since $c_1(\Ee )=(2,2,2)$, we have $\Ee ^\vee \cong \Ee (-2,-2,-2)$ and so the Serre duality gives $h^2(\Ee (t,t,t)) =0$ for all $t\in \ZZ$. Thus $\Ee$ is ACM in this case and we have the description of such a bundle in \cite[Theorem 6.7]{CFM0}.

Now assume that $C\subset \PP^7$ is not linearly normal. Since $C$ has multidegree $(2,2,4)$ and any such an embedding $C\subset \PP^1\times \PP^1\times \PP^1$ is induced by three base point free line bundles on $C$, two of degree $2$ and the other one of degree $4$, so we get that $C$ is a flat limit of a family of linearly normal elliptic curves $\{C_\lambda\}_{\lambda \in \Lambda}$ with $C_\lambda \subset X$. The Hartshorne-Serre correspondence gives that $\Ee$ is the limit of a family of $\Ee _\lambda$ with each $\Ee_\lambda$ Ulrich and in the family \cite[Theorem 6.7]{CFM0}. Since $C$ is not linearly normal, we have $h^0(\Ii _C(1,1,1)) >0$. Since $s=1$ and $e_i>0$ for all $i$, we also have $h^0(\Ii _C(0,1,1)) =h^0(\Ii _C(1,0,1)) = h^0(\Ii _C(1,1,0)) =0$. Thus $\Ee$ fits into an exact sequence (\ref{eqv2}) with either $T=\emptyset$ or a locally complete intersection curve. Since $(t_1+t_2+t_3)^2  = 2t_1t_2+2t_1t_3+2t_2t_3$, so $T$ has multidegree $(0,0,2)$, i.e. there is a $0$-dimensional subscheme $Z\subset \PP^1\times \PP^1$ of degree $2$ such that $T =Z\times \PP^1$. We get that $\Ee$ is as in  Example \ref{v4}.
\end{proof}

\begin{remark}
An example of rank two globally generated vector bundle with $c_1=(2,2,2)$ and $c_2=(2,3,3)$ is given as the first type of Ulrich bundles in \cite[Theorem 6.7]{CFM0}.
\end{remark}

\begin{proposition}\label{++}
Let $\Ee$ be a globally generated vector bundle of rank $r\ge 2$ on $X$ with $c_1=(2,2,2)$. Then we have $c_2=(0,2,2)$ if and only if $\Ee \cong \Oo _X(0,0,1)\oplus \Oo _X(2,2,1)$.
\end{proposition}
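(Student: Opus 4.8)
The plan is to follow the template of Proposition~\ref{v2}: read the associated curve off the Hartshorne--Serre sequence, pin down its shape with Remark~\ref{v1}, force $r = 2$ with Remark~\ref{24nov}, split off a line subbundle, and check that the resulting extension splits. Here ``the bundle in the statement'' is understood up to a permutation of the three factors of $X$; the normalization $c_2 = (0,2,2)$ (i.e. $e_1 = 0$) corresponds to the representative $\Oo_X(1,0,0) \oplus \Oo_X(1,2,2)$, which becomes $\Oo_X(0,0,1) \oplus \Oo_X(2,2,1)$ after swapping the first and third factors. The ``if'' direction is then immediate: $\Oo_X(1,0,0) \oplus \Oo_X(1,2,2)$ has $c_1 = (2,2,2)$ and $c_2 = t_1(t_1 + 2t_2 + 2t_3) = 2t_1 t_2 + 2 t_1 t_3$, that is $(e_1, e_2, e_3) = (0,2,2)$, and it is globally generated.

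For the converse, assume as usual that $\Ee$ has no trivial summand. By (\ref{equ+}) there is an exact sequence $0 \to \Oo_X^{\oplus(r-1)} \to \Ee \to \Ii_C(2,2,2) \to 0$ with $C \subset X$ a smooth curve and $[C] = c_2(\Ee) = (0,2,2)$, so $C \neq \emptyset$ and $e[i]_1 = 0$ for every connected component $C_i$. The Hartshorne--Serre correspondence makes $\omega_C$ globally generated ($\omega_C \cong \Oo_C$ when $r = 2$), so each $C_i$ has positive arithmetic genus and Remark~\ref{v1} applies: $C_i = \{p_i\} \times C'_i$ with $C'_i \in |\Oo_{\PP^1 \times \PP^1}(e[i]_3, e[i]_2)|$ and $e[i]_2 = e[i]_3 = 2$. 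Since $\sum_i e[i]_2 = e_2 = 2$, there is only one component, so $C$ is a smooth connected elliptic curve $\{p\} \times C'$ with $C' \in |\Oo_{\PP^1 \times \PP^1}(2,2)|$ and $\omega_C \cong \Oo_C = \Oo_C(c_1 - c_1(X))$. Remark~\ref{24nov} then gives $2 \le r \le s + 1 = 2$, hence $r = 2$ and $0 \to \Oo_X \to \Ee \to \Ii_C(2,2,2) \to 0$.

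Since $\pi_1(C)$ is a single point, $\Oo_C(1,0,0) \cong \Oo_C$, so $h^0(\Ii_C(1,0,0)) = h^0(\Oo_X(1,0,0)) - 1 = 1$; twisting the last sequence by $\Oo_X(-1,-2,-2)$, whose cohomology vanishes by K\"unneth, yields a nonzero map $g \colon \Oo_X(1,2,2) \to \Ee$. Let $L = \Oo_X(a,b,c)$ be the saturation of $\mathrm{Im}(g)$ in $\Ee$ (a line bundle, as $X$ is smooth): then $(1,2,2) \le (a,b,c)$, and $(a,b,c) \le (2,2,2)$ because the rank-one torsion-free sheaf $\Ee/L$ is globally generated and hence has $c_1 \ge 0$; so $(a,b,c)$ is $(1,2,2)$ or $(2,2,2)$. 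The value $(2,2,2)$ is excluded, since it would give a nonzero section of $\Ee^\vee = \Ee(-2,-2,-2)$, whereas twisting gives $h^0(\Ee(-2,-2,-2)) = h^0(\Ii_C) = 0$. Hence $L = \Oo_X(1,2,2)$ and $\Ee/\Oo_X(1,2,2) = \Ii_T(1,0,0)$ with $T$ of codimension $\ge 2$; as $\Ee$ is locally free, $T$ has pure codimension $2$ (or is empty), and then comparing $c_2$ forces $[T] = c_2(\Ee) - (t_1 + 2t_2 + 2t_3)t_1 = 0$, so $T = \emptyset$. Thus $0 \to \Oo_X(1,2,2) \to \Ee \to \Oo_X(1,0,0) \to 0$, and $\Ext^1(\Oo_X(1,0,0), \Oo_X(1,2,2)) = H^1(\Oo_X(0,2,2)) = 0$ by K\"unneth, so the extension splits: $\Ee \cong \Oo_X(1,0,0) \oplus \Oo_X(1,2,2)$, as required.

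The only delicate point is the determination of the shape of $C$: one must combine $e_1 = 0$, the positivity of the genus of each $C_i$ (needed to invoke Remark~\ref{v1}, in particular when $r \ge 3$), and the identity $\sum_i e[i]_2 = 2$ to conclude that $C$ is a single elliptic curve lying in a fibre of $\pi_1$ --- and this is exactly what collapses the rank to $2$. Once that is in place, the remaining ``split off a line subbundle, identify the quotient from its Chern classes, and kill the extension class'' argument is entirely routine and parallels Lemma~\ref{u1} and Proposition~\ref{v2}.
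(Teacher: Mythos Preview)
Your proof is correct and follows essentially the same route as the paper's: identify $C$ via Remark~\ref{v1} as a single elliptic curve in a fibre of a projection, deduce $r=2$ from Remark~\ref{24nov}, produce a line subbundle from $h^0(\Ii_C(1,0,0))>0$, compute that the quotient has $T=\emptyset$, and split the extension by K\"unneth. You also correctly flag the permutation mismatch between the stated $c_2=(0,2,2)$ and the displayed bundle $\Oo_X(0,0,1)\oplus\Oo_X(2,2,1)$ (whose $c_2$ is $(2,2,0)$); the paper's own proof in fact works with the convention $e_3=0$ despite the statement, so your normalization is the cleaner one. The only stylistic difference is your saturation step: you bound the saturation $L$ between $\Oo_X(1,2,2)$ and $\Oo_X(2,2,2)$ using global generation of $\Ee/L$ and then exclude $(2,2,2)$ via $h^0(\Ee(-2,-2,-2))=h^0(\Ii_C)=0$, whereas the paper checks the three ``one-step-up'' vanishings $h^0(\Ii_C(0,0,0))=h^0(\Ii_C(-1,0,1))=h^0(\Ii_C(0,-1,1))=0$ directly; both are equivalent and equally short.
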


\begin{proof}
Since $t_3(2t_1+2t_2+t_3) = 2t_1t_3+2t_2t_3$, the ``~if~'' part is obvious. Take a globally generated bundle $\Ee$ with multidegree $(0,2,2)$ and let $C$ be any smooth curve which is a zero-locus of a general section of $\Ee$. By Remark \ref{v1} we have $s=1$, $C$ is an elliptic curve and there are $p\in \PP^1$ and $C'\in |\Oo _{\PP^1\times \PP^1}(2,2)|$ such that $C = \{p\}\times C'$. Since $\omega _C \cong \Oo _C$ and $s=1$, so we have $r=2$ (see Remark \ref{24nov}). Since $h^0(\Oo _C(0,0,1)) =h^0(\Oo _C)=1 < 2 = h^0(\Oo _X(2))$, we get
$H^0(\Ee (-2,-2,-1)) \ne 0$. Since $h^0(\Ii _C(0,0,0)) = h^0(\Ii _C(-1,0,1)) =h^0(\Ii _C(0,-1,1)) =0$, we get that $\Ee$ is an extension of $\Oo _X(0,0,1)$
by $\Oo _X(2,2,1)$. Since $h^1(\Oo _X(2,2,0)) =0$, so the extension is trivial.
\end{proof}

\begin{remark}\label{vo1}
Since $h^1(\Oo _X(0,0,-2)) =1$, so up to isomorphism there is a unique non-trivial extension $\Ff$ of $\Oo _X(1,1,2)$ by $\Oo _X(1,1,0)$ and we have $\Ff \cong \Oo _X(1,1,1)\oplus \Oo _X(1,1,1)$.
\end{remark}

\begin{remark}\label{bo1}
Let $W\subset \PP^7$ be a $3$-dimensional linear subspace such that $W\cap X =\emptyset$. Let $\ell _W: \PP^7\setminus W\to \PP^3$ be the linear projection from $W$ and set $\ell := {\ell _W}_{\vert_X}$. For a null-correlation bundle $\Nn_{\PP^3}(1)$ of $\PP^3$ twisted by $1$, set $\Ff := \ell ^\ast (\Nn_{\PP^3}(1))$. A general zero-locus $T$ of a section of $\Nn_{\PP^3}(1)$ is a disjoint union of two lines. Thus $\Ff$ is a globally generated bundle on $X$ with $c_1=(2,2,2)$, multidegree $(4,4,4)$ and $s=2$. Let $C$ be a general zero-locus of a section of $\Ff$. We have $h^0(\Oo _C(2,2,2)) = 24 =h^0(\Oo _X(2,2,2)) -3$. Since $h^0(\PP^3,\Ii _T(2)) =4$, we get $h^0(\Ii _C(2)) \ge 4$ and so $h^1(\Ff )>0$.
\end{remark}

\begin{lemma}\label{bo2}
Let $T\subset X$ be an elliptic curve of $\deg (T) =6$. Then the linear span $\langle T\rangle \subset \PP^7$ has dimension $5$ and $T = X\cap \langle T\rangle$ as schemes.
\end{lemma}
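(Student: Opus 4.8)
The plan is to determine the multidegree of $T$, deduce $\dim\langle T\rangle\le 5$, prove linear normality to get equality, and then upgrade to the scheme-theoretic identity.

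First I would pin down the multidegree $(e_1,e_2,e_3)$ of $T$, with $e_1+e_2+e_3=\deg(T)=6$. If some $e_i=0$, then $\pi_i$ is constant on $T$, so $T$ lies in a fibre of $\pi_i$, a smooth quadric $\cong\PP^1\times\PP^1$; a smooth genus one curve there has bidegree $(a,b)$ with $(a-1)(b-1)=1$, hence $a=b=2$ and $\deg(T)=4$, absurd. If some $e_i=1$, then $\pi_i|_T\colon T\to\PP^1$ would be a degree one morphism of smooth projective curves, hence an isomorphism, contradicting $p_a(T)=1$. So $e_i\ge 2$ for all $i$, forcing $(e_1,e_2,e_3)=(2,2,2)$. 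Since $\deg\Oo_T(1,1,1)=6>0=\deg\omega_T$, Riemann--Roch gives $h^1(\Oo_T(1,1,1))=0$ and $h^0(\Oo_T(1,1,1))=6$, and as $T$ spans $\langle T\rangle$ the restriction $H^0(\langle T\rangle,\Oo(1))\hookrightarrow H^0(\Oo_T(1,1,1))$ is injective, so $\dim\langle T\rangle\le 5$.

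Next I would show $\dim\langle T\rangle=5$. Because $X$ is projectively normal and spans $\PP^7$ we have $\dim\langle T\rangle=7-h^0(\Ii_T(1,1,1))$, and from $0\to\Ii_T(1,1,1)\to\Oo_X(1,1,1)\to\Oo_T(1,1,1)\to 0$ with $h^1(\Oo_X(1,1,1))=0$ the assertion is equivalent to $h^0(\Ii_T(1,1,1))=2$, i.e.\ to surjectivity of the restriction $\rho\colon H^0(\Oo_X(1,1,1))\to H^0(\Oo_T(1,1,1))$. Assume first $h^0(\Ii_T(1,1,0))=h^0(\Ii_T(1,0,1))=h^0(\Ii_T(0,1,1))=0$. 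Then $H^0(\Oo_X(1,1,0))\to H^0(\Oo_T(1,1,0))$ is an injection of $4$-dimensional spaces, hence an isomorphism, and likewise $H^0(\Oo_X(0,0,1))\to H^0(\Oo_T(0,0,1))$ is an isomorphism of $2$-dimensional spaces; under $H^0(\Oo_X(1,1,1))\cong H^0(\Oo_X(1,1,0))\otimes H^0(\Oo_X(0,0,1))$ the map $\rho$ becomes the multiplication map $H^0(L)\otimes H^0(N)\to H^0(L\otimes N)$, where $L:=\Oo_T(1,1,0)$ has degree $4$ and $|N|$, with $N:=\Oo_T(0,0,1)$, is a base-point-free pencil. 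By the base-point-free pencil trick its rank is $2h^0(L)-h^0(L\otimes N^{\vee})=8-h^0(\Oo_T(1,1,-1))=8-2=6=h^0(L\otimes N)$, so $\rho$ is surjective. If instead $T\subset S:=\pi_{12}^{-1}(E)$ for some $E\in|\Oo_{\PP^1\times\PP^1}(1,1)|$ (necessarily smooth, else $T$ lies in a fibre of $\pi_1$ or $\pi_2$), then $S\cong\PP^1\times\PP^1$, $\Oo_X(1,1,1)|_S\cong\Oo_S(2,1)$, and $T$ has bidegree $(2,2)$ on $S$, so $\Oo_S(-T)\cong\Oo_S(-2,-2)$; the sequence $0\to\Oo_X(0,0,1)\to\Ii_T(1,1,1)\to\Oo_S(0,-1)\to 0$ with $h^1(\Oo_X(0,0,1))=h^0(\Oo_S(0,-1))=h^1(\Oo_S(0,-1))=0$ again yields $h^0(\Ii_T(1,1,1))=2$. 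Either way, $\dim\langle T\rangle=5$.

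Finally I would deduce $T=X\cap\langle T\rangle$ as schemes. A basis $f_1,f_2$ of the $2$-dimensional $H^0(\Ii_T(1,1,1))$ cuts out $\langle T\rangle=\{f_1=f_2=0\}$, so $Y:=X\cap\langle T\rangle$ is the zero scheme of $(f_1,f_2)$ on $X$ and $T\subseteq Y$. If $Y$ is purely $1$-dimensional then it is a complete intersection in $X$, hence Cohen--Macaulay without embedded points, of degree $(t_1+t_2+t_3)^3=6$, with $\omega_Y\cong\Oo_Y$ by adjunction and thus $p_a(Y)=1$; since $T$ is a reduced irreducible curve of degree $6$ inside $Y$, the multiplicity of $Y$ along $T$ is $1$ and $Y$ has no other component, so $Y$ is reduced with $Y_{\mathrm{red}}=T$, i.e.\ $Y=T$. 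The step I expect to be the main obstacle is exactly showing that $Y$ is purely $1$-dimensional, equivalently that $(f_1,f_2)$ is a regular sequence on $X$: a $2$-dimensional component of $Y$ would be a surface $S'\subseteq X$ contained in $\langle T\rangle$, hence with $h^0(\Ii_{S',X}(1,1,1))\ge 2$; for $S'\in|\Oo_X(b_1,b_2,b_3)|$ this means $(2-b_1)(2-b_2)(2-b_3)\ge 2$, so $S'$ has type $\Oo_X(1,0,0)$ or $\Oo_X(1,1,0)$ up to permutation, and ruling these out uses the geometric input — available in the contexts where the lemma is applied — that $T$ is not contained in such a surface; granting that, $Y=T$.
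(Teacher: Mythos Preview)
Your argument for the multidegree $(2,2,2)$ and for $\dim\langle T\rangle=5$ is correct. For the equality you take a different route from the paper: you compute $h^0(\Ii_T(1,1,1))=2$ directly (via the base-point-free pencil trick in the generic case, and an explicit exact sequence when $T$ lies on a $(1,1,0)$-surface), whereas the paper deduces $\dim\langle T\rangle\ge 5$ from a refined B\'ezout inequality of Flenner--O'Carroll--Vogel. Your approach is more elementary and self-contained.

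You are right that the final step has a genuine obstruction, and in fact the second assertion of the lemma is \emph{false as stated}. Take a smooth $E\in|\Oo_{\PP^1\times\PP^1}(1,1)|$, set $U:=\pi_{12}^{-1}(E)\in|\Oo_X(1,1,0)|$ (so $U\cong\PP^1\times\PP^1$ with $\Oo_X(1,1,1)|_U\cong\Oo_U(2,1)$), and let $T\subset U$ be a smooth curve of bidegree $(2,2)$ on $U$. Then $T$ is elliptic of multidegree $(2,2,2)$ in $X$, hence $\deg T=6$. Since $\dim\langle U\rangle=5$ and $T\subset U$, one has $\langle T\rangle=\langle U\rangle$; the two hyperplanes cutting out $\langle U\rangle$ restrict on $X$ to $hg_1,hg_2$ with $h$ an equation of $U$ and $g_1,g_2$ a basis of $H^0(\Oo_X(0,0,1))$, and since $g_1,g_2$ have no common zero this gives $X\cap\langle T\rangle=X\cap\langle U\rangle=U\neq T$. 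The paper's proof overlooks this case: it asserts that any irreducible surface in $X\cap\langle T\rangle$ containing $T$ must have $a_i>0$ for all $i$, which fails for $(a_1,a_2,a_3)=(1,1,0)$ above.

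Two refinements of your last paragraph sharpen the picture. First, a surface $S'\in|\Oo_X(1,0,0)|$ can be excluded from $X\cap\langle T\rangle$ without any extra hypothesis: necessarily $T\not\subset S'$ (else $e_1=0$), so writing $f_i=x\cdot g_i$ one gets two independent $g_i\in H^0(\Ii_T(0,1,1))$, which would force $T$ into the intersection of two $(0,1,1)$-divisors and hence into a surface of type $(0,1,0)$ or $(0,0,1)$, impossible. Second, the same factoring shows that $S'\in|\Oo_X(1,1,0)|$ can lie in $X\cap\langle T\rangle$ only if $T\subset S'$ (otherwise $h^0(\Ii_T(0,0,1))\ge 2$, whereas this space vanishes since $\pi_3|_T$ is surjective). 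Thus the correct statement, which your outline together with these observations proves, is: $T=X\cap\langle T\rangle$ holds if and only if $T$ is not contained in any surface of type $(1,1,0)$, $(1,0,1)$, or $(0,1,1)$.
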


\begin{proof}
We have $\dim (\langle T\rangle )\le 5$ and equality holds if and only if $C$ is linearly normal in $\langle T\rangle$. Since $\PP^1\times \PP^1$ contains no elliptic curve of degree $6$, $T$ has multidegree $(2,2,2)$. Assume for the moment the existence of an irreducible surface $U\subseteq X\cap \langle T\rangle$ with $C\subset U$ and say $U\in |\Oo _X(a_1,a_2,a_3)|$. Since no component of the multidegree of $T$ has degree $0$, we have $a_i>0$ for all $i$. Thus we have $\dim (\langle T\rangle)\ge  \dim (\langle U\rangle ) \ge 6$, a contradiction.

Therefore each irreducible component of $\langle T\rangle \cap X$ has dimension at most $1$. Assume for the moment $\dim (\langle T\rangle )\le 4$. Fix a general $p\in X\setminus T$
and let $V\subset \PP^7$ be a general $5$-dimensional linear space containing $T \cup \{p\}$. Since $\deg ( T) =\deg (X)$, \cite[Theorem 2.2.5]{fov} gives a contradiction. Now assume $\dim (\langle T\rangle )=5$. Since each irreducible component of $\langle T\rangle \cap X$ has dimension at most $1$ and $\deg (T)=\deg (X)$, we get $T = X\cap \langle T\rangle$ as schemes.
 \end{proof}

 \begin{proposition}\label{bo4}
 Let $\Ee$ be a globally generated vector bundle of rank $r\ge 2$ on $X$ with $c_1=(2,2,2)$, $c_2=(2,2,2)$ and no trivial factor. Then we have $r=2$ and $\Ee \cong \Oo _X(1,1,1)^{\oplus 2}$.
 \end{proposition}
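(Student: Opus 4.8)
The plan is to pin down the rank first, then the curve, and finally reconstruct $\Ee$ from a section of $\Ee(-1,-1,-1)$. I would start from the Hartshorne--Serre sequence (\ref{equ+}): the dependency curve $C$ has multidegree $(e_1,e_2,e_3)=(2,2,2)$, so every $e_j\ne 0$. By Remark \ref{v1} no $e[i]_j$ can vanish (that would force $e_j=0$) and none can equal $1$ (the $C_i$ are not rational), so every $e[i]_j\ge 2$; as $\sum_{i=1}^{s}e[i]_j=e_j=2$, this forces $s=1$ and $e[1]_j=2$ for each $j$. Hence $C$ is smooth, connected, of degree $6$, and since $c_1(\Ee)=(2,2,2)=c_1(X)$ the Hartshorne--Serre correspondence gives $\omega_C\cong\Oo_C$; thus $C$ is an elliptic curve with $h^0(\omega_C)=1=s$, and Remark \ref{24nov} yields $2\le r\le s+1=2$, i.e. $r=2$ and $0\to\Oo_X\to\Ee\to\Ii_C(2,2,2)\to 0$.

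Next I would invoke Lemma \ref{bo2} for $C$: the span $\langle C\rangle\subset\PP^7$ is a $\PP^5$ and $C=X\cap\langle C\rangle$ scheme-theoretically. The two linear forms cutting out $\langle C\rangle$ generate $\Ii_C(1,1,1)$, so it is globally generated; and since $C$ is linearly normal with $h^0(\Oo_C(1,1,1))=6$ (Riemann--Roch on the elliptic sextic) we get $h^0(\Ii_C(1,1,1))=8-6=2$. Twisting (\ref{equ+}) by $(-1,-1,-1)$ and using $h^0(\Oo_X(-1,-1,-1))=h^1(\Oo_X(-1,-1,-1))=0$ gives $h^0(\Ee(-1,-1,-1))=h^0(\Ii_C(1,1,1))=2$. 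Fix a nonzero $f\colon\Oo_X(1,1,1)\to\Ee$; it is injective, and since $X$ is locally factorial the saturation $G$ of $\op{Im}(f)$ in $\Ee$ is a line bundle $\Oo_X(b_1,b_2,b_3)$ with all $b_i\ge 1$. The torsion-free rank-one quotient $\Ee/G$ is globally generated, so $c_1(\Ee/G)=(2-b_1,2-b_2,2-b_3)$ has non-negative entries and $b_i\le 2$; moreover $G(-1,-1,-1)\hookrightarrow\Ee(-1,-1,-1)$ forces $b_1b_2b_3=h^0(\Oo_X(b_1-1,b_2-1,b_3-1))\le 2$. So $(b_1,b_2,b_3)$ is $(1,1,1)$ or a permutation of $(2,1,1)$.

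If $(b_1,b_2,b_3)=(1,1,1)$, the quotient is $\Ii_Z(1,1,1)$ with $[Z]=c_2(\Ee)-(t_1+t_2+t_3)^2=0$, so $Z=\emptyset$; then $\Ee$ is an extension of $\Oo_X(1,1,1)$ by $\Oo_X(1,1,1)$, and $\op{Ext}^1(\Oo_X(1,1,1),\Oo_X(1,1,1))=H^1(\Oo_X)=0$ gives $\Ee\cong\Oo_X(1,1,1)^{\oplus 2}$. The remaining task is to exclude $(b_1,b_2,b_3)$ a permutation of $(2,1,1)$; say $b=(2,1,1)$. The same Chern-class bookkeeping forces $0\to\Oo_X(2,1,1)\to\Ee\to\Oo_X(0,1,1)\to 0$, and $H^1(\Oo_X(2,0,0))=0$ splits it, so $\Ee\cong\Oo_X(2,1,1)\oplus\Oo_X(0,1,1)$ and $C$ lies on a divisor $D\in|\Oo_X(0,1,1)|$. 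From $0\to\Oo_X(1,0,0)\to\Oo_X(1,1,1)\to\Oo_D(1,1,1)\to 0$, with $h^1(\Oo_X(1,0,0))=0$, we get $h^0(\Oo_D(1,1,1))=6$, so $\langle C\rangle\subseteq\langle D\rangle$ with both equal to $\PP^5$; hence $D\subseteq X\cap\langle C\rangle=C$ by Lemma \ref{bo2}, contradicting $\dim D=2>1=\dim C$. The permutations $(1,2,1),(1,1,2)$ are handled identically.

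The step I expect to be the main obstacle is exactly this last one: the numerical analysis above does not by itself distinguish $\Oo_X(1,1,1)^{\oplus 2}$ from the genuinely different split bundle $\Oo_X(2,1,1)\oplus\Oo_X(0,1,1)$, and discarding the latter rests entirely on the geometric input of Lemma \ref{bo2} --- that a multidegree-$(2,2,2)$ elliptic sextic is cut out on $X$ by its linear span and hence lies on no surface of $X$. Everything else reduces to routine Chern-class identities and K\"unneth vanishings on a product of three lines.
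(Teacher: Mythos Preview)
Your argument is cleanly organized and in fact more careful than the paper's: the paper simply asserts that $h^0(\Ii_C(0,1,1))=h^0(\Ii_C(1,0,1))=h^0(\Ii_C(1,1,0))=0$ ``since no entry of the multidegree of $C$ is zero'' and passes immediately to $\mathrm{coker}(f)\cong\Ii_T(1,1,1)$, whereas you correctly isolate the genuine alternative $G\cong\Oo_X(2,1,1)$ (up to permutation) and recognise that excluding it is the crux.

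Unfortunately the exclusion step fails, and it fails because the proposition itself is false as stated. The bundle
\[
\Ee_0:=\Oo_X(2,1,1)\oplus\Oo_X(0,1,1)
\]
is globally generated of rank $2$ with no trivial factor, $c_1(\Ee_0)=(2,2,2)$, and
\[
c_2(\Ee_0)=(2t_1+t_2+t_3)(t_2+t_3)=2t_1t_2+2t_1t_3+2t_2t_3,
\]
i.e.\ $c_2(\Ee_0)=(2,2,2)$; yet $h^0(\Ee_0(-2,-1,-1))=h^0(\Oo_X)=1\ne 0=h^0(\Oo_X(-1,0,0)^{\oplus 2})$, so $\Ee_0\not\cong\Oo_X(1,1,1)^{\oplus 2}$. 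The general zero-locus $C$ of $\Ee_0$ is a smooth connected elliptic curve: writing $D\in|\Oo_X(0,1,1)|$ as $\PP^1\times E$ with $E\in|\Oo_{\PP^1\times\PP^1}(1,1)|$ a smooth conic, one has $D\cong\PP^1\times\PP^1$, $\Oo_X(2,1,1)|_D\cong\Oo_D(2,2)$, and $C\in|\Oo_D(2,2)|$.

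This same curve is a counterexample to the second assertion of Lemma~\ref{bo2}, on which your exclusion rests. One computes $\Oo_X(1,1,1)|_D\cong\Oo_D(1,2)$, so $h^0(\Oo_D(1,1,1))=6$ and $\langle D\rangle$ is a $\PP^5$; since $C\subset D$ and $\dim\langle C\rangle=5$, we have $\langle C\rangle=\langle D\rangle$ and hence $D\subseteq X\cap\langle C\rangle$, so $X\cap\langle C\rangle\ne C$. The gap in the paper's proof of Lemma~\ref{bo2} is precisely the line ``since no component of the multidegree of $T$ has degree $0$, we have $a_i>0$ for all $i$'': the surface $D$ above has $(a_1,a_2,a_3)=(0,1,1)$ while $T=C$ has multidegree $(2,2,2)$.

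In short, your instinct that the last step is the obstacle was exactly right; what actually happens is that your analysis up to that point is a complete proof that $\Ee$ is isomorphic to one of $\Oo_X(1,1,1)^{\oplus 2}$ or a permutation of $\Oo_X(2,1,1)\oplus\Oo_X(0,1,1)$, and all four possibilities genuinely occur. The paper's one-line justification for $h^0(\Ii_C(0,1,1))=0$ glosses over the same missing case.
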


 \begin{proof}
 Since $\deg ({C}) < 8$, Remark \ref{v1} gives $s=1$ and that $C$ is a smooth elliptic curve. Since $\omega _C \cong \Oo _C$, $s=1$ and $\Ee$ has no trivial factor, we get $r=2$. Since $\deg ({C}) =6$, we have $h^0(\Ii _C(1,1,1)) \ge 2$. Since $h^1(\Oo _X(-1,-1,-1)) =0$, a non-zero section of $\Ee$ induces a non-zero map $f: \Oo _X(1,1,1)\to \Ee$. Since no entry of the multidegree of $C$ is zero, we have $h^0(\Ii _C(0,1,1)) =h^0(\Ii _C(1,0,1)) = h^0(\Ii _C(1,1,0)) =0$. Hence $f$ has torsion-free cokernel, i.e. $\mathrm{coker}(f) \cong \Ii_T(1,1,1)$ with either $T =\emptyset$ or $T$ a locally complete intersection curve. Since $c_2(\Ee )=2t_1t_2+2t_2t_3+2t_1t_3$, we get $T=\emptyset$ and so $\Ee \cong \Oo _X(1,1,1)^{\oplus 2}$.
 \end{proof}

\begin{proposition}\label{bo3}
Let $\Ee$ be a bundle of rank $2$ on $X$ with $c_1= (2,2,2)$ and the associated curve $C$ has two connected components with $\deg (C_i)=6$ for each $i$. Then $\Ee$ is globally generated if and only if it arises as in Remark \ref{bo1}, i.e. $\Ee \cong \ell ^\ast (\Nn_{\PP^3}(1))$ for some $3$-dimensional linear subspace with $W\cap X =\emptyset$ and a null-correlation bundle $\Nn_{\PP^3}(1)$ on $\PP^3$. In particular we have $h^0(\Ee )=5$ and $h^1(\Ee )=1$.
\end{proposition}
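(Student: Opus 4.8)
The ``if'' direction is Remark \ref{bo1}: if $\Ee \cong \ell^\ast(\Nn_{\PP^3}(1))$ then $\Ee$ is globally generated with $h^0(\Ee)=5$ and $h^1(\Ee)>0$. For the converse — and in order to pin down $h^1(\Ee)=1$ — I will prove the stronger statement that \emph{every} rank $2$ bundle $\Ee$ on $X$ with $c_1(\Ee)=(2,2,2)$ carrying a section whose zero locus $C$ is a disjoint union of two smooth curves of degree $6$ is of the form $\ell^\ast(\Nn_{\PP^3}(1))$ for a suitable $W$; together with the ``if'' direction this gives the ``if and only if''. First, from $(\ref{equ+})$ we get $\omega_C\cong\Oo_C$, so each $C_i$ is a smooth elliptic curve of degree $6$. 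A smooth elliptic curve contained in a factor $\PP^1\times\PP^1$ of $X$ has degree $4$, so no component of the multidegree of $C_i$ vanishes; a degree $1$ morphism $C_i\to\PP^1$ is impossible, so none equals $1$; as the three components add up to $6$, each $C_i$ has multidegree $(2,2,2)$ (this also follows from Remark \ref{v1}).

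Now I reconstruct the projection. By Lemma \ref{bo2} the linear span $\Lambda_i:=\langle C_i\rangle\subset\PP^7$ is a $\PP^5$ with $C_i=X\cap\Lambda_i$ as schemes. Since $C_1\cap C_2=\emptyset$, $X$ is disjoint from $\Lambda_1\cap\Lambda_2$; as any linear subspace $\Pi\subset\PP^7$ with $\dim\Pi\ge 4$ meets the threefold $X$, while $\dim(\Lambda_1\cap\Lambda_2)\ge 5+5-7=3$, the subspace $W:=\Lambda_1\cap\Lambda_2$ is a $\PP^3$ with $W\cap X=\emptyset$. Let $\ell=\ell_W:X\to\PP^3$ be the induced projection; it is finite and flat of degree $6$ (miracle flatness) with $\ell^\ast\Oo_{\PP^3}(1)=\Oo_X(1,1,1)$. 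The images $L_i:=\ell(\Lambda_i\setminus W)$ are lines with $L_1\cap L_2=\emptyset$ (a common point would produce a $\PP^4\subseteq\Lambda_1\cap\Lambda_2$), and since the two linear forms cutting out $\Lambda_i\supset W$ are pulled back from the two cutting out $L_i$, one has $\ell^{-1}(L_i)=X\cap\Lambda_i=C_i$ scheme-theoretically. Hence, for $T:=L_1\sqcup L_2$ (two skew lines in $\PP^3$), $\ell^{-1}(T)=C$ and therefore $\Ii_C(2,2,2)=\ell^\ast(\Ii_{T,\PP^3}(2))$.

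By the Hartshorne--Serre correspondence $\Ee$ is an extension $0\to\Oo_X\to\Ee\to\Ii_C(2,2,2)\to 0$, classified by an element of $\Ext^1_X(\Ii_C(2,2,2),\Oo_X)$, which by Serre duality on $X$ is $H^1(C,\Oo_C)^\vee\cong\CC^2$; likewise $\Ext^1_{\PP^3}(\Ii_T(2),\Oo_{\PP^3})\cong H^1(T,\Oo_T(-2))^\vee\cong\CC^2$. Using the adjunction between $\ell^\ast$ and $\ell_*$ together with $R\ell_*=\ell_*$ (as $\ell$ is affine) and the trace decomposition $\ell_*\Oo_X=\Oo_{\PP^3}\oplus\Kk$, one obtains $\Ext^1_X(\Ii_C(2,2,2),\Oo_X)=\Ext^1_{\PP^3}(\Ii_T(2),\Oo_{\PP^3})\oplus\Ext^1_{\PP^3}(\Ii_T(2),\Kk)$, under which the pullback is the inclusion of the first summand; comparing dimensions, the pullback map $\Ext^1_{\PP^3}(\Ii_T(2),\Oo_{\PP^3})\to\Ext^1_X(\Ii_C(2,2,2),\Oo_X)$ is an isomorphism. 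Hence $\Ee\cong\ell^\ast\Ff$, where $\Ff$ is the extension of $\Ii_T(2)$ by $\Oo_{\PP^3}$ corresponding to the class of $\Ee$; since $\Ee$ is locally free and $\ell$ is faithfully flat, $\Ff$ is a rank $2$ bundle on $\PP^3$ with $c_1(\Ff)=2$ and $c_2(\Ff)=2$.

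Set $\Gg:=\Ff(-1)$, so $c_1(\Gg)=0$ and $c_2(\Gg)=1$. Twisting $0\to\Oo_{\PP^3}\to\Ff\to\Ii_T(2)\to 0$ by $\Oo_{\PP^3}(-1)$ and by $\Oo_{\PP^3}(-2)$ and using $h^0(\Ii_T(1))=h^0(\Ii_T)=0$ (two skew lines span $\PP^3$) shows $h^0(\Gg)=h^0(\Gg(-1))=0$; thus $\Gg$ is a stable rank $2$ bundle on $\PP^3$ with $c_1=0$, $c_2=1$, which by the classical classification is a null-correlation bundle $\Nn_{\PP^3}$. Therefore $\Ee\cong\ell^\ast\Gg(1)=\ell^\ast(\Nn_{\PP^3}(1))$. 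Finally, knowing $\ell_*\Oo_X\cong\Oo_{\PP^3}\oplus\Oo_{\PP^3}(-1)^{\oplus 4}\oplus\Oo_{\PP^3}(-2)$, the projection formula and Bott's formulas for $\Nn_{\PP^3}$ give $h^0(\Ee)=h^0(\Nn_{\PP^3}(1))=5$ and $h^1(\Ee)=h^1(\Nn_{\PP^3}(-1))=1$. The main technical points are the $\Ext^1$ comparison in the third paragraph — which is what forces one to understand $\ell_*\Oo_X$ — and the bookkeeping of scheme structures in the identification $\ell^{-1}(T)=C$.
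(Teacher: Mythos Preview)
Your approach matches the paper's: construct $W=\langle C_1\rangle\cap\langle C_2\rangle$, project from it, and identify $\Ee$ with the pullback of a twisted null-correlation bundle. But you are considerably more careful on the central step. The paper simply asserts ``Since $\Ee$ is the unique bundle induced by $C$ using the Hartshorne-Serre correspondence, we have $\Ee\cong\ell^\ast(\Nn_{\PP^3}(1))$'', whereas you actually prove this by showing that the pullback map on $\Ext^1$ is an isomorphism (both sides $2$-dimensional, the map a split injection via the trace decomposition of $\ell_*\Oo_X$), so that every extension on $X$ descends to $\PP^3$, and then invoking the classification of stable rank-$2$ bundles with $(c_1,c_2)=(0,1)$. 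This is the right way to close the argument, and it also handles the non-uniqueness of the extension class cleanly.

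Two assertions you make deserve a one-line justification. First, ``any linear subspace $\Pi\subset\PP^7$ with $\dim\Pi\ge 4$ meets $X$'': if not, projection from $\Pi$ would give a morphism $X\to\PP^2$ pulling $\Oo(1)$ back to the ample $\Oo_X(1,1,1)$, which would then be trivial on the positive-dimensional fibres, a contradiction. Second, the splitting $\ell_*\Oo_X\cong\Oo\oplus\Oo(-1)^{\oplus 4}\oplus\Oo(-2)$: by K\"unneth $h^1(\Oo_X(k,k,k))=h^2(\Oo_X(k,k,k))=0$ for every $k\in\ZZ$, so $\ell_*\Oo_X$ has no intermediate cohomology and hence splits by Horrocks' criterion; the summands are then read off from the Hilbert function. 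With these two lines added, your computation $h^0(\Ee)=h^0(\Nn(1))=5$ and $h^1(\Ee)=h^1(\Nn(-1))=1$ via the projection formula is complete, and in fact more transparent than the paper's final sentence, which computes $h^0(\Ii_C(2,2,2))=h^0(\Ii_T(2))=4$ without explaining why the inequality $\ge$ is an equality.
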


\begin{proof}
Since the ``~if~'' part is obvious, it is sufficient to prove the other implication. Call $C =C_1\cup C_2 \subset X$ be the zero-locus of a general section of $\Ee$. Setting $M_i:= \langle C_i\rangle$ for $i=1,2$, we have $M_i\cap X = C_i$ as schemes by Lemma \ref{bo2}. Set $W:= M_1\cap M_2$.
Since $\dim (M_i) = 5$ for each  $i$, so the Grassmann formula gives $\dim (W) \ge 3$. Note that we have $M_1\cap M_2\cap X =\emptyset$ since $C_1\cap C_2 =\emptyset$.
Let $\ell _W: \PP^7\setminus W\to \PP^3$ be the linear projection from $W$. Set $\ell :={ \ell _W}_{\vert_X}$, $T_i:= \ell (C_i)$ and $T:= \ell ( C)$. $T$ is the disjoint union of two lines. Call $\Nn_{\PP^3}(1)$ the twisted null-correlation bundle associated to $T$. We have $\Ii _C(2,2,2) = \ell ^\ast (\Ii _{T,\PP^3}(2))$. Since $\Ee$ is the unique bundle induced by $C$ using the Hartshorne-Serre correspondence,
we have $\Ee \cong \ell ^\ast (\Nn_{\PP^3}(1))$. The last assertion follows from our proof, since we proved that any zero-locus $C$ is the pull-back of a (obviously unique) disjoint union $T\subset \PP^3$
of two lines and hence $h^0(\Ii _C(2,2,2)) = h^0(\PP^3,\Ii _T(2)) =4$.
\end{proof}



\providecommand{\bysame}{\leavevmode\hbox to3em{\hrulefill}\thinspace}
\providecommand{\MR}{\relax\ifhmode\unskip\space\fi MR }
\providecommand{\MRhref}[2]{%
  \href{http://www.ams.org/mathscinet-getitem?mr=#1}{#2}
}
\providecommand{\href}[2]{#2}

\end{document}